\theoremstyle{plain}
\newtheorem{theorem}{Theorem}[section]
\newtheorem{prop}[theorem]{Proposition}
\newtheorem{lemme}[theorem]{Lemma}
\newtheorem{corol}[theorem]{Corollary}
\theoremstyle{definition}
\newtheorem{hyp}[theorem]{Assumption}
\newtheorem{rque}[theorem]{Remark}
\title{Age-structured Trait Substitution Sequence Process and Canonical Equation}
\author{Sylvie Méléard$^\heartsuit$, Viet Chi Tran$^\sharp$\\
 {\footnotesize $^\heartsuit$ CMAP, Ecole Polytechnique, route de Saclay, 91128 Palaiseau Cedex, Sylvie.Meleard@polytechnique.edu}
\\
{\footnotesize $^\sharp$ Laboratoire Paul Painlevé, Université Lille 1, 59655 Villeneuve d'Ascq Cedex, Chi.Tran@math.univ-lille1.fr}}
\date{\today}
\numberwithin{equation}{section}
\newcommand{\Co}{\mathcal{C}}
\newcommand{\N}{\mathbb{N}}
\newcommand{\R}{\mathbb{R}}
\newcommand{\X}{\mathcal{X}}
\newcommand{\lbrac}{\left[\!\left[}
\newcommand{\rbrac}{\right]\!\right]}
\begin{document}

\maketitle

\begin{abstract}
We are interested in a stochastic model of trait and age-structured
population undergoing mutation and selection. We start with a continuous
time, discrete individual-centered population process. Taking the large
population and rare mutations limits under a well-chosen time-scale
separation condition, we obtain a jump process that generalizes the Trait Substitution
Sequence process describing Adaptive Dynamics for populations without age structure. Under the additional assumption of small mutations,  we derive an age-dependent ordinary differential equation that extends the Canonical Equation. These evolutionary approximations have never been introduced to our knowledge. They are based on ecological phenomena represented by PDEs that generalize the Gurtin-McCamy equation in Demography. Another particularity is that they involve a fitness function, describing the probability of invasion of the resident population by the mutant one, that can not always be computed explicitly. Examples illustrate how adding an age-structure enrich the modelling of structured population by including life history features such as senescence. In the cases considered, we establish the evolutionary approximations and study their long time behavior and the nature of their evolutionary singularities when computation is tractable. Numerical procedures and simulations are carried.
\end{abstract}

\noindent\textit{Keywords: }Age-structure, adaptive dynamics, mutation-selection, Trait Substitution Sequence, time scale separation, Canonical Equation, interacting particle systems.\\
\noindent\textit{AMS Subject Classification: }92D15, 60J80, 60K35, 60F99.

\section{Introduction}\label{sectionresultats}

\noindent \textit{Structured populations} are populations in which individuals
differ according to variables that affect their reproductive or
survival capacities. These variables  can be phenotypic, genotypic
or behaviorial \textit{traits} that are assumed to be hereditarily
transmitted from a
 parent to its descendants unless a mutation occurs. They can also be
 \textit{position}, \textit{sex} or \textit{age}. In this article, we
 are interested in modelling the adaptive evolution of  trait and age-structured
 populations,
describing the ability of a population to generate and select
trait diversity at an individual level and through the expression
of the individual ages and traits. Individual-centered models
allow us to give a realistic description of these phenomena
  and to obtain  macroscopic equations, at a large population scale.

\noindent As emphasized by Charlesworth \cite{charlesworth}, it is important
to take age structure into account.
    Most populations consist in sets of individuals born over a range of
     past times, with behaviors that depend on their
     ages. As mentioned in
     \cite{charlesworth},  there exist for many species a juvenile and an adult
periods. The first period is devoted to maturation and growth,
 and the individual starts reproducing only
 during the second period. See   the \textit{Tribolium} population (cf. \cite{henson}) as an illustration.
 For many species, and mammals in particular, fecundity and survival functions are
decreasing with age (cf. \cite{promislow}). Some other
species exhibit long postponement of reproduction. The pink salmon
\textit{Oncorhynchus Gorbuscha} breeds at about 2 years old, which
is approximatively its life expectation (cf. \cite{charlesworth, kingsbury}), and the cicada
\textit{Magicicada} reproduces and dies at 13-17 years old (\cite{charlesworth}) for instance.

\noindent Adding an age-structure allows us to study life history traits, such as age at maturity,
or to consider senescence phenomena, which describe the fixation of
deleterious genes or the decrease of reproductive and survival
capacities in the course of life. Many questions concerning these age-structured
populations can be raised. How does the age influence the trait evolution? For a given trait, how does the probability of fixation or elimination by natural selection depend on its age of expression? How does the age-structure of a population influence the selective pressure exerted on individuals? Which
age distributions will appear at equilibrium for a given trait?

\noindent In this paper, our main interest is to generalize to
age-structured populations
  the recent theory of \textit{Adaptive Dynamics}.
 First models of Adaptive Dynamics
  have been introduced by Hofbauer and Sigmund \cite{hofbauersigmund}, Metz \textit{et al.}
  \cite{metzgeritzmeszenajacobsheerwaarden}, Dieckmann and Law \cite{dieckmannlaw}
  and rigorous
   microscopic derivations have been obtained by Champagnat \cite{champagnat, champagnat3},
   Champagnat \textit{et al.} \cite{champagnatferrieremeleard, champagnatferrieremeleard2}.
   In these models, the population is only structured by the trait of interest.

\noindent We start with the description of a discrete stochastic individual-centered model
of population
 structured by trait and age. We consider approximations of the microscopic process
 under a large population
 asymptotic and obtain in the limit a deterministic partial
 differential equation (PDE) involving trait and age,
 that generalizes classical demographic PDEs
  (see McKendrick \cite{mckendrick}, Von Foerster \cite{vonfoerster}, Gurtin and McCamy
  \cite{gurtinmaccamy},
   Webb \cite{webb}, Murray \cite{murray}, Charlesworth \cite{charlesworth}, Thieme \cite{thieme}). This gives us information on the long time behavior of the population. We are then able to characterize rare mutation rates for which it
is possible, under an additional assumption of non-coexistence in
   the long term of two different traits, to separate the time scale of ecology describing the demographic variations in the population and the time
   scale of evolution linked to the occurrence of mutations. The latter hypothesis generalizes the "Invasion implies
   Fixation" assumption of the Adaptive Dynamics theory.
   We prove in this case that the microscopic process converges to
    the so-called \textit{Age-structured Trait Substitution Sequence Process} that jumps from
    a monomorphic
    equilibrium (where all the individuals carry the same trait) to another. This
     process extends to populations
     with age-structure the \textit{Trait Substitution Sequence} (TSS) introduced by Metz \textit{et al.}
     \cite{metzgeritzmeszenajacobsheerwaarden}. A difficulty in our case lies in the fact that the
     fitness function describing the invasion of a mutant trait in the resident population and which appears in the generator
      of the TSS process can not be computed explicitly. Taking the limit of the TSS when the mutation steps tend to zero gives us an age-dependent ordinary differential equation (ODE),
  generalizing the \textit{Canonical Equation}
   proposed by Dieckmann and Law \cite{dieckmannlaw}. We show that an explicit expression for the fitness gradient
    appearing in the ODE is available despite of the implicit definition of the fitness function for the  TSS.
     \noindent To our knowledge, the equations that we establish have not been introduced in the
 biological literature yet
 and are more general than the ones  previously proposed
 by Dieckmann \textit{et al.} \cite{dieckmannheinoparvinen}, Ernande \textit{et al.}
 \cite{ernandedieckmanheino},
 Metz \cite{metzencyclop}, Parvinen \textit{et al.}
 \cite{parvinendieckmannheino}.\\
\noindent Our results point out that in the adaptive dynamics limit, the age-structured TSS and Canonical Equation involve averaged functions in age. The age-structure does not mainly affect the qualitative behavior of the population but plays a role in the trade-offs determining the Evolutionary Stable Strategies (ESS). With age-structure, we gain in realism and refine the results by including effects due to life histories and age-dependent behaviors.\\

\noindent Sections \ref{sectionprocessusmicroevoltion} to \ref{sectionpreztss}
are devoted to the presentation of the three models of interest: the microscopic model,
the TSS model, and the Canonical Equation.
 Then, we state the limit theorems linking these models (Theorems \ref{theoremtss} and
 \ref{theoremequationcanoniquefoncionnelle}). Several examples are considered. in Section \ref{sectionexemple}, we begin with a logistic model with senescence in the birth rate and size dependence in the competition term. The long time behaviors of the TSS process and of the solution of the Canonical Equation are studied. We highlight differences with the case without age-structure. In our model, senescence acts as a penalization term that favors fast reproduction to growth. We then consider a model belonging to a class that we call \textit{age-logistic}. Computation becomes rapidly complicated. In Section \ref{sectionexemple2} we investigate an example where the competition kernel is nonlocal, asymmetric in trait (cf. Kisdi \cite{kisdi}) and decreasing in the age of the competitors. The explicit expression of the fitness gradient allows us to compute the ESS value and to show that coexistence is possible. The existence of an "evolutionary branching" is suggested by numerical simulations.

\section{Microscopic model}\label{sectionprocessusmicroevoltion}

We start with a stochastic microscopic model of age and trait
structured asexual population describing the
dynamics at the individual level. We take into account births, either clonal
or with mutation and death, natural or due to the competition with
other individuals. A large population limit is studied. We
provide by this way a microscopic justification of PDEs generalizing models
introduced in Demography (\cite{charlesworth, mckendrick, murray, thieme, webb}). This gives us a better
understanding of the large time behavior of the microscopic process and
allows us to define the rare mutations asymptotics leading to Adaptive
Dynamics limits. To our knowledge, there are only a few models
dealing with trait and age-structured populations, all deterministic
(Rotenberg \cite{rotenberg},
 Mischler \textit{et al.} \cite{mischlerperthameryzhik}) and the stochastic models taking age
  into account do not consider trait evolution, see Kendall \cite{kendall}, Athreya and Ney \cite{athreyaney},
  Doney \cite{doney},
   Oelschläger \cite{oelschlager}, Tran \cite{trangdesdev}.
    We generalize all these models by introducing a
   dependence between
  trait and age at the individual level and by
   taking into account mutation and
   competition between individuals, which yields nonlinearity in
   the limiting phenomena.

\subsection{Microscopic description}\label{sectiondescription}

Individuals are characterized by a trait $x$ belonging to a
compact set $\X$ of $\R^{d}$ and by their physical age $a\in
\R_+$. We set $\widetilde{\X}:=\X\times \R_+.$
 The population is discrete and described by the point measure
  \begin{equation}
  Z_t(dx,da)=\sum_{i=1}^{\langle Z_t,1\rangle}\delta_{(x_i(t),a_i(t))}.\label{defz}
  \end{equation}
  Each individual is represented by a Dirac mass on its trait and age,
   and $\langle  Z_t,1\rangle $  is  the population size at time $t$.
We denote by $\mathcal{M}_F(\widetilde{\X})$ the set of finite
measures on $\widetilde{\X}$. For $Z\in \mathcal{M}_F(\widetilde{\X})$ and for
a real-valued bounded measurable function  $f$,
 we define $\langle Z,f\rangle=\int_{\widetilde{\X}}fdZ$.

\noindent
 An individual with trait $x\in \X$
and age $a\in \R_+$ in the population described by $\ Z$
 gives birth to a new individual with rate $b(x,a)$.
 With probability $p\in [0,1]$ the new individual is a mutant of age 0 and trait $x+h$, where $h$ is drawn
 from a probability distribution $\ k(x,h)\, dh$ with support on $\X-\{x\}=\{y-x\,|\,y\in \X\}$ (so that $x+h\in \X$).
  With probability $1-p\in [0,1]$, the new individual is clonal, with age 0 and trait $x$.
The individual dies with rate $d(x,a)+ Z U(x,a)$ where
$d(x,a)$ is the natural death rate and $Z
U(x,a)=\int_{\widetilde{\X}}U((x,a),(y,\alpha))Z(dy,d\alpha)$.
Here, $U$ is a real-valued kernel describing the interaction
 exerted by an individual with trait $y$ and age $\alpha$ on an individual with trait $x$
 and age $a$. During their life, individuals age with
velocity 1, so that  the age at time $t\in \R_+$ of an individual
born at time $c\leq t$ is $a=t-c$. (Notice that the birth date $c$
varies from an individual to another).

\noindent Let us now describe the generator of the
$\mathcal{M}_F(\widetilde{\X})$-valued Markov process $Z$. This
generator $\ L\ $ sums the aging phenomenon and  the ecological
dynamics of the population. Let
$\mathcal{C}_b^{0,1}(\widetilde{\mathcal{X}},\R)$
  be the space of continuous bounded real-valued functions on $\widetilde{\X}$
 with bounded continuous derivatives with respect to the age variable.
  As developed in Dawson \cite{dawson}
Theorem 3.2.6, the set of cylindrical functions defined for each
$\mu\in \mathcal{M}_F(\widetilde{\X})$ by $F_f(\mu)=F(\langle
\mu,f\rangle)$, with $F\in C^1_b(\R)$ and $f\in
C^{0,1}_b(\widetilde{\X})$, generates the set of bounded
measurable functions on $\mathcal{M}_F(\widetilde{\X})$. For such function,
\begin{multline}
  LF_f(\mu)=
   \langle \mu,  \partial_a f(.)\rangle F'_f(\mu)\\
+
 \int_{\widetilde{\X}} \left[(d(x,a)+\mu U(x,a))
  (F_f(\mu-\delta_{(x,a)})-F_f(\mu))+ b(x,a)(1-p)
  (F_f(\mu+\delta_{(x,0)})-F_f(\mu))\right.\\
  +  \left. b(x,a)p
 \int_{\R^d}(F_f(\mu+\delta_{(x+h,0)})-F_f(\mu))k(x,h)dh\right]\mu(dx,da).
\end{multline}

\noindent In the sequel, we make the following assumption.
\begin{hyp}\label{hypothesetaux} {\it The functions $\ b$, $d$ and $U$
are assumed to be of class $\Co^1$ and $x\mapsto k(x,h)$ is
Lipschitz continuous, uniformly in $h$. We suppose that
 there exist strictly positive constants $\bar{b},\,\underline{d},\,\bar{d},\,\bar{U},\,\underline{U}$
  such that $\forall (x,a)\in\widetilde{\X}$,
 $\forall (y,\alpha)\in \widetilde{\X}$,
$$0  \leq  b(x,a)  \leq    \bar{b}, \quad
\underline{d}  \leq d(x,a)  \leq \bar{d}, \quad\underline{U}
 \leq   U((x,a),(y,\alpha))  \leq  \bar{U},
\quad \int_{\mathbb{R}^d} (\sup_x k(x,h))dh  <+\infty.$$}
\end{hyp}

\bigskip

\noindent Under Assumption \ref{hypothesetaux}, the process $\ Z$
can be obtained as the unique strong solution of a stochastic
differential equation driven by a multivariate Poisson point
measure corresponding to the dynamics described above (we can
adapt Fournier Méléard \cite{fourniermeleard} and Tran
\cite{trangdesdev} concerning respectively trait-structured and
age-structured cases).

\bigskip
\noindent Let us mention different forms for the birth rate that can be found in the literature. The period devoted to
reproduction is very different
 from one specie to another.
 For some
species, there is no reproduction in the first
  growth period (as juvenile or larva states, cf. Henson
  \cite{henson}).
 The parameters of interest are the maturity age $a_M$ and the constant  birth rate $b_1$,
 which are
 the heritable traits. The birth rate for an individual with traits $(a_M,b_1)$
 and age $a$ can be $b(a_M,b_1,a)= b_1 {\bf 1}_{\{a\geq a_M\}}.$\\
\noindent  Many other species reproduce early in life, and the reproduction function is decreasing with age. The heritable traits
   submitted to mutation are then  the  initial
  reproduction
  rate $b_1$ and the senescence parameter $b_2$. As in (cf. Webb \cite{webb} p.39 Ex. 2.1),
    the birth rate of an
  individual with traits $(b_1,b_2)$ and age $a$
  can be $b(b_1,b_2,a)=b_1 e^{- b_2 a}.$\\
\noindent Conversely, for other species (\cite{webb} p.41 Ex. 2.2), reproduction happens at the end of life and the birth rate
of an individual with age $a$ can be modelled as
$b(b_1,b_2,a)=b_1 (1 - e^{- b_2 a}).$ The individual trait will be $(b_1,b_2)$, with $\ b_1$ the
reproduction rate at maturation and $b_2$ the maturation
parameter.

\medskip
\noindent
 The competition kernel $\ U$ describes the
competition intensity between two individuals and can depend on
their respective ages and traits. The simplest case is the
(density dependent) {\it logistic} one, for which $\
U((x,a),(y,\alpha))=\eta(x,a)$. The interaction exerted on an
individual is then proportional to the total number of individuals
in the population, and thus nonlocal. Another interesting case is
when $U((x,a),(y,\alpha))=\eta(x,a)U(x,y)$.
Then
 interaction is nonlocal in age and local in the trait space.
  Such population will be called {\it age-logistic}.

\bigskip
\noindent In Sections 5 and 6, we develop examples of asexual age and size-structured populations. We first consider a logistic population with senescence in the birth
rate. Computation can be developed in details. We compare it with
the corresponding model without senescence and with a model with
age-logistic interaction.  In the last model, we
investigate a case for which the competition kernel is local in
trait and age.

\medskip
\noindent In these examples, the trait space is the size space
$[0,4]$ and the size trait is heritable except  when a mutation
occurs. With probability $p\in ]0,1[$, the descendant of an
individual with trait $x$ is a mutant with trait $x+h$. Mutation
amplitude $h$ is distributed on $[-x,4-x]$ following the centered
law $k(x,h)dh$, so that $x+h$ remains in $[0,4]$. With probability
$1-p$, the descendant is clonal.

\bigskip
\noindent {\bf Example 1:}
 The individuals reproduce at
rate
\begin{equation}
b(x,a)=x(4-x)e^{-a}.\label{tauxnaissanceexemple}
\end{equation}
Dependence in $x$ explains as follows: if the individual is small,
it has not enough energy
 to reproduce with high rate. If
 it is large, descendants' size will also be large by  hereditary
 transmission,
 and more energy is required to produce such descendants than smaller ones.
 The term $e^{- a}$ expresses a senescence phenomenon and the reproduction rate is  decreasing
 with age.
The logistic death
  rate of an individual with trait $x$ is given whatever its age  by
\begin{equation}
d(x,\langle Z,1\rangle)=\frac{1}{4}+0.001\cdot(4-x)\langle
Z,1\rangle.\label{tauxmortexemple}
\end{equation}The term 1/4 is the natural death rate and  the
logistic  term has intensity $0.001 (4-x)$, meaning that big
individuals are less sensitive to competition. The trait
maximizing the birth rate is $x=2$. Since the logistic
competition favors individuals with size close to  4, we can
expect that the optimal traits, taking birth and death rates into
account, are lying in the interval [2,4].

\noindent In Section 5,  computation will allow us to obtain the
value  of the trait equilibrium (evolutionary stable strategy) in
which the population will stabilize in the evolution time scale.
We will compare this model with the similar  one with
$b(x)=x(4-x)$, (a size-structured population without age
structure) and will show that the senescence phenomenon appears as
a penalization factor, reducing the equilibrium trait value.
We also compare the first model with the one in which the logistic interaction term $0.001(4-x)\langle Z,1\rangle$ is replaced by
\begin{equation}a\int_{\widetilde{\X}}U(x,y)Z(dy,d\alpha),\quad \mbox{ where }\quad  U(x,y)= C\left(1-\frac{1}
{1+\nu\exp\left(-k(x-y)\right)}\right)\label{kisdi}\end{equation} is
Kisdi's asymmetric competition function \cite{kisdi} ($C=0.002$, $\nu=1.2$ and $k=4$). Its sigmoïd
shape models the fact that the competition on an individual is
mainly exerted by larger ones. We
see how intricate computation can be.\\

\begin{figure}[!ht]
\hspace{-0.5cm}\begin{tabular}[!ht]{ccc}
(a) & (b) & (c)\\
\begin{minipage}[b]{.33\textwidth}\centering
\vspace{0.3cm}
\includegraphics[width=0.8\textwidth,height=0.20\textheight,angle=0,trim=1cm 2cm 1cm 1cm]{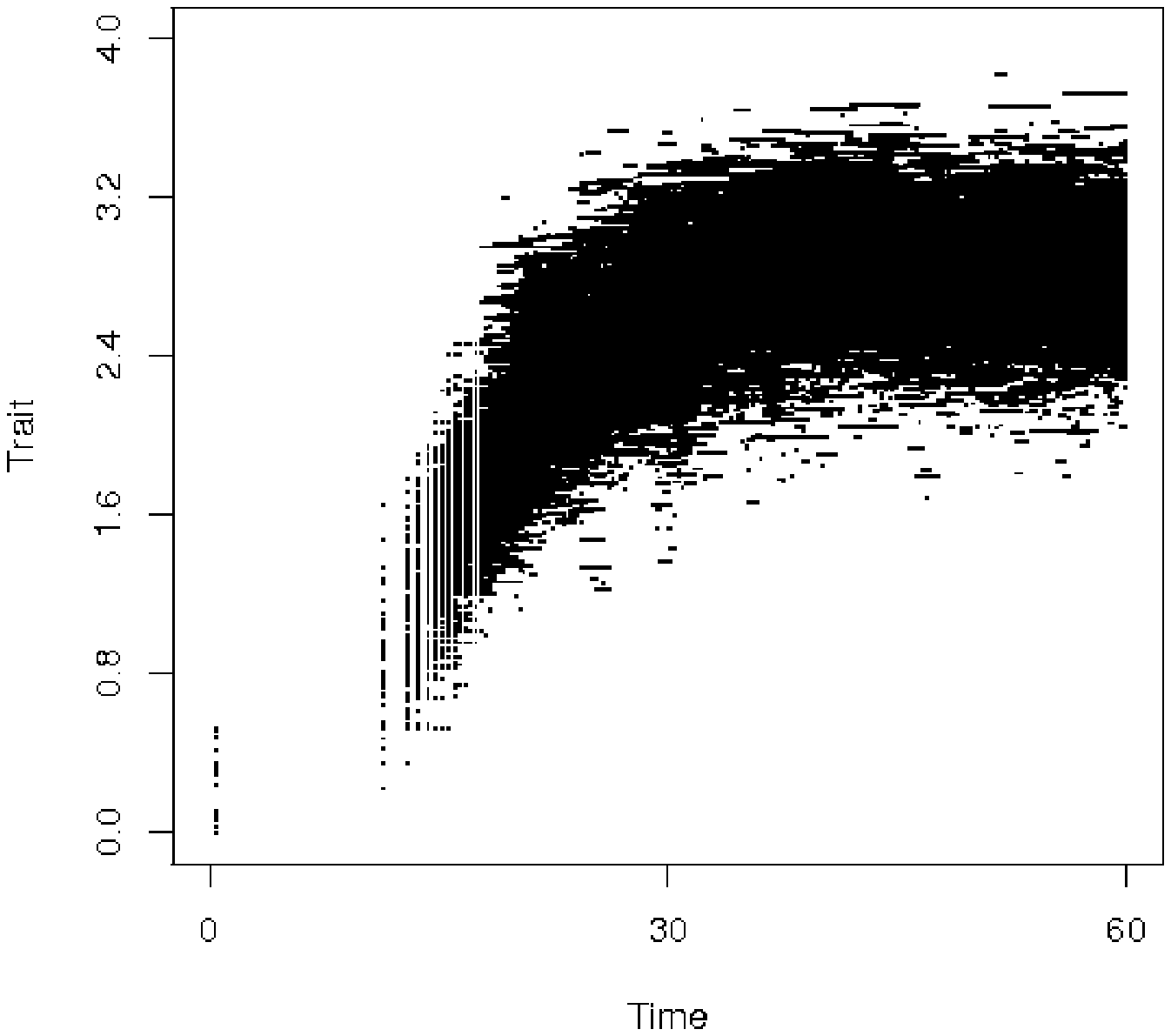}
\end{minipage}
 &
\begin{minipage}[b]{.33\textwidth}\centering
\vspace{0.3cm}
\includegraphics[width=0.8\textwidth,height=0.20\textheight,angle=0,trim=1cm 2cm 1cm 1cm]{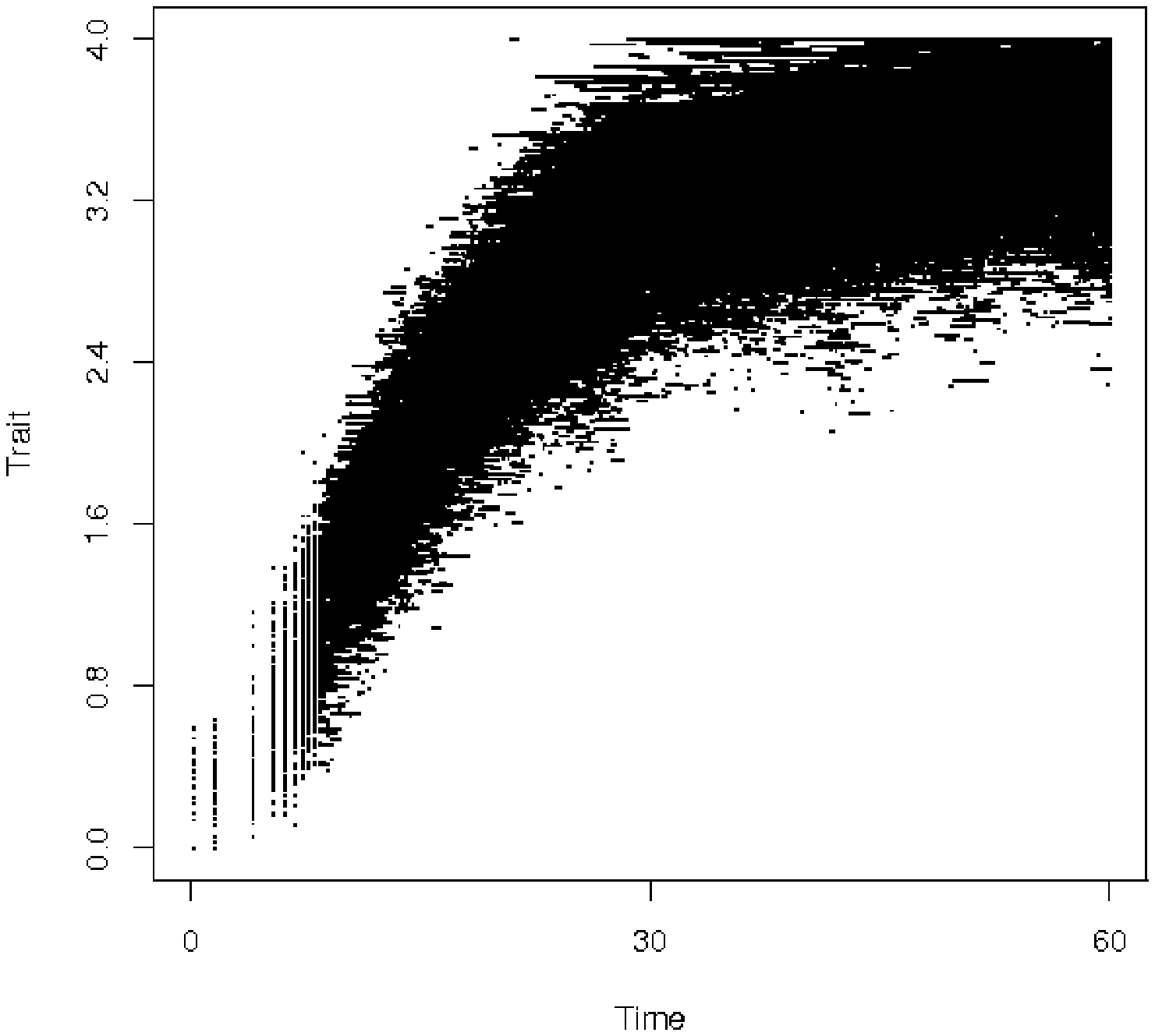}
\end{minipage}
 &
\begin{minipage}[b]{.33\textwidth}\centering
\vspace{0.3cm}
\includegraphics[width=0.8\textwidth,height=0.20\textheight,angle=0,trim=1cm 2cm 1cm 1cm]{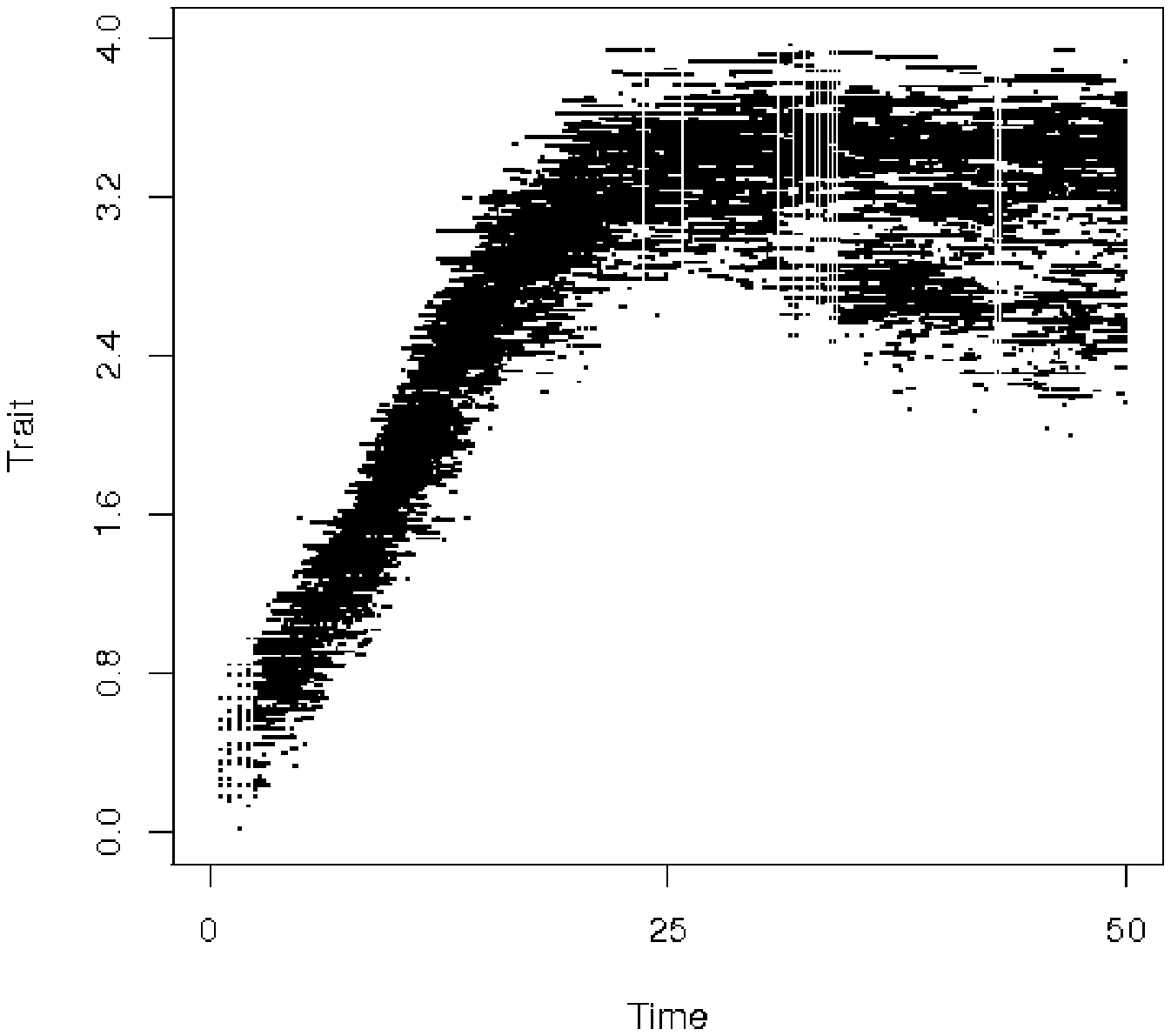}
\end{minipage}
\end{tabular}
\caption{\textit{Simulation of the microscopic process, using an individual-based algorithm: the traits in the population are represented in function of time. We start with
 $n=2000$ individuals. The initial traits are drawn uniformly between 0 and 1.3, and the ages are drawn in an exponential distribution of parameter 2. For the mutations, $p=0.13$ and $k$ is the gaussian
law with mean $0$ and variance $0.15$, conditioned to stay in
$[0,4]$. (a): Logistic population with age structure (\ref{tauxnaissanceexemple}, \ref{tauxmortexemple}). (b): Logistic population without age structure and (\ref{tauxmortexemple}). (c): Age-structured population with nonlocal interaction kernel (Example 2).}}\label{figmicrosc}
\end{figure}

\noindent {\bf Example 2:} We investigate a model where the competition
kernel is a function of the traits and ages of the competitors. An individual with trait $x$
and age $a$
  reproduces at rate $b(x)=x(4-x).$
 There is no natural death  and the action
exerted by a competitor of trait $y$ and age $\alpha$   is
described by  the interaction  function $\eta(a)
\left(1+e^{-\alpha}\right) U(x,y)$ of {\it separable
multiplicative form}, with $U$ defined in \eqref{kisdi}.   The
term  $\ 1+e^{-\alpha}\ $ is a senescence parameter diminishing
 the interaction intensity with the competitor's
age. When $\eta$ is a constant function, we recover models studied
by \cite{busenbergiannelli,perthameryzhik} with a deterministic
approach.
 In Section 6, we investigate the case
$ \eta(a)=a$. \\

\noindent Simulations using an individual-based algorithm are given in Figure \ref{figmicrosc} (c). They present the trait-support of the point measure Z defined in (\ref{defz}) for the three examples previously described. We remark a qualitative difference between cases (a) and (b) and case (c). In the two first plots, the trait-support is very dense. Conversely, in Figure \ref{figmicrosc} (c), the population separates into
smaller groups once it has reached a certain trait value which
seems to be approximatively 3.2. This phenomenon is known as
\textit{branching} in the biological literature.

\subsection{Large population and rare mutation renormalizations}

We are interested in studying approximations of the age-trait
dynamics presented in Section \ref{sectiondescription} under a large initial population and  rare mutation
assumption.

\medskip
\noindent The processes of interest are the renormalized population
processes $(Z^n)_{n\in \N^*}$
  given by:
\begin{equation}\label{processusmicrorenorm}\forall n\in \N^*,\,\forall t\in \R_+,\,
Z^n_t(dx,da)=\frac{1}{n}\sum_{i=1}^{n\langle
Z^n_t,1\rangle}\delta_{(x_i(t),a_i(t))}.
\end{equation}
The assumptions are as follows.
 \medskip\noindent
\begin{hyp}\label{hypconditioninitcvedp}{\it
(i) $\exists \varepsilon>0,\quad\sup_{n\in
\N^*}\mathbb{E}\left(\langle
Z^n_0,1\rangle^{2+\varepsilon}\right)< +\infty$.
\\
(ii) The sequence $Z^n_0$ converges in law in
$\mathcal{M}_F(\widetilde{\X})$ to a finite measure $\xi_0$.\\
(iii) The  ecological
 parameters stay unchanged, except the interaction kernel $U_n={U\over n}$. This
 hypothesis corresponds heuristically to the expression of
 a resource constraint. The death rate of an individual $(x,a)$ of $Z^n_t$
  is thus equal to
  $d(x,a)+ Z^n_t U(x,a)$.
 \\
 (iv) The mutation probability is given by $u_n p$, with $u_n,\ p \in
[0,1]$ and $\ u_n$ is assumed to decrease to $0$ when $n \to
+\infty$.}
\end{hyp}

\bigskip




  \noindent Adapting
tightness-compactness results in Fournier and Méléard
\cite{fourniermeleard}, one can prove that  the sequence $(Z^n)_{n\in \N^*}$ converges to a deterministic limit  when $n\to
+\infty$. More precisely:

\begin{prop}\label{prop26}  (cf. Tran \cite{chithese} (Section 3.2)) Under Assumptions \ref{hypothesetaux} and
\ref{hypconditioninitcvedp}, the sequence  $(Z^n)_{n\in \N^*}$
converges in $\mathbb{D}(\R_+,\mathcal{M}_F(\widetilde{\X}))$ to a
deterministic continuous process $\xi\in
\Co(\R_+,\mathcal{M}_F(\widetilde{\X}))$ characterized as the
unique solution of the evolution equation: $\forall f\in
\Co_b^{0,1}(\widetilde{\X},\R)$, $\forall t\in \R_+$,
\begin{multline}
\langle \xi_t,f\rangle= \langle \xi_0,f\rangle +   \int_0^t \int_{\widetilde{\X}}
\left[\partial_{a} f(x,a)+ f(x,0) b(x,a)
-  f(x,a)(d(x,a)+\xi_s U(x,a))\right]\xi_s(dx,da)\, ds,\label{formefaible}
\end{multline}
\end{prop}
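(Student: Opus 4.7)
The plan is to follow the standard semimartingale approach for interacting particle systems, adapted to handle the extra age variable (which introduces a transport term) and the nonlinear competition term. The reference to Tran's thesis and to Fournier--Méléard suggests that the scheme is: (i) obtain uniform moment bounds on $\langle Z^n_t,1\rangle$, (ii) write a semimartingale decomposition, (iii) prove tightness, (iv) show the martingale part vanishes, and (v) identify and uniquely characterize the limit.

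First I would establish the uniform bound
$\sup_n \mathbb{E}[\sup_{t\leq T}\langle Z^n_t,1\rangle^{2+\varepsilon}] < +\infty$
for every $T$. Since the per-capita birth rate is bounded by $\bar b$ and the per-capita death rate is nonnegative, $\langle Z^n_t,1\rangle$ is dominated by a pure birth process with rate $\bar b$; combined with Assumption 2.2(i), Gronwall yields the bound. Next I would use the SDE representation driven by a Poisson point measure (as in Fournier--Méléard and Tran) together with Itô's formula for jump processes applied to $F_f(Z^n_t)$ with $F(x)=x$, to obtain the decomposition
\begin{align*}
\langle Z^n_t,f\rangle = \langle Z^n_0,f\rangle + \int_0^t\!\!\int_{\widetilde{\X}}\!\Big[&\partial_a f(x,a)+ b(x,a)\big((1-u_np)f(x,0)+u_np\!\int f(x+h,0)k(x,h)dh\big)\\
&-f(x,a)(d(x,a)+Z^n_s U(x,a))\Big]Z^n_s(dx,da)\,ds + M^n_t(f),
\end{align*}
where $M^n(f)$ is a càdlàg square-integrable martingale. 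The $\partial_a f$ term arises from the deterministic transport of ages between jumps.

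Third, I would show that $M^n(f)$ vanishes in $L^2$. Its predictable quadratic variation is of order $\tfrac{1}{n}\int_0^t(\bar b+\bar d+\bar U\langle Z^n_s,1\rangle)\langle Z^n_s,f^2\rangle\, ds$, hence $O(1/n)$ by the moment bound, so Doob's inequality gives $\sup_{t\leq T}|M^n_t(f)|\to 0$ in $L^2$. Since $u_n\to 0$, the mutation contribution in the drift also disappears, leaving only the clonal birth term $f(x,0)b(x,a)$ in the limit. For tightness of $(Z^n)$ in $\mathbb{D}(\mathbb{R}_+,\mathcal{M}_F(\widetilde{\X}))$ I would invoke the Aldous--Rebolledo criterion applied to $\langle Z^n,f\rangle$ for $f$ ranging over a dense subset of $\Co^{0,1}_b$, controlling the finite variation part and the bracket of the martingale using the moment bound and Assumption 2.1. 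Any limit point $\xi$ is then continuous (since jumps are of size $1/n$) and, by passing to the limit in the decomposition, satisfies \eqref{formefaible}.

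The main obstacle is uniqueness of solutions to \eqref{formefaible}, because of the nonlinear competition $\xi_s U(x,a)$ and the transport in age. I would adapt the Fournier--Méléard / Tran strategy: given two solutions $\xi^1,\xi^2$ with the same initial datum, I would test against suitable functions $f$ (or work with the total variation norm on $\M_F(\widetilde{\X})$) and exploit the fact that the total mass remains bounded on $[0,T]$ (via a simple Gronwall using Assumption 2.1, $\bar b, \underline U$) to obtain an inequality of the form $\|\xi^1_t-\xi^2_t\|_{TV}\leq C_T\int_0^t\|\xi^1_s-\xi^2_s\|_{TV}\,ds$, and conclude by Gronwall. The delicate point is handling the $\partial_a f$ term in the weak formulation; this is done by restricting to test functions of the product form $f(x,a)=\varphi(x)\psi(a)$ with $\psi\in\Co^1_b$ and using an approximation argument to recover the total variation norm.
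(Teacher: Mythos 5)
Your proposal follows essentially the same route as the paper, which only sketches the argument and defers details to Tran's thesis and Fournier--M\'el\'eard: semimartingale decomposition with a martingale part of order $1/n$, disappearance of the mutation term since $u_n\to 0$, tightness and continuity of limit points, and uniqueness of the limiting equation via a Gronwall estimate. The one step you flag as delicate (absorbing the $\partial_a f$ transport term in the uniqueness argument) is indeed the only nonstandard point relative to the trait-only case, and the standard resolution---using test functions transported along characteristics, $f(x,a+t-s)$, so the age derivative cancels before passing to the total variation norm---is what the cited references do; your product-function approximation would need to be developed in that direction.
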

\noindent Let us roughly give the main ideas of the proof. Since
every individual is weighted by $1/n$, the size of population
jumps converges to zero, and the limiting values of $(Z^n)_{n\in \N^*}$ are
continuous processes. Moreover, for $f\in \Co_b^{0,1}(\widetilde{\X},\R)$, $\langle Z^n_t,f\rangle$ writes as the sum of a
finite variation term and a random term whose square is of order
$1/n$. The random part
disappears when $n\to +\infty$ and the limit is deterministic. In
the finite variation part, mutations also disappear in the limit,
since $(u_n)_{n\in \N^*}$ tend to $0$, leading to \eqref{formefaible}. No
diversity appears since there is no mutation. Uniqueness of the solution of (\ref{formefaible}) implies uniqueness of the limiting value of $(Z^n)_{n\in \N^*}$.

\medskip
\noindent  In cases where the support of the trait-marginal of
$\xi_0$ is a singleton $\{x_0\}$, respectively a pair $\{x_0,y\}$, the population remains monomorphic, respectively dimorphic. Equation
\eqref{formefaible} is then parameterized by these values and
relations between (\ref{formefaible}) and classical partial
differential equations have been studied by Tran
(\cite{trangdesdev}
 Propositions 3.4 and 3.6).

\begin{prop}\label{propconvergence1chap4} (i) Assume that  $\xi_0(dx,da)=\delta_{x_0}(dx)
m_0(x_0,a)da$. Then for every $t\geq 0$,
$\xi_t(dx,da)=\delta_{x_0}(dx)m(x_0,a,t)da\ $  and the function $\
m(x_0,a,t)\ $  is the unique weak
 function solution of the  partial differential equation
 parameterized by $x_0$:
\begin{align}\frac{\partial m}{\partial t}(x_0,a,t) +& \frac{\partial m}{\partial a}
(x_0,a,t) = - \left(d(x_0,a)+\int_{\R_+} U((x_0,a),(x_0,\alpha))
m(x_0,\alpha,t)d\alpha\right)
m(x_0,a,t),\nonumber\\
m(x_0,0,t) = & \int_0^{+\infty} b(x_0,a)m(x_0,a,t)da,\quad m(x_0,a,0)=m_0(x_0,a).\label{pde2}
\end{align}
(ii) Assume that  $\ \xi_0(dx,da)=\delta_{x_0}(dx) m_0(x_0,a)da
+\delta_{y}(dx) m_0(y,a)da$. Then for every $t\geq 0$,
  $$\xi_t(dx,da)=\delta_{x_0}(dx) m(x_0,a,t)da
+\delta_{y}(dx) m(y,a,t)da$$ where $\ (m(x_0,a,t),m(y,a,t))\ $ is
the unique weak solution of
\begin{align} & \frac{\partial m}{\partial t}(x_0,a,t) + \frac{\partial m}{\partial a}
(x_0,a,t) = \nonumber\\
 & -   \left(d(x_0,a)+\int_{\R_+} \left(U((x_0,a),(x_0,\alpha))
m(x_0,\alpha,t)+U((x_0,a),(y,\alpha))
m(y,\alpha,t)\right)d\alpha\right)
m(x_0,a,t),\nonumber \\
 & \frac{\partial m}{\partial t}(y,a,t) + \frac{\partial m}{\partial
a} (y,a,t) = \nonumber\\
& - \left(d(y,a)+\int_{\R_+} \left(U((y,a),(x_0,\alpha))
m(x_0,\alpha,t)+U((y,a),(y,\alpha))
m(y,\alpha,t)\right)d\alpha\right)
m(y,a,t),\nonumber \\
 & m(x_0,0,t) =  \int_0^{+\infty} b(x_0,a)m(x_0,a,t)da,\quad
m(y,0,t) =  \int_0^{+\infty} b(y,a)m(y,a,t)da, \nonumber\\
 & m(x_0,a,0)= m_0(x_0,a),\quad m(y,a,0)=m_0(y,a).\label{pde3}
 \end{align}
\end{prop}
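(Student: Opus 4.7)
The strategy for both parts is to reduce to Proposition \ref{prop26} by exhibiting a measure-valued process concentrated on the initial trait(s) that satisfies the weak equation \eqref{formefaible}, and then invoking the uniqueness statement of that proposition. No new probabilistic content is needed; the work lies in the associated deterministic PDE.

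For part (i), the first step is to establish existence of a nonnegative weak solution $m(x_0,a,t)$ to the generalized Gurtin--McCamy equation \eqref{pde2}. I would use the method of characteristics along the lines $a-t=\mathrm{const}$: on each characteristic the PDE reduces to an ODE driven by the boundary function $t\mapsto m(x_0,0,t)$, which is itself coupled back to $m$ through the renewal boundary condition $m(x_0,0,t)=\int b(x_0,a)m(x_0,a,t)\,da$ and the nonlocal competition term $\int U((x_0,a),(x_0,\alpha))m(x_0,\alpha,t)\,d\alpha$. A Banach fixed point argument on a short time interval in a space of bounded continuous functions of $(a,t)$, using the boundedness and Lipschitz continuity of $b$, $d$ and $U$ from Assumption \ref{hypothesetaux}, yields local existence and uniqueness; global existence follows because $\int_{\R_+} m(x_0,a,t)\,da$ satisfies a linear Gronwall bound coming from $b\leq\bar b$ and $d\geq 0$.

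The second step is to set $\tilde\xi_t(dx,da):=\delta_{x_0}(dx)\,m(x_0,a,t)\,da$ and verify it solves \eqref{formefaible}. For $f\in\Co_b^{0,1}(\widetilde\X,\R)$, I would multiply \eqref{pde2} by $f(x_0,a)$ and integrate in $a$; integration by parts on the $\partial_a m$ term produces a boundary contribution $-f(x_0,0)\,m(x_0,0,t)$ which, by the renewal boundary condition, exactly cancels the $f(x,0)b(x,a)$ term of \eqref{formefaible}, once one notes that $\tilde\xi_s U(x_0,a)=\int U((x_0,a),(x_0,\alpha))m(x_0,\alpha,s)\,d\alpha$ because the trait marginal of $\tilde\xi_s$ is the Dirac mass at $x_0$. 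Integrating in $t$ produces \eqref{formefaible} for $\tilde\xi$, and Proposition \ref{prop26} forces $\xi=\tilde\xi$. Part (ii) follows the same template with the ansatz $\tilde\xi_t(dx,da)=\delta_{x_0}(dx)\,m(x_0,a,t)\,da+\delta_y(dx)\,m(y,a,t)\,da$: the trait marginal of $\tilde\xi_s$ is now supported on $\{x_0,y\}$, so the nonlocal term $\tilde\xi_s U(x,a)$ splits into the two contributions appearing on the right-hand side of \eqref{pde3}, and the same characteristic/fixed-point scheme applied to the coupled pair $(m(x_0,\cdot,\cdot),m(y,\cdot,\cdot))$ delivers well-posedness; the weak identification then proceeds exactly as in part (i) component by component.

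The main obstacle is the PDE well-posedness step: one needs a functional setting in which the nonlocal nonlinearity $\int U\,m$ is controlled and in which the integration by parts underlying the weak (or mild) formulation is justified, and one must check that the fixed-point contraction constant can be made global thanks to the uniform bounds on $b$, $d$, $U$. Once this is in hand, the identification $\xi=\tilde\xi$ is an essentially algebraic computation relying on the Dirac structure of the trait marginal. This is precisely the content of Propositions 3.4 and 3.6 of \cite{trangdesdev} invoked by the authors.
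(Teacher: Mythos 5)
Your proposal is correct and follows essentially the route the paper intends: the paper itself gives no proof of this proposition but delegates it to Propositions 3.4 and 3.6 of \cite{trangdesdev}, whose content is exactly your two-step scheme --- well-posedness of the (coupled) Gurtin--McCamy system via characteristics and a fixed point, followed by verification that the Dirac-in-trait ansatz solves \eqref{formefaible} and identification through the uniqueness statement of Proposition \ref{prop26}. The only point deserving explicit care is that the uniqueness class produced by your fixed-point argument (bounded continuous functions along characteristics) must be matched with the notion of ``weak function solution'' in the statement, which is precisely the integrated-against-test-functions identity your integration by parts produces, so the two formulations coincide.
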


\medskip \noindent
Equation \eqref{pde2}  generalizes Demography equations, as
 McKendrick-Von Foerster or Gurtin-McCamy Equations
  (see \cite{mckendrick, vonfoerster, gurtinmaccamy}).

    \bigskip
\noindent In the sequel, we will make the following assumptions
concerning the  long time behavior of the solutions of
\eqref{pde2} and \eqref{pde3}.

\begin{hyp}\label{hypsolutionstationnaire}

 {\it  Let $x_0 \in \X$ and $m_0(x_0,a)da\in
\mathcal{M}_F(\mathbb{R}_+)$. The solution of (\ref{pde2}) admits
a unique nontrivial stable stationary solution
$\widehat{m}(x_0,a)$ such that
 $m(x_0,a,t)da$ converges  for the  weak convergence topology in
 $\mathcal{M}_F(
 {\mathbb{R}_+})$ to
 $\widehat{m}(x_0,a)da$ when $t\rightarrow +\infty$.
We will denote by
\begin{equation} \label{mchap}\widehat{M}_{x_0}=\int_0^{+\infty}\widehat{m}(x_0,a)da\end{equation}
the mass of the stationary age measure and by $\widehat{\xi}_{x_0}$
the space-time stationary measure
$\widehat{\xi}_{x_0}(dx,da)=\delta_{x_0}(dx)\widehat{m}(x_0,a)da$.}
\end{hyp}

\bigskip
\noindent
 Under Assumption
\ref{hypsolutionstationnaire}, $\widehat{m}(x_0,a)$ is solution of
\begin{equation}{\partial \widehat{m}\over \partial
a}(x_0,a)= - \widehat{d}(x_0,a,x_0)\widehat{m}(x_0,a),\quad \widehat{d}(x_0,a,x_0):=d(x_0,a)+\int_{R_+}U((x_0,a),(x_0,\alpha))
\widehat{m}(x_0,\alpha)d\alpha,\label{solstationnaire}
\end{equation}with the boundary condition:
\begin{equation}
\widehat{m}(x_0,0)=\int_0^{+\infty}
b(x_0,a)\widehat{m}(x_0,a)da.\label{boundarycondition}\end{equation}Solutions to (\ref{solstationnaire}) have the form
\begin{equation}
\widehat{m}(x_0,a)=\widehat{m}(x_0,0)\exp\left(-\int_0^a \widehat{d}(x_0,\alpha,x_0)d\alpha\right).\label{formesolutionsstationnaires}
\end{equation}From (\ref{formesolutionsstationnaires}) and (\ref{boundarycondition}), we obtain the following balance condition
\begin{equation}\int_0^{+\infty}b(x_0,a)e^{-\int_0^{+\infty}
\widehat{d}(x_0,\alpha,x_0)d\alpha}da=1.\label{eqbalance}\end{equation}

\noindent A necessary condition to get Assumption
\ref{hypsolutionstationnaire} is then that
\begin{equation}
R_0(x_0):=\int_0^{+\infty}b(x_0,a)e^{-\int_0^a
d(x_0,\alpha)d\alpha}da>1.\label{cssurvie}
\end{equation}

\noindent The term $R_0(x_0)$, called  net reproduction rate, is
the integral in age of the birth rate weighted by the survival
probability in absence of competition. It's  the well known
threshold between  sub and super-criticality in age-structured
models (see \cite{athreyaney, doney, gurtinmaccamy, webb}). If the birth and death rates $b(x)$ and $d(x)$ do not
depend on age, $R_0(x)=b(x)/d(x)> 1$ if and only if $b(x)>d(x)$,
which is the standard super-criticality condition.

\bigskip
\noindent With the notation of Section \ref{sectiondescription}, in the age-logistic case and for a monomorphic population with trait $x_0$, the death
rate in (\ref{pde2}) equals
$d(x_0,a)+\eta(x_0,a)U(x_0,x_0)\int_{\mathbb{R}_+}
m(x_0,\alpha,t)d\alpha$. Equation (\ref{pde2}) is then a
Gurtin-McCamy equation parameterized by $x_0$ and Condition
\eqref{cssurvie} is also sufficient  to obtain Assumption
\ref{hypsolutionstationnaire} as proved in Webb \cite{webb},
Section 5.4.

\medskip
\noindent Let us now introduce assumptions describing the
"invasion implies fixation" principle, for the age-dependent
system defined by (\ref{pde3}). Assertion (ii) impedes the
co-existence of the two traits $x_0$ and $y$ in the long time.

\begin{hyp}\label{dimstan}
{\it (i)  Let $x_0, y\in \X$ with $R_0(x_0)>1$. One of the
following assumptions holds:
\begin{align}
\mbox{Either: }\int_{\R_+}b(x_0,a)e^{-\int_0^a
\widehat{d}(x_0,\alpha,y)d\alpha}da>1 \mbox{ and }
\int_{\R_+}b(y,a)e^{-\int_0^a \widehat{d}(y,\alpha,x_0)d\alpha}da<1\label{hypnoncoex1}\\
\mbox{Or: }\int_{\R_+}b(x_0,a)e^{-\int_0^a
\widehat{d}(x_0,\alpha,y)d\alpha}da<1 \mbox{ and }
\int_{\R_+}b(y,a)e^{-\int_0^a
\widehat{d}(y,\alpha,x_0)d\alpha}da>1,\label{hypnoncoex2}
\end{align}
where \begin{equation}
\widehat{d}(x,a,y):=d(x,a)+\int_0^{+\infty}U((x,a),(y,\alpha))
\widehat{m}(y,\alpha)d\alpha\label{simplifmortgelee}
\end{equation}is the death rate of an individual $(x,a)$ in the population
at equilibrium $\widehat{\xi}_y$.
\medskip

\noindent (ii) The solution of (\ref{formefaible}) with the
dimorphic initial condition  $\xi_0(dx,da)
=\delta_{x_0}(dx)m_0(x_0,a)da+\delta_{y}(dx)m_0(y,a)da$ converges
as $t\to \infty$ to $\widehat{\xi}_{x_0}$ if (\ref{hypnoncoex1})
is satisfied and to
 $\widehat{\xi}_y$ if (\ref{hypnoncoex2}) is satisfied (for the weak convergence topology in
$\mathcal{M}_F(\widetilde{\X})$).}
\end{hyp}

\bigskip
\noindent For a logistic dimorphic population with traits $x_0$ and $y$ ($R_0(x_0)>1$),
Assumption \ref{dimstan}
 is fulfilled as soon as $U(x_0,x_0)U(y,y)-U(x_0,y)U(y,x_0)\leq 0$
and one of the following assertion is satisfied:
\begin{align*}
 &
U(x_0,x_0)\widehat{M}_{x_0}-U(x_0,y)\widehat{M}_y<0 \quad \mbox{
and }\quad U(y,y)\widehat{M}_y-U(y,x_0)\widehat{M}_{x_0}>0,
\nonumber\\
\mbox{ or } & U(x_0,x_0)\widehat{M}_{x_0}-U(x_0,y)\widehat{M}_y>0
\quad \mbox{ and }\quad U(y,y)\widehat{M}_y-U(y,x_0)
\widehat{M}_{x_0}<0.
\end{align*}
(Let us recall that $\widehat{M}_{x_0}$ has been defined in
\eqref{mchap}).

\section{Age Structured Trait Substitution Sequence Process}\label{sectionpreztss}

\noindent In order to obtain the Adaptive Dynamics limit, we consider
here the same  mutation scale as in the work of Champagnat
\cite{champagnat3} without age-structure. We assume that
\begin{equation}\label{equationun}
\forall\  V>0,\quad  \exp(-Vn)=o(u_n)\quad \mbox{ and
}u_n=o\left(\frac{1}{n \log n}\right).
\end{equation} This scaling is here again the right one
to derive from the microscopic process a jump process generalizing the
 Trait Substitution Sequence (see Metz \textit{et al.} \cite{metzgeritzmeszenajacobsheerwaarden}, Champagnat \cite{champagnat3}) for age-structured populations. It is obtained from a fine study of the different time steps
in the invasion process of the resident population by a mutant one. This study has been done by Champagnat in \cite{champagnat3} and adapted to
 include age structure in Tran's thesis \cite{chithese}. Let us roughly recall the
main ideas justifying \eqref{equationun}. They are summarized in Figure \ref{fig1}. If no mutation occurs, then with a probability that tends to 1 when $n\rightarrow +\infty$, a monomorphic population with initial size $n$ enters a given neighborhood of its equilibrium after a sufficiently large time that does not depend on $n$. By large deviation results, the microscopic process stays in this neighborhood during an exponential time $e^{nV}$.
Mutation time is of order $1/(nu_n)$, and an invasion period is proved to be of order $\log n$.  Thus the condition $u_n=o(1/(n\log n))$ in (\ref{equationun}) implies that the mutations are sufficiently rare so that
  the population has returned to its monomorphic equilibrium when the next mutation occurs.
   The condition $\exp(-Vn)=o(u_n)$ tells us that the mutations occur sufficiently often so that
    a mutant appears before a rare event drives the resident population
    far from its equilibrium. Hence, if we change time and consider $(Z^n_{./(nu_n)})_{n\in
  \N^*}$ at the mutation time scale, we will obtain in the limit, when $n\rightarrow +\infty$, a process where the transition periods have disappeared and where only the sequence of equilibrium states remains.
   Under our hypotheses preventing the trait co-existence, the latter
   have a singleton trait-support and then reduce to age-measures.
 Therefore, the limiting  process jumps from an age-measure to
  another, each of these parameterized by a trait. This provides a generalization of the TSS proposed
  by Metz \textit{et al.} \cite{metzgeritzmeszenajacobsheerwaarden}. The result is stated in Theorem \ref{theoremtss} and proved in Appendix A.

\begin{figure}[ht]
 \begin{center}
  \begin{picture}(350,180)(-20,-10)
   \put(0,0){\vector(1,0){320}} \put(0,0){\vector(0,1){150}}
  \put(-10,-10){0}
 \put(-20,155){population size}
\put(310,-10){time}
      \qbezier[27](0,30)(25,32)(40,60)
     \qbezier[27](40,60)(55,115)(80,120)
     \dottedline(0,120)(320,120)
     \put(-25,117){$\langle \widehat{\xi},1\rangle$}
     \qbezier[27](0,20)(27,30)(40,50)
    \qbezier[27](40,50)(55,105)(80,110)
   \dottedline(80,110)(320,110)
   \qbezier[27](0,40)(25,42)(35,70)
  \qbezier[27](35,70)(55,125)(80,130)
 \dottedline(80,130)(320,130)
\dottedline{0.5}(0,30)(4,33)(5,30)(9,35)(12,32)(15,30)(18,35)(20,40)(21,43)(24,40)(26,47)(27,46)(29,50)(33,52)(35,54)
(38,59)(40,63)(43,68)(45,67)(47,75)(48,74)%
(50,80)(52,87)(55,93)(56,98)(57,97)(58,105)(59,110)(60,109)(61,114)(64,111)%
(66,112)(69,114)(71,120)(72,118)(75,117)(77,120)(80,121)(81,122)(83,124)(86,118)%
(88,120)(90,116)(91,115)(93,124)(95,122)(98,123)(100,120)(101,116)(103,118)(106,120)%
(107,124)(109,126)(110,125)(112,121)(113,117)(115,118)(118,120)(120,119)(123,122)%
(125,124)(126,123)(128,125)(129,120)(132,112)(134,113)(136,105)(138,98)(141,90)%
(143,91)(145,80)(148,82)(149,75)(151,67)(153,69)(154,57)(156,50)(158,53)%
(160,42)(161,44)(164,38)(166,40)(167,33)(170,34)(173,24)(175,26)(177,20)%
(181,15)(182,16)(184,10)(186,3)(189,4)(191,1)
(193,0)
\dottedline{3}(132,122)(134,123)(136,119)(138,122)(141,124)%
(143,118)(145,117)(148,121)(149,122)(151,118)(153,120)(154,119)(156,123)(158,116)%
(160,119)(161,124)(164,122)(166,123)(167,120)(170,124)(173,119)(175,118)(177,116)%
(181,119)(182,121)(184,119)(186,120)(189,122)(191,123)
(193,120)(196,118)(199,119)%
(202,120)(203,122)(205,120)(208,118)(210,119)(213,121)(215,122)(217,121)%
(220,123)(222,124)(224,123)(226,119)(229,118)(231,120)(233,121)(236,122)%
(239,119)(240,117)(242,118)(244,118)(247,121)(249,119)(252,122)(254,122)(255,123)%
(257,121)(258,121)(261,118)(262,116)(264,113)(267,110)(270,99)(272,100)%
(273,84)(275,89)(278,75)(279,69)(281,72)(283,57)(288,63)(290,56)
    \dottedline{3}(267,-40)(267,140)
    \put(268,-10){$\mathcal{T}^n$}
         \put(80,-35){\vector(1,0){184}}
        \put(200,-28){$e^{nV}$}
 \put(322,117){$\langle \xi_t,1\rangle$}
\put(292, 53){$\langle Z^n_t,1\rangle$}
 \dottedline{3}(80,-40)(80,140)
\dottedline{0.5}(110,0)(115,6)(118,11)(120,10)(122,13)(123,19)(125,18)(128,24)(130,29)(132,25)(134,32)(136,38)(137,37)(139,46)(142,49)(145,48)(147,56)(149,62)(153,60)(156,67)(157,70)(162,69)(166,78)(167,75)(170,84)(172,86)(175,85)
(178,92)(180,100)(183,97)(186,108)(188,106)(191,115)(193,120)(196,128)(200, 127)(203,135)(205,139)(207,140)(209,138)
(213,142)(215,141)
     \dottedline{3}(193,-8)(193,140)
    \dottedline{3}(110,-8)(110,140)
      \put(110,-6){\vector(1,0){83}}
    \put(150,-15){$\log n$}
   \put(95,-20){$\tau \sim \frac{C}{nu_n}$}
\end{picture}
 \end{center}
\vspace{0.5cm}
 \caption{{\small\textit{Large time behaviour of the microscopic process $Z^n$ and of its deterministic approximation $\xi$. On compact time intervals and for sufficiently large $n$, the behaviour of $Z^n$ follows the one of its approximation $\xi$. On a large time scale, if no mutation occurs, $Z^n$ leaves the neighborhood of the stationary equilibrium of $\xi$ at $\mathcal{T}^n$ after an exponentially long period and drives the population to extinction. However, before it gets extinct, the population can be invaded by a mutant in a time scale of order $1/nu_n$. Under the "Invasion implies fixation" Assumption \ref{dimstan}, if the mutant population does not die, it replaces the resident population after a transition period of order $\log n$.}}}\label{fig1}
\end{figure}
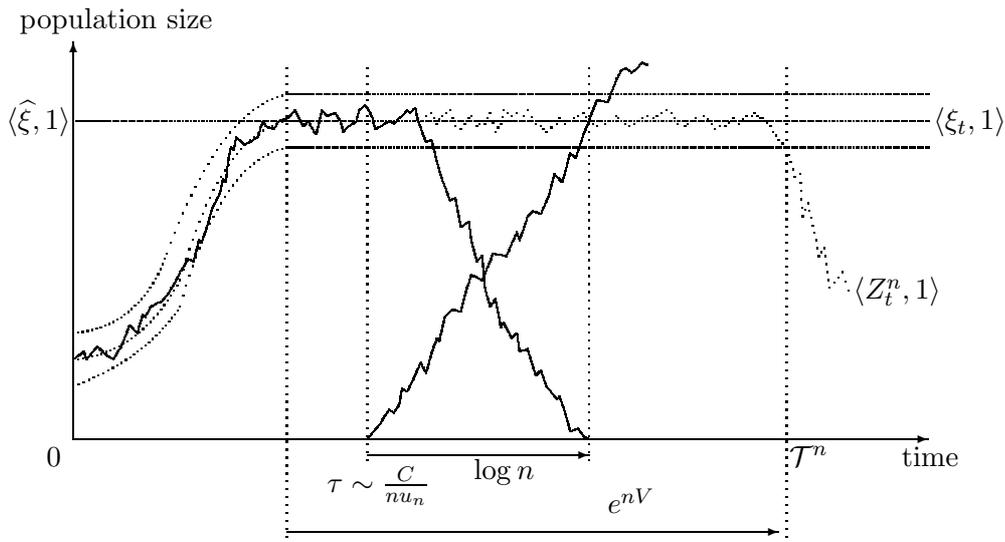

\begin{theorem}\label{theoremtss}Let us consider $(Z^n)$ defined by
 (\ref{processusmicrorenorm}) with a monomorphic trait support $\{X_0\}$  and satisfying
 Assumptions
\ref{hypothesetaux}, \ref{hypconditioninitcvedp} with
$\xi_0(dx,da)=\delta_{X_0}(dx)m_0(X_0,a)da$. Assume moreover that
$(u_n)_{n\in \N}$  satisfies (\ref{equationun}) and that Assumptions
\ref{hypsolutionstationnaire} and \ref{dimstan} are satisfied for
each $x_0,y\in \X$ with $R_0(x_0)>1$.
 Then, for every $t\in \R_+$,
 the sequence $(Z^n_{t/(nu_n)})_{n\in \N^*}$ converges in law
 in $\mathcal{M}_F(\widetilde{\X})$ to the measure $Z_t$ defined
 by
 \begin{equation} \label{TSS}
Z_t(dx,da)=\delta_{X_t}(dx)
\widehat{m}\left(X_t,a\right)da,\end{equation}
 where $X$ is a
Markov jump process
 of generator defined for $\phi\in \mathcal{B}_b(\X,\R)$, $x\in \X$ by
\begin{equation}\label{generateursaut}
L\phi(x)=\int_{\R^d}
\left[\left(\phi(x+h)-\phi(x)\right)
p\left(\int_{\R_+}b(x,a)\widehat{m}(x,a)da\right)(1-z_0(x+h,x))\right]k(x,h)dh,
\end{equation}where $z_0(y,x)$ is the smallest solution in $[0,1]$ of:
\begin{align}
 z  =  \int_0^{+\infty} e^{(z-1)\int_0^a b(y,u)du}\widehat{d}(y,a,x)e^{-\int_0^a
 \widehat{d}(y,u,x)du}da.
 \label{equationdeterminationsurvieprez}
\end{align} The death rate $\widehat{d}(y,a,x)$ has been defined in (\ref{simplifmortgelee}).
\end{theorem}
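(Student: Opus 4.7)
The plan is to adapt Champagnat's three-phase analysis from \cite{champagnat3} to the age-structured setting, tracking how the rescaled process $(Z^n_{t/(nu_n)})$ alternates between long quasi-stationary phases near equilibria $\widehat{\xi}_x$ and short mutation/invasion events that become instantaneous in the limit. Throughout, the condition $\exp(-Vn)=o(u_n)=o(1/(n\log n))$ ensures a clean separation between the ecological time scale $O(1)$, the invasion time scale $O(\log n)$, and the mutation time scale $O(1/(nu_n))$.

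First, I would establish the \emph{quasi-stationarity phase}. Starting from a monomorphic population close to $\widehat{\xi}_{x_0}$, remove mutations by Girsanov-type comparison (the probability of a mutation in time $t$ is bounded by $tn\bar b\, u_n p$, which is small on compact intervals). On $[0,T]$, Proposition \ref{prop26} gives convergence of $Z^n$ to the PDE solution, and Assumption \ref{hypsolutionstationnaire} then drives this solution to $\widehat{\xi}_{x_0}$. A large deviation argument in the spirit of Freidlin-Wentzell/Champagnat, valid because $b,d,U$ are bounded and Lipschitz, yields that $Z^n$ stays in a fixed $\eps$-neighborhood of $\widehat{\xi}_{x_0}$ for a time at least $e^{nV}$ with probability $1-o(1)$. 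On this neighborhood, the total mutation rate is, up to $O(\eps)$, equal to $n u_n p\int b(x_0,a)\widehat m(x_0,a)da$, so in the time scale $1/(nu_n)$ the first mutation time converges to an exponential variable of parameter $p\int b(x_0,a)\widehat m(x_0,a)da$, and the mutant trait $x_0+h$ is drawn from $k(x_0,h)dh$ by a standard conditioning argument on the Poisson point process driving births.

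Next, I would analyze the \emph{invasion phase} starting from one mutant $(x_0+h,0)$ in a resident population close to $\widehat{\xi}_{x_0}$. The key step, and I expect the main obstacle, is a coupling argument showing that as long as the mutant subpopulation size remains below $\eps n$, it is well-approximated by an age-structured branching process in which each individual of trait $y=x_0+h$ reproduces at rate $b(y,a)$ and dies at rate $\widehat d(y,a,x_0)$ (the mutation rate within the mutant branch is $O(u_n)=o(1)$ and negligible on the $O(\log n)$ invasion scale). Unlike the memoryless Markov setting in \cite{champagnat3}, one must control a nonlinear age-dependent branching process and simultaneously check that the resident trait stays at equilibrium despite the growing mutant. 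The first is handled by invoking the extinction theory for Crump-Mode-Jagers / Bellman-Harris processes (see Athreya-Ney, Doney): the extinction probability $z_0(y,x_0)$ is the smallest fixed point in $[0,1]$ of the functional equation \eqref{equationdeterminationsurvieprez}, obtained by conditioning on the life length and number of offspring of the ancestor; the second is handled by writing the resident's dynamics as a perturbation of the PDE (\ref{pde2}) with a forcing term of order $\eps$ and invoking continuous dependence on initial data. Standard branching process arguments then give that, conditionally on non-extinction, the mutant population reaches size $\eps n$ in time $O(\log n)$.

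Finally, in the \emph{fixation phase}, once both sub-populations have macroscopic mass, the rescaled process is close to the dimorphic PDE \eqref{pde3} by Proposition \ref{prop26}; Assumption \ref{dimstan}(ii) then forces convergence to $\widehat\xi_{x_0+h}$ (in the favorable case) in time $O(1)$, again well before the next mutation. Since the total duration of invasion plus fixation is $O(\log n)=o(1/(nu_n))$, these transitions collapse to instantaneous jumps in the rescaled time. Putting the three phases together and iterating, one obtains that $(Z^n_{t/(nu_n)})$ is, with probability tending to $1$, of the form $\delta_{X^n_t}(dx)\widehat m(X^n_t,a)da$ up to an $\eps$-error, where $X^n$ is a pure-jump process whose jump rate and kernel converge to those in \eqref{generateursaut} (the factor $1-z_0(x+h,x)$ coming from the probability of successful invasion). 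Tightness of $(X^n)$ in $\D(\R_+,\X)$ and identification of the limit via its generator, following the martingale problem formulation as in \cite{champagnat3}, concludes the proof. The form \eqref{TSS} of the limit then follows because at each jump time the population is near a monomorphic equilibrium.
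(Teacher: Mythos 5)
Your plan follows essentially the same route as the paper's proof in Appendix A: the same three-phase decomposition (quasi-stationarity near $\widehat{\xi}_{x_0}$ controlled by large deviations on the $e^{nV}$ scale, invasion analyzed by sandwiching the small mutant subpopulation between linear age-structured birth--death processes whose extinction probability is the smallest fixed point of \eqref{equationdeterminationsurvieprez} via Doney's Crump--Mode--Jagers theory, and fixation via the dimorphic PDE \eqref{pde3} together with Assumption \ref{dimstan}), with the same time-scale comparison $\log n \ll 1/(nu_n) \ll e^{Vn}$. The only cosmetic difference is at the final assembly, where the paper establishes the one-dimensional marginal convergence \eqref{equationtheoremeprincipal} directly rather than passing through tightness of $X^n$ in $\D(\R_+,\X)$, but this does not change the substance of the argument.
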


\begin{corol}\label{coroltss}The sequence $(Z^n_{./(nu_n)})_{n\in \N^*}$ converges to $Z$ in
 $\mathbb{D}(\R_+,\mathcal{M}_F(\widetilde{\X}))$ in the sense of finite-dimensional distributions.
\end{corol}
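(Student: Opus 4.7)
I would proceed by induction on the number $k$ of time points, exploiting the time-homogeneous Markov property of the microscopic process $Z^n$. The base case $k=1$ is exactly Theorem \ref{theoremtss}. For the inductive step, fix $0\leq t_1<\cdots<t_k$ and bounded continuous test functions $f_1,\ldots,f_k$ on $\mathcal{M}_F(\widetilde{\X})$. Conditioning on $\mathcal{F}^n_{t_{k-1}/(nu_n)}$ and using time-homogeneity gives
\begin{equation*}
\mathbb{E}\Bigl[\prod_{i=1}^{k} f_i(Z^n_{t_i/(nu_n)})\Bigr]
= \mathbb{E}\Bigl[\prod_{i=1}^{k-1} f_i(Z^n_{t_i/(nu_n)})\,\Psi^n_{t_k-t_{k-1}}(Z^n_{t_{k-1}/(nu_n)})\Bigr],
\end{equation*}
where $\Psi^n_s(\mu):=\mathbb{E}_\mu[f_k(Z^n_{s/(nu_n)})]$. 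Combined with the induction hypothesis, with the Markov property of the limiting TSS $X$, and with continuity of the TSS semigroup in the starting trait, the inductive step reduces to the following claim.

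\textbf{Key claim.} For every $s>0$ and every sequence $\mu^n$ of atomic measures of the form $\frac{1}{n}\sum\delta_{(x_i,a_i)}$ converging in law to the monomorphic equilibrium $\widehat{\xi}_{x^*}=\delta_{x^*}(dx)\widehat{m}(x^*,a)da$, one has $\Psi^n_s(\mu^n)\to\widetilde{\Psi}_s(x^*)$, where $\widetilde{\Psi}_s(x):=\mathbb{E}_x[f_k(\delta_{X_s}(dy)\widehat{m}(X_s,a)da)]$. This is an extension of Theorem \ref{theoremtss} to initial conditions that are only asymptotically monomorphic at equilibrium. The three-stage analysis sketched around Figure \ref{fig1} carries over with only the first stage modified: the $O(1)$ relaxation toward the PDE equilibrium (Proposition \ref{prop26} and Assumption \ref{hypsolutionstationnaire}) becomes essentially trivial because $\mu^n$ already lies close to $\widehat{\xi}_{x^*}$, while the $O(1/(nu_n))$ waiting time until a viable mutation and the $O(\log n)$ invasion phase under Assumption \ref{dimstan} remain unchanged. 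Once this extended single-time convergence is granted, one can apply it with $\mu^n:=Z^n_{t_{k-1}/(nu_n)}$ and close the induction by the continuous mapping theorem jointly with the induction hypothesis for $k-1$ time points.

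\textbf{Main obstacle.} The delicate point is controlling the residual non-monomorphic mass of $Z^n_{t_{k-1}/(nu_n)}$ for finite $n$: small subpopulations of traits $y\neq x^*$ could in principle seed a competing invasion before being swept out. Two ingredients resolve this. First, the fixation probability $1-z_0(y,x^*)$ in (\ref{equationdeterminationsurvieprez}) is continuous in $y$ via continuity of the rates $b(y,\cdot)$ and $\widehat{d}(y,\cdot,x^*)$, which itself follows from Assumption \ref{hypothesetaux} together with continuity of $x\mapsto\widehat{m}(x,\cdot)$, derived from the balance equation (\ref{eqbalance}) and the explicit form (\ref{formesolutionsstationnaires}); consequently the generator (\ref{generateursaut}) and hence the TSS semigroup are continuous in the starting trait. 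Second, a subcritical age-structured branching argument (in the spirit of \cite{champagnat3}, comparing to the linearized birth-and-death process with rates $b(y,\cdot)$ and $\widehat{d}(y,\cdot,x^*)$) shows that the subdominant traits in $\mu^n$ go extinct in $O(\log n)=o(1/(nu_n))$ time by the upper constraint in (\ref{equationun}), so they cannot survive long enough to interfere with the first successful mutation on the rescaled clock. These two points together secure the key claim and complete the proof of the corollary.
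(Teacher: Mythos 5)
Your overall strategy (induction on the number of time points via the Markov property, reducing to a single-time convergence statement for perturbed initial conditions) is the standard route, and it is consistent with what the paper does: the appendix actually proves the stronger single-time statement \eqref{equationtheoremeprincipal}, which asserts that with probability tending to $1$ the rescaled process is \emph{exactly monomorphic} and $\varepsilon$-close to the corresponding equilibrium, and the finite-dimensional extension is deferred to \cite{chithese} along the lines of \cite{champagnat3}. However, your key claim is false as stated, and this is a genuine gap. Convergence in law of $\mu^n=\frac{1}{n}\sum\delta_{(x_i,a_i)}$ to $\widehat{\xi}_{x^*}$ does not prevent $\mu^n$ from carrying a subpopulation of some trait $y\neq x^*$ with total mass $o(1)$ but a diverging \emph{number} of individuals (say $\sqrt{n}$ individuals, mass $n^{-1/2}$). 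If $y$ satisfies \eqref{conditionenoncezo} relative to $x^*$, each of these individuals founds an approximately independent supercritical age-structured branching process with survival probability $1-z_0(y,x^*)>0$, so invasion occurs with probability tending to $1$ and $\Psi^n_s(\mu^n)$ does not converge to $\widetilde{\Psi}_s(x^*)$. Your proposed resolution of the ``main obstacle'' — that the residual traits die out in $O(\log n)$ by a \emph{subcritical} branching comparison — is exactly where the argument breaks: nothing forces the residual traits to be subcritical with respect to the resident $x^*$, and Assumption \ref{dimstan} only controls what happens once one of the two traits has reached macroscopic mass, not which one wins from a mesoscopic seed.

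The repair is to carry a stronger statement through the induction: propagate the event from \eqref{equationtheoremeprincipal} that $Z^n_{t_{k-1}/(nu_n)}$ has \emph{singleton} trait support $\{y\}$ and satisfies $D(Z^n_{t_{k-1}/(nu_n)},\widehat{\xi}_y)<\varepsilon$, an event of probability tending to $1$; on this event there is no residual subpopulation at all, and the key claim only needs to be proved for exactly monomorphic initial conditions in an $\varepsilon$-neighborhood of $\widehat{\xi}_y$, uniformly in $y$ over the viable traits and in the initial condition within that neighborhood (this uniformity is the other ingredient you gesture at but do not establish; it is what the large-deviation estimates such as \eqref{estimeetauetr} and the continuity of $z_0$ deliver). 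With the claim reformulated this way, your conditioning argument and the continuity of the TSS semigroup in the starting trait close the induction correctly.
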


\noindent The process $X$ is called \textit{Age Structured Trait
Substitution Sequence Process}.

\medskip

\noindent We have already given the main ideas of Theorem \ref{theoremtss}'s
proof. Let
 us comment on the different terms of the generator (\ref{generateursaut}). \\
\textbf{1.} The term $ p\,k(x,h)
\int_0^{+\infty}b(x,a)\widehat{m}(x,a)da $ is the rate at which
the monomorphic
 population of trait $x$, in its equilibrium, generates a mutant of trait $x+h$. This term
 equals $p b(x) \widehat{M}_x k(x,h)$ in the absence of age-structure.\\
\textbf{2.} The term $(1-z_0(x+h,x))$ is called \textit{fitness}
of the mutant trait $x+h$ in the population of trait $x$ and is the probability that the mutant descendants invade the
resident population. In most cases,  this probability $z_0(x+h,x)$
can not be computed explicitly conversely to the case without
age-structure. Nevertheless, its implicit definition given in
Equation (\ref{equationdeterminationsurvieprez}) makes possible
its numerical computation as developed in the examples of
Sections \ref{sectionexemple} and \ref{sectionexemple2}.

\noindent Let us now focus on the way we establish the equation
defining the fitness function $z_0(y,x_0)$ where $x_0\in \X$ is
the resident trait and $y$ the mutant trait. When the mutant $y$ appears in the monomorphic resident
population with trait $x_0$, we can neglect in a first
approximation the mutant population and the deviations of the
resident population from its equilibrium. The mutant population is
then compared to a linear age-structured birth and death process
with parameters $b(y,a)$ and $\widehat{d}(y,a,x_0)$. For this process, the extinction probability is the smallest solution $z_0(y,x_0)$ in $[0,1]$ of (\ref{equationdeterminationsurvieprez}). Next
proposition proved in Appendix \ref{sectiontraitsubstitutionsequence} identifies
 $z_0(y,x_0)$ as the extinction probability of
the real mutant progeny in large population.

\begin{prop}\label{propextinction} Consider the process $Z^n$ as in Theorem
\ref{theoremtss} and starting from
 $Z^n_0(dx,da)=\delta_{x_0}(dx) q_0^n(x_0,da)+\frac{1}{n} \delta_{(y,0)}(dx,da)$, where $\delta_{x_0}(dx)q_0^n(x_0,da)$ is a point measure weighted by $1/n$ with support in $\{x_0\}\times \R_+$ and converging to $\widehat{\xi}_{x_0}$ in $\mathcal{M}_F(\widetilde{\X})$. We denote
 by $\mathbb{P}^n_{x_0,q^n_0,y}$ its law. Let $\tau$ be the first time at which a mutation
occurs,  $\theta$  the first time of return to a monomorphic
population and $V$ the survival trait at  time $\theta$. Then, under the assumptions of
Theorem \ref{theoremtss},\\
i)
$\lim_{n\rightarrow+\infty}\mathbb{P}^n_{x_0,q^n_0,y}\left(\theta<\tau,\,
 V=y\right)=1-z_0(y,x_0),$ \\
ii)
$\lim_{n\rightarrow+\infty}\mathbb{P}^n_{x_0,q^n_0,y}\left(\theta<\tau,\,
 V=x_0\right)=z_0(y,x_0),$
 where $z_0(y,x)$ is the extinction probability of a linear age-structured birth
 and death process
with parameters $b(y,a)$ and $\widehat{d}(y,a,x_0)$.
\end{prop}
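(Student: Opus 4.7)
The strategy is the standard three-phase decomposition of an invasion event, adapted here to the age structure. Phase~1: as long as the mutant sub-population stays microscopic (size $\le \varepsilon n$) and no further mutation occurs, the resident remains in an $\varepsilon$-neighbourhood of $\widehat{\xi}_{x_0}$, so that the mutant lineage can be coupled with an age-structured linear birth--death process of birth rate $b(y,a)$ and death rate $\widehat{d}(y,a,x_0)$. Phase~2: conditional on survival of this branching process, a law-of-large-numbers argument propagates the mutant sub-population up to a macroscopic size. Phase~3: once both populations are macroscopic, Proposition~\ref{prop26} applied to a dimorphic initial condition together with Assumption~\ref{dimstan} forces the system into $\widehat{\xi}_y$ in a finite deterministic time. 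The scale (\ref{equationun}) of $u_n$ guarantees that the whole invasion is completed before the next mutation with probability tending to $1$.

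More precisely, I would proceed as follows. First, fix $\varepsilon>0$ small and introduce the stopping time $\theta^n_\varepsilon$ at which the mutant mass exits $[0,\varepsilon]$ or the resident mass leaves an $\varepsilon$-neighbourhood of $\widehat{M}_{x_0}$; by Proposition~\ref{prop26} and standard large-deviation estimates for the resident alone (cf.\ Champagnat~\cite{champagnat3} and its age-structured adaptation in~\cite{chithese}), $\theta^n_\varepsilon$ is, with probability tending to one, either the hitting time of $\varepsilon n$ mutants or a time exponentially large in $n$. On $[0,\theta^n_\varepsilon]$ the death rate felt by any mutant of age $a$ lies in $[\widehat{d}(y,a,x_0)-C\varepsilon,\widehat{d}(y,a,x_0)+C\varepsilon]$, so one can sandwich the mutant point measure between two age-structured branching processes of perturbed parameters, both of which converge (as $\varepsilon\to 0$) to the linear age-structured birth--death process with parameters $(b(y,\cdot),\widehat{d}(y,\cdot,x_0))$. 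By the classical analysis of such processes (Athreya--Ney~\cite{athreyaney}, Doney~\cite{doney}), computing $E[z^N]$ where $N$ is the number of offspring of a single ancestor dying at age $a$ (then Poisson with mean $\int_0^a b(y,u)\,du$) and integrating against the lifetime density $\widehat{d}(y,a,x_0)e^{-\int_0^a\widehat{d}(y,u,x_0)du}$ yields exactly the fixed-point equation (\ref{equationdeterminationsurvieprez}), whose smallest solution in $[0,1]$ is the extinction probability $z_0(y,x_0)$. Passing $\varepsilon\to 0$ after $n\to+\infty$, the probability that the mutant lineage is extinct before reaching mass $\varepsilon$ tends to $z_0(y,x_0)$; on this event the population is monomorphic of trait $x_0$ before time $\theta^n_\varepsilon$, giving item~(ii).

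On the complementary event, the mutant mass reaches $\varepsilon$ at a time of order $\log n$ (exponential growth rate of the supercritical branching process). Starting the clock afresh at this moment and using the dimorphic convergence in Proposition~\ref{propconvergence1chap4}(ii), the normalised process is, on any compact time interval, arbitrarily close to the solution of (\ref{pde3}) with initial condition $\widehat{\xi}_{x_0}+\varepsilon\delta_y(dx)\,m_0(y,a)da$. Under Assumption~\ref{dimstan}(ii), this deterministic solution reaches an $\varepsilon'$-neighbourhood of $\widehat{\xi}_y$ in finite time $T(\varepsilon,\varepsilon')$, after which the resident is of order $\varepsilon'$ and a symmetric linear birth--death comparison shows that it disappears in a further $O(\log n)$ time, yielding $V=y$ and item~(i). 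The absence of further mutations on the whole time window, of total length $O(\log n)$, follows from (\ref{equationun}): $\mathbb{P}^n_{x_0,q^n_0,y}(\tau\le C\log n)\le C' n u_n \log n=o(1)$.

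The main technical obstacle is the coupling in Phase~1: one must control simultaneously (a) that the resident age profile stays close to $\widehat{m}(x_0,\cdot)$ uniformly on a time scale longer than the typical extinction or fixation time of the mutant branching process, and (b) that the resulting time-inhomogeneous age-structured birth--death process for the mutants converges, in the sense of extinction probability, to the homogeneous one associated with $\widehat{d}(y,a,x_0)$. Both ingredients are present but scattered in the literature (Fournier--M\'el\'ard~\cite{fourniermeleard}, Tran~\cite{trangdesdev,chithese}, Champagnat~\cite{champagnat3}); assembling them rigorously, while keeping the error uniform in $n$ up to times $O(\log n)$, is the delicate part. Once this is done, identification of the limit with the smallest solution of (\ref{equationdeterminationsurvieprez}) is a direct consequence of Harris-type branching arguments for age-structured populations.
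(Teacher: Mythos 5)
Your proposal follows essentially the same route as the paper's proof: sandwiching the microscopic mutant sub-population between two age-structured linear birth--death processes with $\varepsilon$-perturbed rates while the resident stays near $\widehat{\xi}_{x_0}$, identifying the extinction probability via the embedded Galton--Watson generating function (Poisson offspring given the lifetime, integrated against the lifetime density, which is exactly Equation (\ref{equationdeterminationsurvieprez})), then using the dimorphic deterministic dynamics of Proposition \ref{propconvergence1chap4}(ii) together with Assumption \ref{dimstan} and a subcritical branching comparison for the resident in the fixation phase. The decomposition, the key comparison lemma, and the time-scale bookkeeping under (\ref{equationun}) all coincide with the argument given in Appendix \ref{sectiontraitsubstitutionsequence}.
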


\begin{figure}[ht]
\begin{center}
\begin{minipage}[b]{.46\textwidth}\centering
\begin{picture}(100,180)(-20,0)
   \put(-15,0){\vector(0,1){160}}
  \put(-10,165){time}
 \put(0,0){\line(0,1){88}}
\put(50,40){\line(0,1){51}}
    \put(20,70){\line(0,1){34}}
   \put(10,80){\line(0,1){10}}
  \put(30,98){\line(0,1){20}}
 \put(40,85){\line(0,1){48}}
\put(60,81){\line(0,1){68}}
    \put(70,68){\line(0,1){52}}
   \put(80,52){\line(0,1){12}}
  \dottedline{3}(0,40)(50,40)
 \dottedline{3}(0,70)(20,70)
\dottedline{3}(0,80)(10,80)
    \dottedline{3}(20,98)(30,98)
   \dottedline{3}(20,85)(40,85)
  \dottedline{3}(50,52)(80,52)
 \dottedline{3}(50,68)(70,68)
\dottedline{3}(50,81)(60,81)
\end{picture}
\vspace{-0.3cm}
\end{minipage}
\hspace{0cm}
\begin{minipage}[b]{.46\textwidth}\centering
\begin{pspicture}(1.5,0)
    \pstree[treemode=U]{\Tcircle{}}
            {
              \Tcircle{}
             \pstree{\Tcircle{}}
               {
                \Tcircle{}
             \Tcircle{}}
         \pstree{\Tcircle{}}
            {
             \Tcircle{}
            \Tcircle{}
           \Tcircle{}}}
\end{pspicture}
\vspace{0.5cm}
\end{minipage}
\end{center}
 \caption{{\small\textit{An age-structured process
in continuous time and its discrete time generation-tree.}}}\label{figuregaltonwatson}
\end{figure}
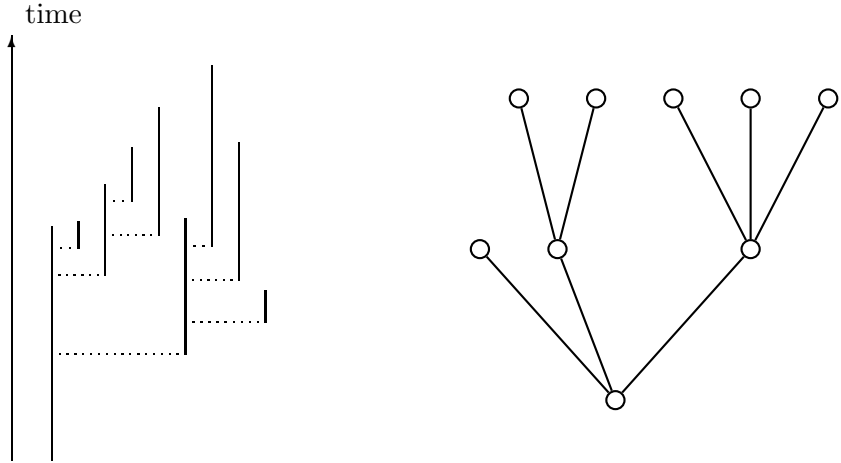

\noindent A proof of \eqref{equationdeterminationsurvieprez} is adapted
from Doney \cite{doney}. Noticing that
 the extinction of the mutant progeny for the linear
age-structured birth and death process is equivalent to the
extinction of
 the discrete-time Galton-Watson
 process corresponding to the underlying generation process
(see Figure \ref{figuregaltonwatson}), $z_0(y,x_0)$ is characterized as the smallest fixed point on
$[0,1]$ of the generating function of the number of children and
satisfies:
\begin{align}
 z  = & G(z)=\int_0^{+\infty} e^{(z-1)\int_0^a b(y,u)du}\widehat{d}(y,a,x_0)e^{-\int_0^a \widehat{d}(y,u,x_0)du}da
 \label{equationdeterminationsurvie}\\
= & 1+\int_0^{+\infty}(z-1)b(y,a)e^{(z-1)\int_0^a b(y,u)du-\int_0^a \widehat{d}(y,u,x_0)du}da,
\label{equationdeterminationsurvie2}
\end{align}
Equation (\ref{equationdeterminationsurvie2}) is obtained from
(\ref{equationdeterminationsurvie}) by integration by parts.
\medskip

\noindent A main issue is to know whether extinction happens almost surely
or not.
 \begin{prop}\label{proprappelg}Let $x_0,y\in \X$.  \\
(i) If \begin{equation}\label{conditionenoncezo}\int_0^{+\infty}
b(y,a)e^{-\int_0^a \widehat{d}(y,u,x_0)du}da> 1,\end{equation}then
$z_0(y,x_0)<1$ and defines a $\Co^1$
 function in both variables.\\
(ii) Else, $z_0(y,x_0)=1$.\\
(iii) Finally, when $y=x_0$, $ z_0(x_0,x_0)=1.$
\end{prop}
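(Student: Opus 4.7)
The plan is to analyse the function $G:[0,1]\to[0,1]$ appearing in \eqref{equationdeterminationsurvie}, namely
\begin{equation*}
G(z)=\int_0^{+\infty} e^{(z-1)\int_0^a b(y,u)\,du}\,\widehat{d}(y,a,x_0)\,e^{-\int_0^a \widehat{d}(y,u,x_0)\,du}\,da,
\end{equation*}
and to exploit the classical Galton--Watson dichotomy to locate its smallest fixed point. A direct computation yields $G(1)=1$, the integrand being exactly the derivative of $-\,e^{-\int_0^a\widehat{d}(y,u,x_0)\,du}$, which vanishes at $+\infty$ thanks to $\widehat{d}\ge\underline{d}>0$. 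Moreover $G(0)>0$ since the integrand is strictly positive. The bounds in Assumption \ref{hypothesetaux} legitimate differentiation under the integral sign, and
\begin{equation*}
G''(z)=\int_0^{+\infty}\Bigl(\int_0^a b(y,u)\,du\Bigr)^{\!2}\, e^{(z-1)\int_0^a b(y,u)\,du}\,\widehat{d}(y,a,x_0)\,e^{-\int_0^a \widehat{d}(y,u,x_0)\,du}\,da>0,
\end{equation*}
so $G$ is strictly convex on $[0,1]$.

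The key step is the identification of $G'(1)$ with the left-hand side of \eqref{conditionenoncezo}. Differentiating at $z=1$ gives
\begin{equation*}
G'(1)=\int_0^{+\infty}\Bigl(\int_0^a b(y,u)\,du\Bigr)\,\widehat{d}(y,a,x_0)\,e^{-\int_0^a\widehat{d}(y,u,x_0)\,du}\,da.
\end{equation*}
Integration by parts with $v(a)=\int_0^a b(y,u)\,du$ (so $v'=b(y,\cdot)$ and $v(0)=0$) against the antiderivative $-\,e^{-\int_0^a\widehat{d}(y,u,x_0)\,du}$ of $\widehat{d}(y,a,x_0)\,e^{-\int_0^a\widehat{d}(y,u,x_0)\,du}$, the boundary contribution at $+\infty$ vanishing by exponential decay against the at-most-linear growth of $v$, produces
\begin{equation*}
G'(1)=\int_0^{+\infty}b(y,a)\,e^{-\int_0^a\widehat{d}(y,u,x_0)\,du}\,da.
\end{equation*}

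The Galton--Watson trichotomy then yields (i) and (ii). If $G'(1)>1$, strict convexity together with $G(1)=1$ forces $G(z)<z$ just below $1$; combined with $G(0)>0$, the intermediate value theorem gives a fixed point in $(0,1)$, and strict convexity ensures that it is the smallest, proving it is $z_0(y,x_0)<1$. If instead $G'(1)\le 1$, convexity gives $G(z)\ge 1+G'(1)(z-1)\ge z$ on $[0,1]$, so $z=1$ is the only fixed point and $z_0(y,x_0)=1$. For the $\Co^1$ regularity in (i), I would apply the implicit function theorem to $F(z,y,x_0):=z-G(z,y,x_0)$ at $(z_0,y,x_0)$. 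The mean value theorem applied to $G$ on $[z_0,1]$ furnishes some $\xi\in(z_0,1)$ with $G'(\xi)=1$, and strict monotonicity of $G'$ forces $G'(z_0)<1$, so $\partial_z F=1-G'(z_0)>0$. The $\Co^1$ dependence of $G$ on the parameters $(y,x_0)$ follows from Assumption \ref{hypothesetaux} together with the smooth dependence of the stationary density $\widehat{m}(x_0,\cdot)$ on $x_0$, read off from the explicit form \eqref{formesolutionsstationnaires} and the balance condition \eqref{eqbalance}.

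Finally, (iii) is a specialisation: taking $y=x_0$, the balance identity \eqref{eqbalance} asserts precisely that $\int_0^{+\infty}b(x_0,a)\,e^{-\int_0^a\widehat{d}(x_0,u,x_0)\,du}\,da=1$, i.e.\ $G'(1)=1$; we are in the regime of (ii), whence $z_0(x_0,x_0)=1$. The one genuine obstacle in the programme will be justifying the smooth parametric dependence $x_0\mapsto \widehat{m}(x_0,\cdot)$ required for the $\Co^1$ conclusion in (i); everything else amounts to the standard convexity argument for the extinction probability of a continuous-time age-structured birth-and-death process through its embedded Galton--Watson generating function (cf.\ Figure \ref{figuregaltonwatson}).
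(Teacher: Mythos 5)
Your proof is correct, and it reaches all three conclusions by a route that is genuinely, if mildly, different from the paper's. You run the classical Galton--Watson dichotomy on the generating function $G$ itself: strict convexity, $G(1)=1$, and the identification $G'(1)=\int_0^{+\infty}b(y,a)e^{-\int_0^a\widehat d(y,u,x_0)du}\,da$ via integration by parts. The paper performs that same integration by parts once and for all on the fixed-point equation, passing from \eqref{equationdeterminationsurvie} to \eqref{equationdeterminationsurvie2}, and then divides out the trivial root $z=1$: the nontrivial roots become the zeros of the function $F(z,y,x)$ of \eqref{equationdeterminationsurvie3}, which is strictly \emph{increasing} in $z$ with $F(0,y,x)<0$ and $F\to+\infty$, so the convexity dichotomy is replaced by a monotonicity argument (Lemma \ref{g}) yielding a single root $g(y,x)$ with $z_0=g\wedge 1$ and criterion \eqref{conditionenoncezo} read off from the sign of $F(1,y,x)$. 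What the paper's route buys is a $\Co^1$ function $g$ defined on all of $\X^2$, whose derivative $\partial_1 g(x,x)$ is exactly what enters the fitness gradient \eqref{gradientfitness} and the Canonical Equation \eqref{equationcanonique}; your argument gives $\Co^1$ regularity of $z_0$ only on the supercritical region, which suffices for the proposition as stated but not for the later use. What your route buys is an explicit verification that $z_0$ is the \emph{smallest} fixed point and that $1-G'(z_0)>0$, the nondegeneracy the paper gets for free from $\partial_z F>0$. Two further remarks: both proofs rest on the $\Co^1$ dependence of $\widehat m(x_0,\cdot)$, hence of $\widehat d(y,a,x_0)$, on the traits, which you rightly flag and the paper invokes silently; and your strict inequality $G''>0$ (like the paper's $\lim_{z\to+\infty}F=+\infty$) fails in the degenerate case $b(y,\cdot)\equiv 0$, where $G\equiv 1$ and conclusion (ii) holds trivially.
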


\noindent Of course, $z=1$ is an obvious solution of
 (\ref{equationdeterminationsurvie2}). Each other solution is a zero of $F$ where
 \begin{equation}\label{equationdeterminationsurvie3}
 F(z,y,x)=\int_0^{+\infty}b(y,a)e^{(z-1)\int_0^a b(y,u)du-
\int_0^a \widehat{d}(y,u,x_0)du}da-1. \end{equation}
 The proof is
therefore an immediate consequence of the following lemma.

\begin{lemme}\label{g}
For each pair of  traits $x,y$, the equation $\  F(z,y,x)=0$
admits a unique solution $z=g(y,x)$ on $\R_+$ of class $C^1$ in
both variables and such that $g(x,x)=1$.
\end{lemme}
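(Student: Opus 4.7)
\medskip
\noindent \textbf{Proof plan for Lemma \ref{g}.} The plan is to study $F(\cdot,y,x)$ as a scalar function of $z$, establish strict monotonicity and the sign of the boundary values, and then invoke the implicit function theorem to obtain both uniqueness and $C^1$ regularity; the identification $g(x,x)=1$ is then a direct consequence of the balance condition (\ref{eqbalance}). Throughout I write $B(y,a)=\int_0^a b(y,u)\,du$ and $D(y,a,x)=\int_0^a \widehat{d}(y,u,x)\,du$, so that
\[
F(z,y,x)=\int_0^{+\infty} b(y,a)\, e^{(z-1)B(y,a)-D(y,a,x)}\,da\;-\;1.
\]
Assume throughout that $b(y,\cdot)\not\equiv 0$ (the degenerate case is uninteresting in the birth--death setting).

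\medskip
\noindent First I would check the monotonicity properties. Differentiating under the integral (justified on any interval of $z$ on which the integrand is dominated, which follows from Assumption \ref{hypothesetaux} and the exponential decay in $a$ coming from $\widehat{d}(y,a,x)\ge\underline{d}>0$) gives
\[
\partial_z F(z,y,x)=\int_0^{+\infty} b(y,a)\,B(y,a)\,e^{(z-1)B(y,a)-D(y,a,x)}\,da \;>\;0,
\]
so $F(\cdot,y,x)$ is strictly increasing (and in fact convex, since $\partial_z^2 F$ has the same sign). For the boundary values, integration by parts gives $\int_0^{+\infty} b(y,a)\,e^{-B(y,a)}\,da=1-e^{-B(y,\infty)}\le 1$; inserting the strictly sub-unit factor $e^{-D(y,a,x)}$ (strict because $\underline{d}>0$) yields $F(0,y,x)<0$. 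For the behaviour at infinity, on the set $\{a:b(y,a)>0\}$ one has $B(y,a)>0$, so the integrand diverges pointwise to $+\infty$ as $z\to+\infty$; Fatou's lemma then gives $\lim_{z\to+\infty}F(z,y,x)=+\infty$ (if the integral becomes infinite earlier, monotonicity still guarantees that $F$ crosses zero). Combining these three facts, the intermediate value theorem and strict monotonicity yield a unique $z=g(y,x)\in\mathbb{R}_+$ solving $F(z,y,x)=0$.

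\medskip
\noindent For the $C^1$ regularity I would apply the implicit function theorem to $F$ at the point $(g(y,x),y,x)$. The $C^1$ smoothness of $F$ in $(z,y,x)$ follows from the $C^1$ assumptions on $b$, $d$ and $U$ in Assumption \ref{hypothesetaux}, together with the $C^1$ dependence of the stationary profile $\widehat{m}(x,\cdot)$ on $x$ (this comes from the explicit formula (\ref{formesolutionsstationnaires}) combined with the balance equation (\ref{eqbalance}); the latter determines $\widehat{m}(x,0)$ smoothly as a function of $x$ via another application of the implicit function theorem on an equation of the same type as ours). Domination arguments, again using $\underline{d}>0$, justify the differentiation under the integral sign on a neighbourhood of the solution. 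Since $\partial_z F(g(y,x),y,x)>0$, the implicit function theorem delivers a $C^1$ local solution, which by uniqueness coincides with $g$. Finally, taking $y=x$ in $F$ and using the balance condition (\ref{eqbalance}) gives $F(1,x,x)=\int_0^{+\infty}b(x,a)\,e^{-D(x,a,x)}\,da-1=0$, so by uniqueness $g(x,x)=1$.

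\medskip
\noindent The main obstacle is the behaviour of $F$ as $z\to+\infty$: depending on whether $B(y,\infty)$ is finite or infinite, the integrand may remain integrable for all $z$ or blow up at a finite critical value of $z$. Both cases have to be handled, but in each the combination of strict monotonicity of $F$ in $z$ and the pointwise divergence of the integrand on the set where $b(y,\cdot)$ is positive suffices to conclude, via Fatou's lemma or monotone convergence, that $F$ eventually takes nonnegative values. Once this is secured, the rest of the argument is a standard application of IFT/IVT.
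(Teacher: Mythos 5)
Your proposal is correct and follows essentially the same route as the paper's own proof: strict monotonicity of $F$ in $z$ via the positive partial derivative, the sign $F(0,y,x)<0$ obtained by integration by parts, divergence of $F$ as $z\to+\infty$, the intermediate value theorem for existence and uniqueness, the implicit function theorem for $C^1$ regularity, and the balance condition (\ref{eqbalance}) for $g(x,x)=1$. You are in fact slightly more careful than the paper on three minor points --- the nondegeneracy assumption $b(y,\cdot)\not\equiv 0$, the possible blow-up of the integral at a finite value of $z$ when $\int_0^{+\infty}b(y,u)\,du=+\infty$, and the $C^1$ dependence of $\widehat{m}(x,\cdot)$ on $x$ needed for the joint smoothness of $F$ --- all of which the paper passes over silently.
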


\begin{proof}
The function $F$ is of class $\Co^1$ with a positive partial
derivative with respect to $z$. It is thus strictly increasing in $z$. Since $\int_0^{+\infty}b(y,a)e^{-\int_0^a b(y,u)du}da
 =e^{-\int_0^{+\infty}b(y,u)du}-1<0$, then $F(0,y,x)<0$.
Moreover $\lim_{z\rightarrow +\infty}F(z,y,x)=+\infty$. We deduce
that for every $x,y\in \X$, there is a unique solution $z=g(y,x)$
on $\R_+$ to $\  F(z,y,x)=0$
  which is equal
to 1 if and only if $\int_0^{+\infty} b(y,a)e^{-\int_0^a
\widehat{d}(y,u,x_0)du}da= 1$. This condition is true for $y=x_0$
by \eqref{eqbalance}. Using the Implicit Function Theorem we
obtain moreover that $g$ is of class $\Co^1$.
\end{proof}

\noindent In both cases, we can notice that:
\begin{equation}
z_0(y,x_0)=g(y,x_0)\wedge 1\mbox{ and }1-z_0(y,x_0)=\left[1-g(y,x_0)\right]_+,
\label{introductiong}
\end{equation}where $[.]_+$ denotes the positive part.

\begin{rque}\label{rquenedependpasdea}For a population without age-structure
 Equation (\ref{equationdeterminationsurvie}) can be solved
explicitly and we recover the infinitesimal generator
introduced by Metz et \textit{al.}
\cite{metzgeritzmeszenajacobsheerwaarden} with: \begin{equation}1-z_0(y,x_0)=
\left[\frac{b(y)-d(y)-U(y,x_0)\widehat{M}_{x_0}}{b(y)}\right]_+.\label{z0sansage}\end{equation}
\end{rque}

\section{Canonical Equation for an Age-structured population}\label{sectionprezec}

We are interested in the limit of the \textit{Age-structured Trait
Substitution Sequence Process} when the mutation step tends to
zero. Generalizing the approach of Dieckmann and Law
\cite{dieckmannlaw} and Champagnat \cite{champagnat} in the case
without age, we consider, for $\varepsilon>0$, the infinitesimal
generator $L^\varepsilon$ defined for every
$\phi\in\mathcal{B}_b(\X,\R)$ and $x\in\X$ by:
\begin{equation}\label{generateursautreormalise}
L^\varepsilon \phi(x) =\frac{1}{\varepsilon^2}\int_{\R^d}
\left(\phi(x+\varepsilon h)-\phi(x)\right) p\int_{\R_+}b(x,a)
\widehat{m}(x,a)da (1-z_0(x+\varepsilon
h,x))k(x,h)dh.\end{equation}

\noindent When $\varepsilon\rightarrow 0$, the sequence of such renormalized
TSS-processes converges to the solution of an ODE that generalizes
the Canonical Equation introduced by Diekmann and Law
\cite{dieckmannlaw}:

\begin{theorem}\label{theoremequationcanoniquefoncionnelle}
Under Assumptions \ref{hypothesetaux},
\ref{hypsolutionstationnaire}, \ref{dimstan}, the sequence
$(X^{\varepsilon})_{\varepsilon>0}$ converges in probability, for
the Skorohod topology on $\mathbb{D}\left(\R_+,\X\right)$ to the
solution of the following ODE:
\begin{equation}\label{equationcanonique}
\frac{dx}{dt}=p\int_{\R_+}b(x,a)\widehat{m}(x,a)da\int_{\R^d} h
\,D_h^1 z_0(x,x)\, k(x,h)dh,
\end{equation}\begin{align*}
\mbox{where }\,\,\,\frac{dx}{dt}=\lim_{h\rightarrow 0}\frac{|x(t+h)-x(t)|}{h},\quad
\mbox{ and }\quad D_{h}^{1}
z_0(x,x):=\lim_{\begin{array}{c}\varepsilon\rightarrow 0\\
\varepsilon>0\end{array}}\frac{z_0(x,x)-z_0(x+\varepsilon
h,x)}{\varepsilon}.\end{align*}
\end{theorem}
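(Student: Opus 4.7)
The plan is to pass to the limit in the martingale formulation associated with the generator $L^\varepsilon$ of (\ref{generateursautreormalise}), in the spirit of standard Markov chain--to--ODE limit theorems (Ethier--Kurtz type). For every $\phi\in\Co^1(\X,\R)$, the process
$$M^{\varepsilon,\phi}_t := \phi(X^\varepsilon_t)-\phi(X^\varepsilon_0)-\int_0^t L^\varepsilon\phi(X^\varepsilon_s)\,ds$$
is a martingale. I will (i) prove pointwise and uniform convergence $L^\varepsilon\phi\to L^0\phi$ where $L^0\phi(x)=\nabla\phi(x)\cdot F(x)$ with $F$ the drift of (\ref{equationcanonique}), (ii) establish tightness of $(X^\varepsilon)_{\varepsilon>0}$ in $\mathbb{D}(\R_+,\X)$, (iii) identify any limit point as the solution of (\ref{equationcanonique}) and conclude convergence in probability via uniqueness.

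For step (i), fix $x\in\X$ and $h\in\R^d$. Taylor's formula gives $\phi(x+\varepsilon h)-\phi(x)=\varepsilon h\cdot\nabla\phi(x)+O(\varepsilon^2|h|^2\|\nabla^2\phi\|_\infty)$. For the fitness factor, Proposition \ref{proprappelg}(iii) and (\ref{introductiong}) give $z_0(x,x)=1$ and $1-z_0(y,x)=[1-g(y,x)]_+$ with $g\in\Co^1$. Hence
$$\frac{1-z_0(x+\varepsilon h,x)}{\varepsilon}=\frac{z_0(x,x)-z_0(x+\varepsilon h,x)}{\varepsilon}\xrightarrow[\varepsilon\to 0^+]{}D^1_h z_0(x,x)=\bigl[-h\cdot\nabla_y g(x,x)\bigr]_+,$$
bounded by $|h|\|\nabla g\|_\infty$. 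Multiplying the two expansions, the factor $1/\varepsilon^2$ in $L^\varepsilon$ cancels. Dominated convergence, using $\int_{\R^d}\sup_x k(x,h)\,dh<+\infty$ from Assumption \ref{hypothesetaux} and the compactness of $\X$ (which bounds $|h|$ on the support of $k(x,\cdot)$), justifies passing to the limit inside the integral. The limit is precisely $L^0\phi(x)$; the same estimates give $\sup_{\varepsilon>0}\|L^\varepsilon\phi\|_\infty<+\infty$ and uniform convergence on $\X$.

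For step (ii), tightness of $(X^\varepsilon)_{\varepsilon>0}$ in $\mathbb{D}(\R_+,\X)$ is immediate from compactness of $\X$ for the containment condition, combined with the Aldous--Rebolledo criterion applied to $\phi(X^\varepsilon)$: the uniform bound on $L^\varepsilon\phi$ controls the finite variation part, and the predictable quadratic variation of $M^{\varepsilon,\phi}$ is of order $\varepsilon$ (it scales like $\varepsilon^{-2}\cdot\varepsilon^2\cdot\varepsilon=\varepsilon$ after absorbing one $\varepsilon$ from the Taylor expansion of $\phi$ squared and one $\varepsilon$ from $1-z_0$), and so vanishes. In step (iii), passing to the limit in the martingale problem, any limit point $X$ satisfies $\phi(X_t)-\phi(X_0)-\int_0^t L^0\phi(X_s)\,ds=0$ for all $\phi\in\Co^1(\X,\R)$, which is the weak formulation of (\ref{equationcanonique}). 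If $F$ is locally Lipschitz, the Cauchy--Lipschitz theorem gives a unique solution starting from the limiting initial condition, yielding convergence in probability (not only in law) to this deterministic curve.

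The main obstacle is the regularity of the drift $F(x)=p\int_{\R_+}b(x,a)\widehat{m}(x,a)da\int_{\R^d}h\,[-h\cdot\nabla_y g(x,x)]_+\,k(x,h)dh$. Because of the positive part, $h\mapsto D^1_h z_0(x,x)$ is only positively homogeneous, not linear, and a priori $F$ is merely Lipschitz rather than $\Co^1$; one must verify (using $g\in\Co^1$, Lipschitz dependence of $k(x,\cdot)$ in $x$ by Assumption \ref{hypothesetaux}, and regularity of $\widehat{m}(x,a)$ inherited from the implicit equation (\ref{formesolutionsstationnaires})--(\ref{eqbalance})) that $F$ is indeed Lipschitz continuous on $\X$. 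This secures uniqueness of (\ref{equationcanonique}) and closes the argument.
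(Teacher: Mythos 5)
Your proposal is correct and takes essentially the same route as the paper, which establishes this theorem by exactly the tightness--uniqueness (martingale problem) argument adapted from Theorem 1 of Champagnat \cite{champagnat}: the cancellation of the $1/\varepsilon^2$ factor against the first-order Taylor term of $\phi$ and the first-order vanishing of $1-z_0(x+\varepsilon h,x)$ (via $z_0(x,x)=1$ and the $\Co^1$ function $g$ of Lemma \ref{g}), the vanishing quadratic variation, and the identification of limit points all match. The one point you rightly flag as needing verification --- Lipschitz continuity of the drift despite the positive part in $D^1_h z_0(x,x)=[-h\cdot\nabla_1 g(x,x)]_+$ --- is indeed the ingredient securing uniqueness, and it follows from the $\Co^1$ regularity of $g$ together with Assumption \ref{hypothesetaux}.
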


\noindent The Proof of Theorem \ref{theoremequationcanoniquefoncionnelle},
based on a tightness-uniqueness argument,  can be adapted from
Theorem 1 in Champagnat \cite{champagnat}.

\bigskip

\noindent Let us remark that $D_h^1z_0(x,x)$ is  nonnegative since
$z_0(x+\varepsilon h, x)\in [0,1]$ and  $z_0(x,x)=1$ by Proposition \ref{proprappelg}. Using (\ref{introductiong}), one gets
\begin{equation}
D_h^1z_0(x,x)=\left[\lim_{{\scriptsize\begin{array}{c}\varepsilon\rightarrow 0\\
\varepsilon>0\end{array}}}\frac{1-g(x+\varepsilon
h,x)}{\varepsilon}\right]_+.
\label{gradientfitness}\end{equation}Hence, as in the classical
case, the evolution follows the directions
 where the  fitness gradient $D_h^1z_0(x,x)$ is positive, and along which the extinction probability   $y\mapsto z_0(y,x)$ is a decreasing function.


\noindent Although $z_0$ is implicitly defined, an
explicit expression of $D_h^1z_0(x,x)$ can be
established.
\begin{prop} Let us consider the scalar case $d=1$ for sake of
simplicity. Under
Assumptions \ref{hypothesetaux}, \ref{hypsolutionstationnaire},
\ref{dimstan}, for $h>0$,  $D_1^hz_0(x,x)= h
[-\partial_1g(x,x)]_-$ and for $h<0$, $D_1^hz_0(x,x)= h
[-\partial_1g(x,x)]_+$, with $g$ defined in Lemma \ref{g}, and
\begin{multline}
\partial_1 g(x,x)=-\int_{\R_+} \left(\partial_1b(x,a)-b(x,a)
\int_0^a \left(\partial_1d(x,\alpha)+\int_{\R_+}\partial_1
U((x,\alpha),(x,u))\widehat{m}(x,u)du\right)d\alpha\right)\\
\times e^{-\int_0^a \widehat{d}(x,\alpha,x)d\alpha}da
\times \left(  \int_0^{+\infty} b(x,a)\left(\int_0^a b(x,\alpha)d\alpha \right)
e^{-\int_0^a \widehat{d}(x,\alpha,x) d\alpha}da\right)^{-1}.\label{resultatgradientfitness}
\end{multline}
\end{prop}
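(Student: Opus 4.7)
The plan is to derive \eqref{resultatgradientfitness} from the Implicit Function Theorem applied to the relation $F(g(y,x),y,x)=0$ from Lemma~\ref{g}, where $F$ is defined in \eqref{equationdeterminationsurvie3}, and then to evaluate everything at $y=x$ (where $g(x,x)=1$).

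The preliminary one-sided derivative formulas for $D_1^h z_0(x,x)$ follow immediately from \eqref{introductiong} and the $\Co^1$ regularity of $g$ established in Lemma~\ref{g}. Using $g(x,x)=1$ together with the expansion $g(x+\varepsilon h,x)=1+\varepsilon h\,\partial_1g(x,x)+o(\varepsilon)$ as $\varepsilon\to 0^+$, one has
\[
\frac{z_0(x,x)-z_0(x+\varepsilon h,x)}{\varepsilon}=\frac{\bigl[1-g(x+\varepsilon h,x)\bigr]_+}{\varepsilon}\ \xrightarrow[\varepsilon\to 0^+]{}\ \bigl[-h\,\partial_1g(x,x)\bigr]_+,
\]
from which the two stated formulas split according to the sign of $h$.

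For the main identity, I would differentiate $F(g(y,x),y,x)=0$ with respect to $y$ and evaluate at $y=x$, so that
\[
\partial_1g(x,x)=-\frac{\partial_2F(1,x,x)}{\partial_zF(1,x,x)}.
\]
Differentiation under the integral sign is justified by Assumption~\ref{hypothesetaux} (the $\Co^1$ and boundedness hypotheses on $b$, $d$, $U$) together with the exponential decay of the integrand in $a$ coming from $\underline{d}>0$; the denominator is strictly positive as in the proof of Lemma~\ref{g}. Differentiating the factor $(z-1)\int_0^ab(y,u)du$ in the exponent of $F$ and evaluating at $z=1$, $y=x$ produces directly the denominator in \eqref{resultatgradientfitness}. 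For $\partial_2F$, differentiation in $y$ picks up $\partial_1b(y,a)$ from the prefactor and $-\int_0^a\partial_1\widehat{d}(y,u,x)du$ from the exponent, the cross-term from $(z-1)\int_0^a\partial_1b(y,u)du$ vanishing at $z=1$.

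The one point requiring care is the computation of $\partial_1\widehat{d}(x,\alpha,x)$. Since
\[
\widehat{d}(y,u,x)=d(y,u)+\int_{\R_+}U((y,u),(x,\beta))\widehat{m}(x,\beta)\,d\beta
\]
and the equilibrium age profile $\widehat{m}(x,\cdot)$ is parameterized by the resident trait $x$ and not by the mutant trait $y$, only the first trait argument of $U$ is differentiated, giving
\[
\partial_1\widehat{d}(x,\alpha,x)=\partial_1d(x,\alpha)+\int_{\R_+}\partial_1U((x,\alpha),(x,u))\widehat{m}(x,u)\,du.
\]
Substituting this back into the numerator reproduces \eqref{resultatgradientfitness} exactly. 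The main obstacle is thus bookkeeping---correctly identifying which $y$-dependencies must be differentiated and which are held fixed because they are encoded through the resident $x$---rather than any substantive analytical difficulty.
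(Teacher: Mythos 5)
Your proof is correct and follows essentially the same route as the paper: the paper's "expand $F(g(x+\varepsilon,x),x+\varepsilon,x)=0$ to first order in $\varepsilon$ and identify the coefficient of $\varepsilon$" is exactly your implicit-differentiation identity $\partial_1g(x,x)=-\partial_2F(1,x,x)/\partial_zF(1,x,x)$, and your bookkeeping (the cross-term vanishing at $z=1$, and $\partial_1\widehat{d}$ hitting only the first argument of $U$ since $\widehat{m}(x,\cdot)$ is pinned to the resident trait) matches \eqref{resultatgradientfitness}. Your limit $[-h\,\partial_1g(x,x)]_+$ for the one-sided derivative is consistent with \eqref{gradientfitness} and with the remark following the proposition, which is the correct reading of the statement's $[\cdot]_\pm$ notation.
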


\begin{proof}We replace $y$ by $x+\varepsilon$ in (\ref{equationdeterminationsurvie3})
and consider the expansion with respect to $\varepsilon$, using:
\begin{align*}& b(x+\varepsilon,a)=  b(x,a)+\varepsilon \partial_1 b(x,a)+o(\varepsilon),\\
& d(x+\varepsilon ,a)=  d(x,a)+\varepsilon  \partial_1 d(x,a)+o(\varepsilon),\\
& U((x+\varepsilon,a),(x,\alpha))= U((x,a),(x,\alpha))+\varepsilon
\partial_1 U((x,a),(x,\alpha))+o(\varepsilon),\\
& g(x+\varepsilon ,x)=1+\varepsilon  \partial_1
g(x,x)+o(\varepsilon).\end{align*} Identifying the terms of order
$\varepsilon$, we obtain (\ref{resultatgradientfitness}).
\end{proof}

\begin{rque}
(i)  When $\partial_1 g(x,x)>0$, $D_1^1 z_0(x,x)=0$ and $D_{-1}^1 z_0(x,x)=\partial_1 g(x,x)$.
 When $\partial_1 g(x,x)<0$, $D_1^1 z_0(x,x)=-\partial_1g(x,x)$ and $D_{-1}^1 z_0(x,x)=0$.\\
(ii) In the scalar case without age-structure,
(\ref{resultatgradientfitness}) allows us to recover the
expression of the classical fitness gradient:
\begin{align*}
D_1^1z_0(x,x)= & \left[\frac{b'(x)(d(x)+U(x,x)\widehat{M}_x)-b(x)(\partial_1 d(x)+
\partial_2U(x,x)\widehat{M}_x)}{b(x)^2}\right]_+\nonumber\\
= & \left[\left.\partial_y\left(\frac{b(y)-(d(y)+U(y,x)\widehat{M}_x)}
{b(y)}\right)\right|_{y=x}\right]_+.
\end{align*}
\end{rque}

\section{Example 1}\label{sectionexemple}

We now present several examples for which we specify the adaptive dynamics approximations and study their behavior. We used \textbf{R} for simulations which illustrate our purpose, and {\sc Maple} for formal calculus, when computation becomes too technical.

\subsection{A logistic age and size-structured population}

We develop in this section a simple example highlighting the
difficulties that appear when considering a trait and age-structured population and for which
computations can be carried explicitly. The birth and death rates that are used have been specified and explained in (\ref{tauxnaissanceexemple}) and (\ref{tauxmortexemple}). Compared with models with only trait structure, the specificity here lies in the introduction of a senescence term. The latter, even if it is simple, introduces a notion of \textit{life history} that will have an effect on the traits selected through evolution.

\subsubsection{Monomorphic equilibrium in large populations}

\noindent The large population approximation is given by Proposition \ref{propconvergence1chap4}:
\begin{align}\frac{\partial m}{\partial t}(x,a,t) +& \frac{\partial m}{\partial a}(x,a,t) =
- \left(\frac{1}{4}+0.001(4-x)\int_{\R_+} m(x,\alpha,t)d\alpha\right)m(x,a,t)\nonumber\\
m(x,0,t) = & \int_0^{+\infty} x(4-x)e^{- a}m(x,a,t)da.\label{ex1pde}
\end{align}From (\ref{cssurvie}), there exists a non trivial stationary solution if and only if:
\begin{align}
R_0(x)=\int_0^{+\infty}x(4-x)e^{-5a/4}da>1  \ \Leftrightarrow  & \ \frac{4x(4-x)}{5}>1\nonumber\\
 \Leftrightarrow &  \ x\in \left]2-\frac{\sqrt{11}}{2},2+\frac{\sqrt{11}}{2}\right[\approx ]0.35,3.65[.
 \label{ex1condviabilite}
\end{align}
Under (\ref{ex1condviabilite}), Assumption
\ref{hypsolutionstationnaire} is satisfied.

\begin{prop}\label{propsolstationex1}If (\ref{ex1condviabilite}) is satisfied, then (\ref{ex1pde}) admits
 a unique nontrivial stationary solution
\begin{equation}
\widehat{m}(x,a)=\frac{\left(x(4-x)-5/4\right)\left(x(4-x)-1\right)}{0.001(4-x)}
\exp\left(-(x(4-x)-1)a\right).\label{ex1solutionstationnontriviale}
\end{equation}
\end{prop}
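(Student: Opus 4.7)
The plan is to search for stationary solutions of (\ref{ex1pde}) in the exponential form (\ref{formesolutionsstationnaires}), to solve the balance equation (\ref{eqbalance}) for the total mass $\widehat{M}_x$, and then to check positivity of the resulting expression under (\ref{ex1condviabilite}).

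\medskip

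\noindent\textbf{Step 1 (Structure of stationary solutions).} At stationarity, $\partial_t m \equiv 0$, so $\widehat{m}(x,\cdot)$ solves
\begin{equation*}
\frac{d\widehat{m}}{da}(x,a) = -\Bigl(\tfrac{1}{4}+0.001(4-x)\widehat{M}_x\Bigr)\widehat{m}(x,a),
\end{equation*}
with $\widehat{M}_x=\int_0^{+\infty}\widehat{m}(x,\alpha)d\alpha$. A key simplification specific to the logistic example is that the death rate $\widehat{d}(x,a,x)=\tfrac{1}{4}+0.001(4-x)\widehat{M}_x$ is \emph{independent of $a$}; denote this constant by $\mu=\mu(x)$. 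Then (\ref{formesolutionsstationnaires}) reduces to
\begin{equation*}
\widehat{m}(x,a)=\widehat{m}(x,0)\,e^{-\mu a},\qquad \widehat{M}_x=\widehat{m}(x,0)/\mu.
\end{equation*}

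\noindent\textbf{Step 2 (Balance equation fixes $\mu$).} Inserting this ansatz into the balance condition (\ref{eqbalance}), or equivalently into the boundary condition of (\ref{ex1pde}), yields
\begin{equation*}
1=\int_0^{+\infty}x(4-x)e^{-a}e^{-\mu a}da=\frac{x(4-x)}{1+\mu},
\end{equation*}
so that $\mu=x(4-x)-1$. Under (\ref{ex1condviabilite}) one has $x(4-x)>5/4>1$, so $\mu>0$.

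\noindent\textbf{Step 3 (Solving for $\widehat{M}_x$ and the prefactor).} Equating the two expressions for $\mu$ gives
\begin{equation*}
\widehat{M}_x=\frac{\mu-\tfrac{1}{4}}{0.001(4-x)}=\frac{x(4-x)-\tfrac{5}{4}}{0.001(4-x)},
\end{equation*}
which is strictly positive exactly on the viability interval (\ref{ex1condviabilite}). Then $\widehat{m}(x,0)=\mu\widehat{M}_x$ yields the announced formula (\ref{ex1solutionstationnontriviale}).

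\noindent\textbf{Step 4 (Uniqueness).} Any nontrivial stationary solution must have the exponential shape of Step 1 with the constant $\mu$ determined by Step 2, and the normalization of Step 3. Hence (\ref{ex1solutionstationnontriviale}) is the unique nontrivial stationary solution.

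\medskip

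There is no real obstacle here: the calculation is algebraically short because the logistic interaction renders the death rate independent of age, so the balance equation (\ref{eqbalance}) collapses to the scalar equation $x(4-x)=1+\mu$. The only point that requires a moment of care is verifying that both $\mu>0$ and $\widehat{M}_x>0$ hold on the viability interval, which is automatic since $x(4-x)>5/4$ implies $x(4-x)>1$.
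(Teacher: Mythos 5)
Your proof is correct and follows essentially the same route as the paper's: both reduce to the exponential ansatz (\ref{formesolutionsstationnaires}) with an age-independent death rate, use the boundary/balance condition to pin down $\widehat{M}_x=(x(4-x)-5/4)/(0.001(4-x))$ (equivalently $\mu=x(4-x)-1$), and recover $\widehat{m}(x,0)=\mu\widehat{M}_x$. The only difference is cosmetic — you solve for $\mu$ first and then back out $\widehat{M}_x$, while the paper works with $\widehat{M}_x$ directly — and your added positivity check under (\ref{ex1condviabilite}) is a harmless refinement.
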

\begin{proof}
Any stationary solution is of the form
$\widehat{m}(x,a)=\widehat{m}(x,0)\exp\left(-(1/4+0.001(4-x)
 \widehat{M}_x)a\right)$ by (\ref{formesolutionsstationnaires}). Plugging this expression into (\ref{boundarycondition}) gives
\begin{align}
\widehat{m}(x,0) =&   \widehat{m}(x,0)\int_0^{+\infty} x(4-x)e^{-\left(\frac{5}{4}+0.001(4-x)
 \widehat{M}_x\right)a}da= \widehat{m}(x,0)\frac{x(4-x)}{\frac{5}{4}+0.001(4-x)\widehat{M}_x}.\label{equilibrepopres}
\end{align}Since we are looking for a nontrivial solution, $\widehat{m}(x,0)\not=0$ and necessarily:
\begin{equation}
\widehat{M}_x=\frac{x(4-x)-5/4}{0.001(4-x)}.\label{ex1widehatm}
\end{equation}The definition of
$\widehat{M}_x$ implies $\widehat{M}_x=\widehat{m}(x,0)/(1/4+0.001(4-x)\widehat{M}_x)$ which gives $\widehat{m}(x,0)$.
\end{proof}

\subsubsection{Invasibility}\label{sectionex1invasibility}

\noindent With our choice of competition kernel, Assumptions \ref{dimstan} are satisfied.
 Let us consider the invasion phenomena. An explicit expression can not be obtained for $z_0(y,x)$,
 but we can compute it numerically (see Figure \ref{figtracef}), which allows us to simulate the
 paths of the TSS process (see Figure \ref{figurebandelettes}). Even if $z_0(y,x)$ remains implicit, the study of the domain of invasibility, that consists in pairs $(x,y)$ of traits $y$ with positive fitness in a resident population of trait $x$, can be carried explicitly. It brings information on the long time behavior of the Age structured TSS process.

\begin{prop}
Let $x\in [0,4]$ satisfy (\ref{ex1condviabilite}). The mutant
traits $y$ which  can invade the monomorphic resident
 population with trait $x$ belong to $]\min(x,f(x)),\max(x,f(x))[$, where:
\begin{equation}
f(x):=4-\frac{5/4}{4-x}.\label{fexemple}\end{equation}\end{prop}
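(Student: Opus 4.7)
The plan is to use the fitness criterion of Proposition \ref{proprappelg} together with the explicit form of the monomorphic equilibrium $\widehat{m}(x,a)$ from Proposition \ref{propsolstationex1}. A mutant trait $y$ invades the resident population of trait $x$ (with $x$ satisfying (\ref{ex1condviabilite})) if and only if $z_0(y,x) < 1$, i.e., by Proposition \ref{proprappelg}(i),
$$\int_0^{+\infty} b(y,a)\, e^{-\int_0^a \widehat{d}(y,u,x)\,du}\,da > 1.$$

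The first real step is to compute $\widehat{d}(y,a,x)$ explicitly. Because in Example 1 the natural death rate $d(y,a)=1/4$ does not depend on age and the competition kernel is the density-dependent logistic term $U((y,a),(x,\alpha))=0.001(4-y)$ (independent of both $a$ and $\alpha$), formula (\ref{simplifmortgelee}) combined with the expression (\ref{ex1widehatm}) for $\widehat{M}_x$ gives
$$\widehat{d}(y,a,x) = \frac{1}{4} + 0.001(4-y)\widehat{M}_x = \frac{1}{4} + \frac{(4-y)\bigl(x(4-x)-5/4\bigr)}{4-x},$$
independent of $a$. Calling this constant $\widehat{d}(y,x)$ and using $b(y,a)=y(4-y)e^{-a}$, the integral collapses to $y(4-y)/(1+\widehat{d}(y,x))$, so the invasion condition reads $y(4-y) > 1 + \widehat{d}(y,x)$, or equivalently, after multiplying by $4-x>0$,
$$(4-x)\bigl(y(4-y) - 5/4\bigr) > (4-y)\bigl(x(4-x) - 5/4\bigr).$$

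The remaining step is purely algebraic. The key identity is
$$(4-x)\,y(4-y) - (4-y)\,x(4-x) = (4-x)(4-y)(y-x),$$
so the whole inequality factors as
$$(y-x)\Bigl[(4-x)(4-y) - \tfrac{5}{4}\Bigr] > 0.$$
The bracket vanishes precisely when $(4-y)=\frac{5/4}{4-x}$, i.e.\ at $y = f(x) = 4 - \frac{5/4}{4-x}$, and is linear decreasing in $y$ with slope $-(4-x)<0$; hence it has the sign of $f(x)-y$. The invasion condition becomes $(y-x)(f(x)-y)>0$, which is exactly $y\in\,]\min(x,f(x)),\max(x,f(x))[$. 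I do not expect any genuine obstacle here: the essential simplification is that $\widehat{d}(y,a,x)$ is constant in $a$ (thanks to the logistic, age-independent form of $U$ and $d$), which reduces the invasion condition to an elementary polynomial inequality that factors cleanly.
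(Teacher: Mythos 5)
Your proof is correct and follows essentially the same route as the paper: both start from the criterion $z_0(y,x)<1\Leftrightarrow\int_0^{+\infty}b(y,a)e^{-\int_0^a\widehat{d}(y,u,x)\,du}\,da>1$ of Proposition \ref{proprappelg} and reduce it to the same factored inequality $(y-x)\bigl[(4-x)(4-y)-\tfrac{5}{4}\bigr]>0$. The only (harmless) difference is that you evaluate the integral explicitly using the age-independence of $\widehat{d}(y,a,x)$, whereas the paper avoids the integration by invoking the balance condition \eqref{eqbalance} to recast the criterion as $\widehat{M}_y>\widehat{M}_x$ before doing the identical algebra.
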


\begin{proof} From Proposition \ref{proprappelg}, we know that $z_0(y,x)<1$
(\textit{i.e.} the invasion of the resident population by the
mutant one is possible) if and only if $\int_0^{+\infty}
b(y,a)e^{-d(y,\widehat{M}_x)a}da>1$. Since
\begin{align}
\int_0^{+\infty} b(y,a)e^{-d(y,\widehat{M}_x)a}da= & \int_0^{+\infty}
b(y,a)e^{-d(y,\widehat{M}_y)a
+\left(d(y,\widehat{M}_y)-d(y,\widehat{M}_x\right)a}da,\label{ex1conditioninvasion}
\end{align}and  $$\int_0^{+\infty} b(y,a)e^{-d(y,\widehat{M}_y)}da=1$$ by the
 balance condition
(\ref{eqbalance}), Equation (\ref{ex1conditioninvasion}) is
satisfied if and only if
 $d(y,\widehat{M}_y)-d(y,\widehat{M}_x)>0$. This is equivalent to:\begin{align}
  \widehat{M}_y-\widehat{M}_x>0
\Leftrightarrow   & \frac{x(4-x)-5/4}{0.001(4-x)}<\frac{y(4-y)-5/4}{0.001(4-y)}\nonumber\\
\Leftrightarrow & x(4-x)(4-y)-\frac{5}{4}(4-y)<y(4-x)(4-y)-\frac{5}{4}(4-x)\nonumber\\
\Leftrightarrow & (x-y)(4-x)(4-y)-\frac{5}{4}(4-y-4+x)<0\nonumber\\
\Leftrightarrow & (x-y)\left[(4-x)(4-y)-\frac{5}{4}\right]<0.\label{ex1condition2}
\end{align}
\noindent \underline{Case 1: if $x>y$} (\ref{ex1condition2}) becomes:
\begin{align*}
(4-x)(4-y)<\frac{5}{4} \Leftrightarrow & y>4-\frac{5/4}{(4-x)},
\end{align*}and hence $f(x)<y<x$, with $f$ defined in (\ref{fexemple}).\\
\noindent \underline{Case 2: if $x<y$}, we obtain with similar
computation that $ x<y< f(x)$.
\end{proof}

\begin{corol}Let $x_0\in [0,4]$ be an initial condition that satisfies (\ref{ex1condviabilite}).
 The TSS process $(X_t)_{t\in \R_+}$ starting from $x_0$ converges almost
 surely to the unique fixed point $x^*$ of $f$ on $[0,4]$ when $t\rightarrow +\infty$.
\end{corol}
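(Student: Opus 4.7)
My approach rests on the monotonicity of $f$ and of the TSS trajectories on each side of $x^*$. First, $f$ is continuous and strictly decreasing on the viability interval since $f'(x) = -(5/4)/(4-x)^2 < 0$, and the equation $f(x) = x$ reduces to $(4-x)^2 = 5/4$, yielding the unique fixed point $x^* = 4 - \sqrt{5}/2 \approx 2.88$ in $[0,4]$ (the other root lies outside the interval). The key sign relation follows: $f(x) > x^*$ whenever $x<x^*$, and $f(x) < x^*$ whenever $x>x^*$. Combined with the previous proposition, this shows that the invasion interval $I(x) = \,]\min(x,f(x)),\max(x,f(x))[\,$ lies strictly \emph{above} $x$ when $x<x^*$ and strictly \emph{below} $x$ when $x>x^*$.

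Since the TSS makes jumps only into $I(x)$ (because $1-z_0(y,x) > 0$ exactly on $I(x)$ by Proposition \ref{proprappelg} and Lemma \ref{g}), the sample path $t \mapsto X_t$ is non-decreasing as long as $X_t \leq x^*$ and non-increasing as long as $X_t \geq x^*$. In particular $(X_t)_{t\geq 0}$ remains in a compact subset of the viability interval, and its trajectory consists of alternating monotone excursions on the two sides of $x^*$, separated by possible crossings. The convergence argument then splits into two cases. If $X_t$ eventually stays on one side of $x^*$ from some random time on, then by monotonicity and boundedness $X_t \to x_\infty$ a.s. Should $x_\infty \neq x^*$, the invasion interval $I(x)$ and the total jump rate $\Lambda(x)$ are both uniformly bounded below on a neighborhood of $x_\infty$, and the distribution of post-jump positions places uniformly positive mass beyond $x_\infty$; by the strong Markov property the process almost surely jumps past $x_\infty$ infinitely often, contradicting monotone convergence to $x_\infty$. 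Hence $x_\infty = x^*$.

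If on the contrary $X_t$ crosses $x^*$ infinitely often, I would apply a Lyapunov argument with $V(x) = (x-x^*)^2$, computing the TSS generator $LV$ and showing $LV(x) \leq 0$ with strict inequality for $x\neq x^*$, then invoking a standard Foster--Lyapunov / martingale convergence criterion to conclude $V(X_t)\to 0$ a.s. The crucial input is that the jump density $y \mapsto (1-z_0(y,x))\,k(x,y-x)$ vanishes at the endpoints of $I(x)$, since at those endpoints $g(y,x)=1$ by definition of the invasion boundary. Using the Taylor expansion of $f$ at $x^*$ (where $f'(x^*) = -1$ and $f''(x^*) = -4/\sqrt{5}$), the interval $I(x)$ has midpoint $x^* + O((x-x^*)^2)$, so the mean post-jump position is contained in $x^* + O((x-x^*)^2)$, yielding an expected contraction of $V$ of order $(x-x^*)^2$.

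The main obstacle is precisely this oscillating regime. The naive pointwise estimate $|X_{\mathrm{next}} - x^*| \leq \max(|X - x^*|,\,|f(X) - x^*|)$ is not strictly contracting, because $f'(x^*) = -1$ gives $|f(x) - x^*| = |x-x^*| + O((x-x^*)^2)$; in particular a trajectory that always hit the extremal endpoint $f(X)$ would not decay to leading order. The argument therefore has to exploit the vanishing of $1 - z_0$ at the endpoints of $I(x)$ (so that jumps are strictly interior, not extremal) together with the specific quadratic geometry of $f$ near $x^*$ in order to extract a genuine strict mean contraction in the Lyapunov estimate.
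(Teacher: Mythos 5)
Your Case 1 (the trajectory eventually stays on one side of $x^*$) is fine, and is in fact more detailed than what the paper writes. The genuine gap is Case 2, which you yourself flag but do not close, and your proposed fix points in a direction that is harder than necessary. The Lyapunov function $V(x)=(x-x^*)^2$ does not obviously satisfy $LV\leq 0$: since $f$ defined in \eqref{fexemple} is concave, it lies below its tangent at $x^*$, so $f(x)\leq 2x^*-x$ for all $x$; for $x>x^*$ this means the invasion interval $]f(x),x[$ strictly \emph{contains} points $y<2x^*-x$ with $|y-x^*|>|x-x^*|$, i.e.\ jumps from above $x^*$ can increase $V$. Whether the mean change is still negative then depends on the detailed jump density $(1-z_0(y,x))k(x,y-x)$, which you do not compute, and the "midpoint of $I(x)$ is $x^*+O((x-x^*)^2)$" heuristic does not control the mean of a non-symmetric density. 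So the oscillating regime, which you correctly identify as the crux, remains open in your argument.

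The observation that dissolves the difficulty — and the one the paper uses — is that $f$ is an involution: a direct computation gives $f\circ f(x)=x$ on $[0,4[$. Consequently, if $y\in\,]\min(x,f(x)),\max(x,f(x))[$, then applying the decreasing map $f$ yields $f(y)\in\,]\min(x,f(x)),\max(x,f(x))[$ as well, so the new invasibility interval satisfies $[\min(y,f(y)),\max(y,f(y))]\subsetneq[\min(x,f(x)),\max(x,f(x))]$ \emph{regardless} of which side of $x^*$ the jump lands on. The correct scalar Lyapunov functional is therefore not $(x-x^*)^2$ but the length $|f(x)-x|$ of the invasion interval, which decreases strictly at every jump; the nested intervals all contain $x^*$ and shrink, handling the monotone and oscillating regimes uniformly. (To see that they shrink to the single point $\{x^*\}$ rather than to a nondegenerate interval $[\alpha,\beta]$ with $f(\alpha)=\beta$, one reuses exactly your Case 1 argument: the jump measure puts mass bounded below on the middle third of the limiting interval, so the process almost surely eventually jumps well into the interior, forcing a definite decrease of the length — a point the paper itself leaves implicit.) In short: your decomposition is workable for the one-sided case, but the two-sided case needs the identity $f\circ f=\mathrm{id}$, not a contraction estimate at the endpoints of $I(x)$.
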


\begin{proof}The function $f$ is a decreasing concave function satisfying
\begin{align}
\forall x\in [0,4[,\, f\circ f(x)= & 4-\frac{5/4}{4-\left(4-\frac{5/4}{4-x}\right)}
=  4-\frac{5/4}{\frac{5/4}{4-x}}=x.\nonumber
\end{align}
Let's remark that $\ f$ has a unique fixed point  in $[0,4]$ given
by
\begin{equation}x^*:=4-\frac{\sqrt{5}}{2}\thickapprox 2.88.\label{xstar}\end{equation}

\noindent If $x<y<f(x)$, then $x=f\circ f(x)<f(y)<f(x)$, and if $x>y>f(x)$
then
 $x=f\circ f(x)>f(y)>f(x)$. Thus, the new
interval of invasible traits
$[\min(y,f(y)),\max(y,f(y))]\subsetneq
[\min(x,f(x)),\max(x,f(x))]$.

\noindent Hence the sequence
$[\min(X_t,f(X_t)),\max(X_t,f(X_t))]$ is almost
  surely a strictly decreasing sequence for the inclusion and it converges to
  $x^*$.
Since a monomorphic population with trait $x^*$ cannot be invaded,
$x^*$ is an Evolutionary Stable Strategy (ESS) in the sense
developed by Diekmann \cite{diekmann}, Metz \textit{et al.}
\cite{metzgeritzmeszenajacobsheerwaarden} and Geritz \textit{et
al.} \cite{geritzmetzkisdimeszena}.
\end{proof}

\begin{figure}[!ht]
\begin{center}\begin{tabular}[!ht]{cc}
(a) & (b)\\
\begin{minipage}[b]{.33\textwidth}
\vspace{0.5cm}
\centering
\includegraphics[width=0.8\textwidth,height=0.20\textheight,
angle=0,trim=1cm 1cm 1cm 1cm]{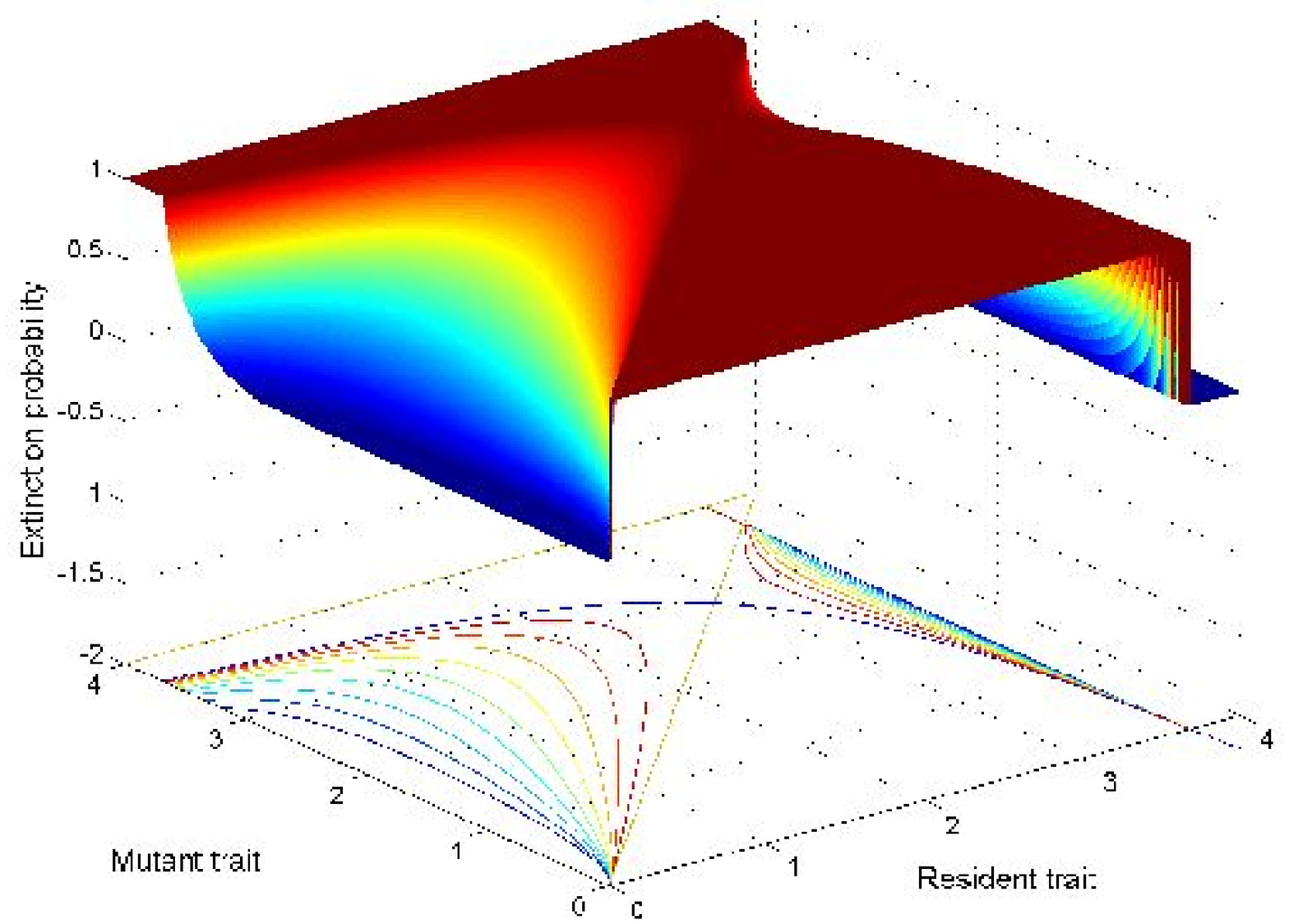}
\end{minipage} &
\begin{minipage}[b]{.33\textwidth}\centering
\includegraphics[width=0.7\textwidth,height=0.15\textheight,
angle=0,trim=1cm 1cm 1cm 1cm]{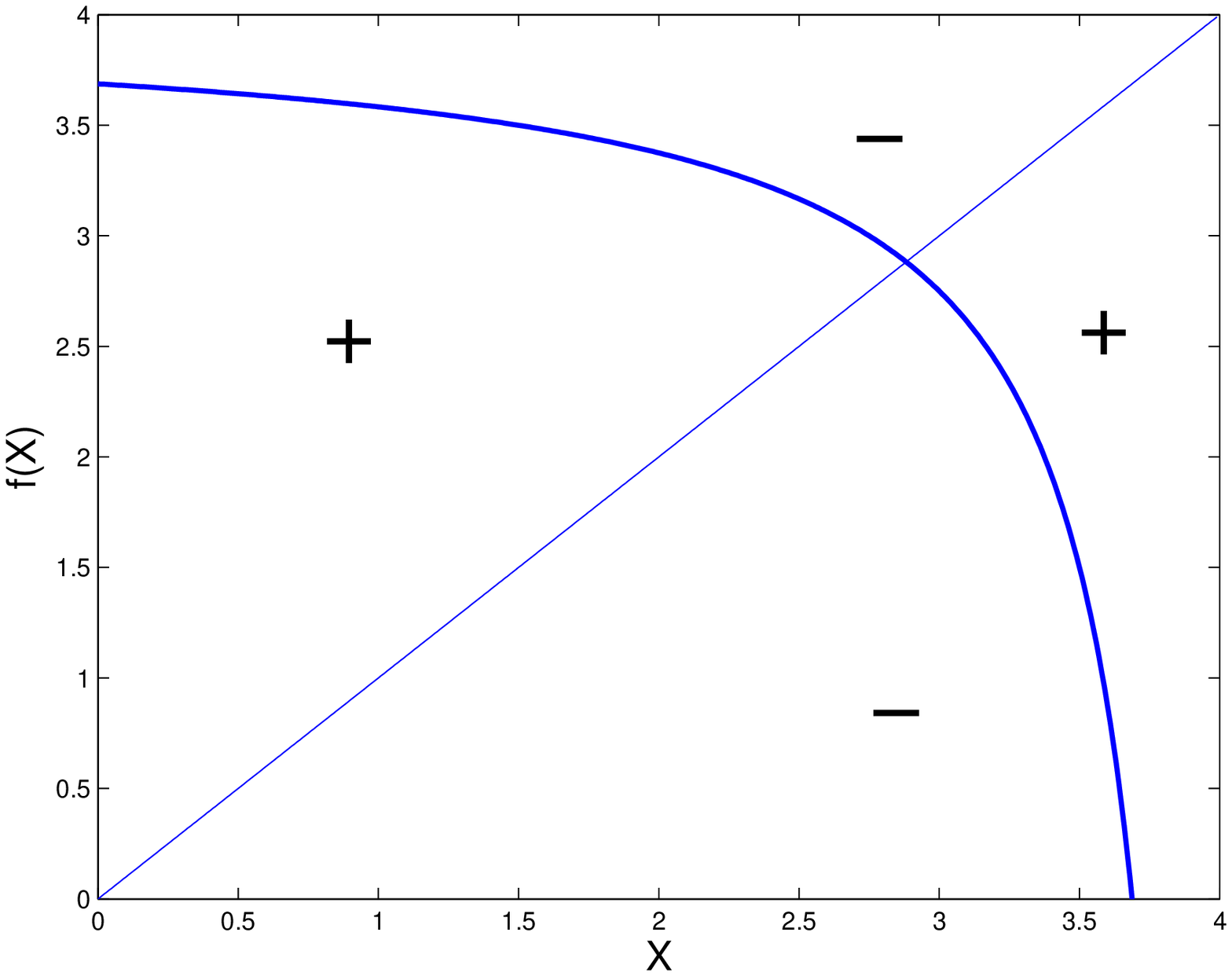}
\end{minipage}
\end{tabular}
\end{center}
\vspace{0cm} \caption{\textit{(a): Probability of extinction as a
function of the resident and mutant traits. For every possible
values of resident and mutant traits on a grid in $[0,4]$, we have
computed the solution of (\ref{equationdeterminationsurvie})
numerically. (b): Graph of function $f$ defined in
(\ref{fexemple}). It gives the regions of the left figure where
$z_0(y,x)=1$ and $z_0(y,x)<1$. This graph is called Pairwise
Invasibility Plot (PIP).}}\label{figtracef}
\end{figure}

\subsubsection{Simulations of the Age-structured TSS}

The \textit{Age-structured TSS} $Z$  defined by \eqref{TSS} is a
measure-valued process  jumping from an equilibrium measure to
another. Each equilibrium measure is characterized by a trait $x$
and by the  density $\widehat{m}(x,a)$ defined in
(\ref{ex1solutionstationnontriviale}). Simulations are given in
Figure \ref{figurebandelettes} below.

\begin{figure}[!ht]
\begin{center}\begin{tabular}[!ht]{cc}
(a) & (b)\\
\begin{minipage}[b]{.33\textwidth}\centering
\vspace{0.3cm}\includegraphics[width=0.8\textwidth,height=0.20\textheight,angle=0,trim=1cm 1cm 1cm 1cm]
{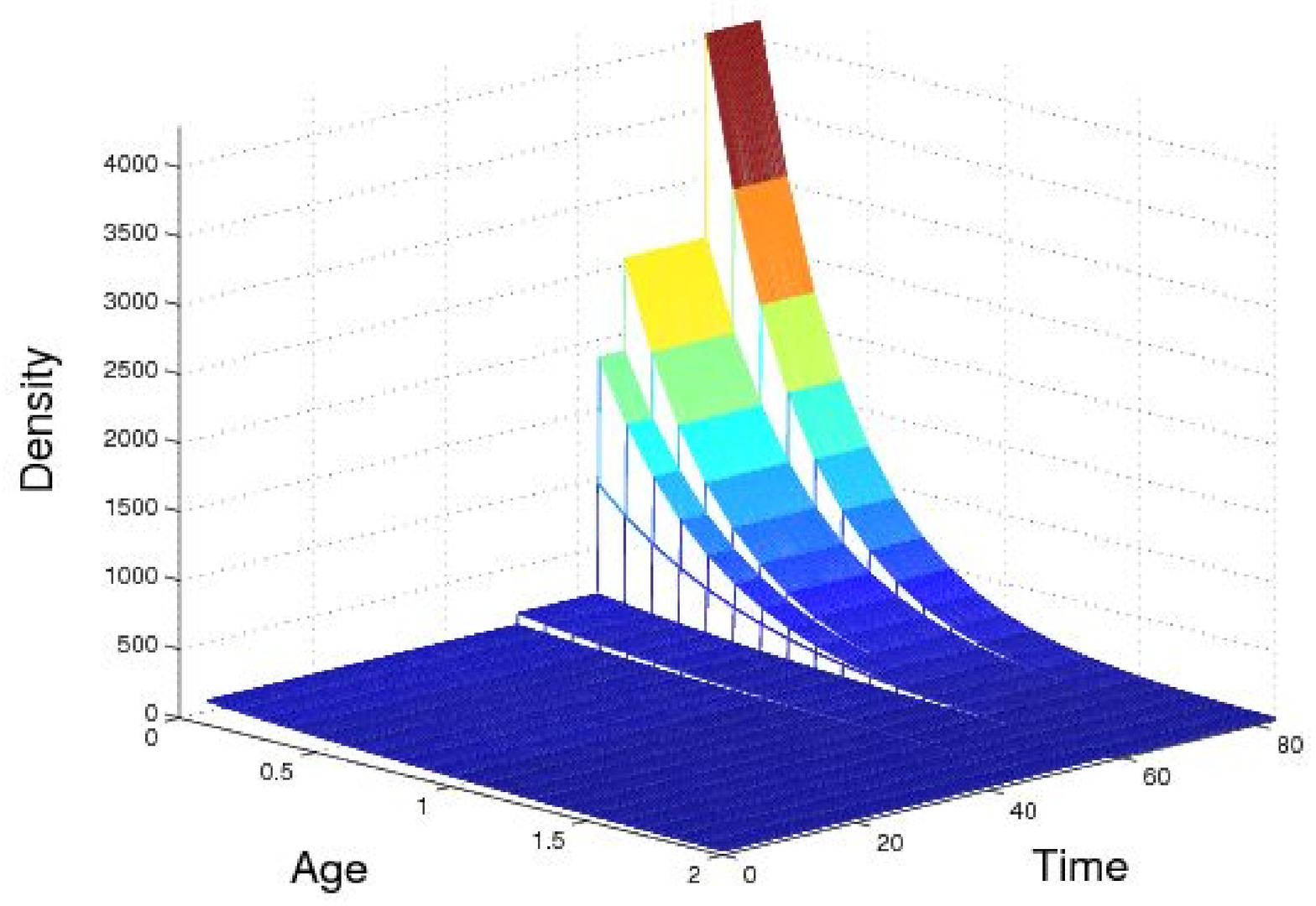}
\end{minipage} &
\begin{minipage}[b]{.33\textwidth}\centering
\includegraphics[width=0.8\textwidth,height=0.20\textheight,angle=0,trim=1cm 1cm 1cm 1cm]{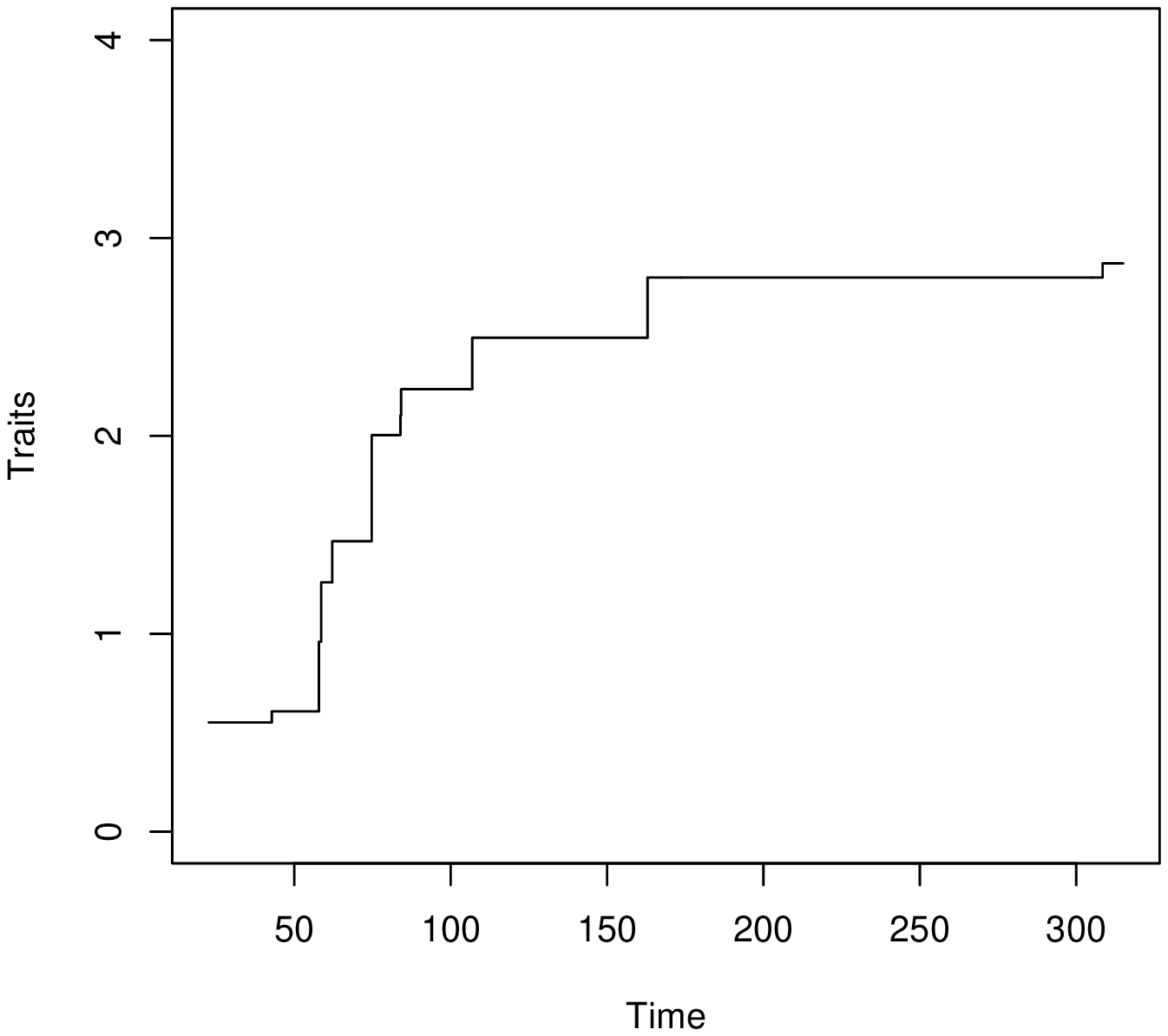}
\end{minipage}\end{tabular}\end{center}
\caption{\textit{Age-structured Trait Substitution Sequence
process: the measure-valued process jumps from one equilibrium
age-distribution to another one. (a): Each strip describes an
equilibrium $\widehat{m}(x,a)$ as age-function. These equilibria are given in (\ref{ex1solutionstationnontriviale}). (b): the
trait-valued process $X$ describing the trait
jumps.}}\label{figurebandelettes}
\end{figure}

\noindent On the Figure \ref{figurebandelettes},
  the initial resident trait is $x_0=0.552$ and the size of the population
   at its equilibrium is $\widehat{M}_{x_0}\thickapprox 189.47$.
In the beginning, successful invasions favours increasing traits and
   $\widehat{M}_{x}$ increases too, as well as the jump rate. For instance,
   after 4
   successful invasions, the trait is approximatively $x=2$ and  $\widehat{M}_2=1375$. When the trait approaches $x^* \thickapprox 2.88$, the interval of
possible invading traits decreases and the extinction
 probability of possible successful mutants tends to one making invasions rare.

\subsubsection{Age-structured Canonical Equation}

When  the mutation step decreases to zero and  time accelerates as
in Section \ref{sectionprezec}, the evolution is described by the
age-structured Canonical Equation \eqref{equationcanonique}.

\begin{prop}The fitness gradient $D_1^1z_0(x,x)$ is obtained from (\ref{gradientfitness})
with:
\begin{equation}
\partial_1 g(x,x)=\frac{(4 x^2  - 32 x + 59)(x^2  - 4 x - 1)}{4x^2(x-4)^3}
\label{fitnessgradientex1}
\end{equation}
and the Canonical Equation has the following explicit form:
\begin{multline}
\frac{dx}{dt}= p \left(\left[\partial_1 g(x,x)\right]_-\int_{\R_+}h^2k(x,h)dh -
 \left[\partial_1g(x,x)\right]_+ \int_{\R_-}h^2k(x,h)dh \right) \\ \times
  \frac{(-x^2+4x-5/4)(-x^2+4x-1)}{0.001(4-x)}=:\varphi(x).\label{canonicalequationexemple1}
\end{multline}
\end{prop}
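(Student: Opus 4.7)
The plan is to specialize the general formula (\ref{resultatgradientfitness}) and the Canonical Equation (\ref{equationcanonique}) to the rates of Example 1, then carry out the resulting elementary computations. Writing $B(x):=x(4-x)$ for brevity, the basic ingredients are $b(x,a)=B(x)e^{-a}$, $\partial_1 b(x,a)=(4-2x)e^{-a}$, $d(x,a)=1/4$ (so $\partial_1 d=0$), and $U((x,a),(y,\alpha))=0.001(4-x)$ (so $\partial_1 U=-0.001$). From Proposition \ref{propsolstationex1} and (\ref{ex1widehatm}) one has $\widehat{d}(x,a,x)=B(x)-1$ and $0.001\widehat{M}_x=(B(x)-5/4)/(4-x)$, while $\int_0^a b(x,\alpha)d\alpha=B(x)(1-e^{-a})$.

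Next I would compute the two integrals in (\ref{resultatgradientfitness}). The denominator becomes $B(x)^2\int_0^{+\infty}(e^{-B(x)a}-e^{-(B(x)+1)a})da=B(x)/(B(x)+1)$. In the numerator, the inner $\alpha$-integral collapses to $-0.001\widehat{M}_x\cdot a$ since only $\partial_1 U$ contributes, so the numerator splits into $\int_0^{+\infty}(4-2x)e^{-B(x)a}da+0.001\widehat{M}_x B(x)\int_0^{+\infty}ae^{-B(x)a}da$, with an overall minus sign. Evaluating the exponential and Gamma-type integrals and using the expression of $0.001\widehat{M}_x$ produces $-[(4-2x)(4-x)+B(x)-5/4]/((4-x)B(x))$. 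A short polynomial simplification reduces the bracket to $x^2-8x+59/4=(4x^2-32x+59)/4$. Taking the ratio and substituting $B(x)+1=-(x^2-4x-1)$ and $B(x)=x(4-x)=-x(x-4)$ yields the claimed expression (\ref{fitnessgradientex1}) for $\partial_1 g(x,x)$.

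For the Canonical Equation, I would use (\ref{gradientfitness}), which gives $D_h^1z_0(x,x)=[-h\,\partial_1 g(x,x)]_+$ in the scalar case. Splitting the integral $\int_\R h\, D_h^1z_0(x,x)\,k(x,h)dh$ according to the signs of $h$ and of $\partial_1 g(x,x)$ shows that exactly one of the half-lines contributes for each sign of $\partial_1 g$, and the identity
\begin{equation*}
\int_\R h\, D_h^1z_0(x,x)\,k(x,h)dh=[\partial_1 g(x,x)]_-\!\int_{\R_+}\! h^2 k(x,h)dh-[\partial_1 g(x,x)]_+\!\int_{\R_-}\! h^2 k(x,h)dh
\end{equation*}
follows. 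The remaining prefactor $p\int_{\R_+}b(x,a)\widehat{m}(x,a)da$ equals $p\,\widehat{m}(x,0)$ by the boundary condition (\ref{boundarycondition}), and from $\widehat{m}(x,0)=\widehat{M}_x(B(x)-1)$, established in the proof of Proposition \ref{propsolstationex1}, together with (\ref{ex1widehatm}), one obtains $\widehat{m}(x,0)=(-x^2+4x-5/4)(-x^2+4x-1)/(0.001(4-x))$. Plugging into (\ref{equationcanonique}) gives (\ref{canonicalequationexemple1}).

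No genuine analytic obstacle is involved; the computation is a chain of elementary integrals and polynomial manipulations. The main practical difficulty lies in the algebraic step where the numerator $(4-2x)(4-x)+B(x)-5/4$ must be identified with $(4x^2-32x+59)/4$ and where $B(x)+1$ must be rewritten as $-(x^2-4x-1)$, so the final expression takes the stated factorized form with $(x-4)^3$ in the denominator. Careful sign bookkeeping here is the only place where an error could slip in.
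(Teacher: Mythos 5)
Your overall strategy is sound and, in substance, identical to the paper's: the paper re-expands (\ref{equationdeterminationsurvie2}) in $\varepsilon$ directly for this example (their equation (\ref{deveqcanon1})), whereas you specialize the general formula (\ref{resultatgradientfitness}); since that formula was itself obtained by exactly the same expansion, the two computations coincide, merely organized differently. Your intermediate steps check out: with $B(x)=x(4-x)$ the denominator is indeed $B(x)/(B(x)+1)$, the numerator is $-\left[(4-2x)(4-x)+B(x)-5/4\right]/\left((4-x)B(x)\right)$, and the bracket equals $(4x^2-32x+59)/4$. The second half (splitting $\int_{\R}h\,D^1_hz_0(x,x)k(x,h)dh$ by the signs of $h$ and of $\partial_1 g$, and identifying the prefactor $p\int_{\R_+} b\,\widehat{m}\,da=p\,\widehat{m}(x,0)=(B(x)-5/4)(B(x)-1)/(0.001(4-x))$ via (\ref{boundarycondition}) and (\ref{ex1widehatm})) is correct and matches the paper's argument.

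The gap is in the last step of the first half. Carrying your own chain of formulas to the end gives
\[
\partial_1 g(x,x)=-\frac{(4x^2-32x+59)/4}{(4-x)B(x)}\cdot\frac{B(x)+1}{B(x)}
=\frac{(4x^2-32x+59)(x^2-4x-1)}{4x^2(4-x)^3}
=-\frac{(4x^2-32x+59)(x^2-4x-1)}{4x^2(x-4)^3},
\]
i.e.\ the \emph{negative} of (\ref{fitnessgradientex1}) as printed. The sign is not cosmetic and can be cross-checked independently. At $x=2$ your computation gives $\partial_1 g(2,2)=-55/128<0$, which is what the invasibility analysis of Section 5.1.2 forces: for $x=2<x^*$ only mutants $y\in\,]2,f(2)[$ with $y>2$ can invade, so $y\mapsto g(y,2)$ must cross below $1$ as $y$ increases through $2$, hence $\partial_1 g(2,2)<0$; equivalently, the fitness gradient $[-\partial_1 g(x,x)]_+$ must be positive for $x<x^*$, as the text asserts. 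A direct implicit-differentiation check, $\partial_1 g(x,x)=-\partial_y F(1,y,x)|_{y=x}\big/\partial_z F(1,x,x)$ with $F(1,y,2)=y(4-y)/(5/4+1.375(4-y))-1$, also yields $-0.34375/0.8=-55/128$. Formula (\ref{fitnessgradientex1}) as written gives $+55/128$ at $x=2$, so it carries a sign error (most plausibly $(x-4)^3$ where $(4-x)^3$ is meant). You therefore cannot conclude that the ratio ``yields the claimed expression'': either you correct the target formula, or you must exhibit the step at which your computation supposedly produces the printed sign. Since you yourself single out sign bookkeeping as the one delicate point, this discrepancy has to be resolved explicitly rather than asserted away.
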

\begin{proof} The fitness $z_0(x+\varepsilon,x)$ is the smallest
solution in $[0,1]$ of  Equation
(\ref{equationdeterminationsurvie2}), which writes
\begin{align}
z-1= & (z-1)\int_0^{+\infty} (x+\varepsilon )(4-x-\varepsilon
)e^{(z-1)(x+\varepsilon ) (4-x-\varepsilon
)(1-e^{-a})-\left(\frac{5}{4}+(4-x-\varepsilon )
\frac{x(4-x)-5/4}{4-x}\right)a}da.\label{deveqcanon1}
\end{align}
For the case where
\begin{eqnarray*}
1&\geq& \int_0^{+\infty}b(x+\varepsilon ,a)e^{-d(x+\varepsilon
,x)a}da=\int_0^{+\infty}(x+\varepsilon ) (4-x-\varepsilon )
e^{-\left(\frac{5}{4}+(4-x-\varepsilon )
\frac{x(4-x)-5/4}{4-x}\right)a}da \nonumber\\
&=& \frac{(x+\varepsilon )(4-x-\varepsilon
)}{\frac{5}{4}+(4-x-\varepsilon ) \frac{x(4-x)-5/4}{4-x}},
\end{eqnarray*} the unique solution is  $z_0(x+\varepsilon ,x)=1$.
Else, (\ref{deveqcanon1})
admits a solution in $[0,1[$. Writing that $g(x+\varepsilon ,x)-1=\varepsilon
  \partial_1 g(x,x)+o(\varepsilon)$ (since $g(x,x)=1$), and expanding the integrand of (\ref{deveqcanon1})
  with respect to $\varepsilon$ gives:
\begin{align*}
1= & 1+ \int_0^{+\infty}\left\{ \left[e^{-x(4-x)a}\left(\partial_1 g(x,x)x^2(4-x)^2(1-e^{-a})
\right.\right.\right.\nonumber\\
- & \left.\left.\left.\left(\frac{5}{4}-x(4-x)\right)xa+4-2x\right)\right]\varepsilon
 +o(\varepsilon)\right\}da\nonumber\\
= & 1+ \left[\frac{1}{4}\frac{\partial_1 g(x,x)\left(4x^5-48x^4+192x^3-256x^2\right)
+48x^3-183x^2+204x-4x^4+59}{x(x-4)^2(x^2-4x-1)}\right]\varepsilon+o(\varepsilon).
\end{align*}The bracket vanishes for $\partial_1 g(x,x)$ given in (\ref{fitnessgradientex1}). From
\begin{align*}
\int_{\R}hD_h^1z_0(x,x)k(x,h)dh= & \int_{\R_+}h^2 D_1^1z_0(x,x)k(x,h)dh+
\int_{\R_-}h|h| D_{-1}^1z_0(x,x)k(x,h)dh\\
= & \int_{\R_+}h^2 \left[-\partial_1 g(x,x)\right]_+
k(x,h)dh-\int_{\R_-}h^2 \left[\partial_1 g(x,x)\right]_+ k(x,h)dh,
\end{align*}
we obtain (\ref{canonicalequationexemple1}).
\end{proof}

\noindent The graph of $x\to \partial_1 g(x,x)$ is drawn in Figure \ref{figeqcanonique} (b). We can verify that
the fitness gradient vanishes at $x^*\thickapprox 2.88$. For
$x<x^*$ the fitness gradient is positive, implying that the size
tends to increase to $x^*$. Similarly, for $x>x^*$,
the fitness gradient is negative and evolution reduces the size to $x^*$.


\begin{figure}[!ht]
\hspace{-1cm}\begin{tabular}[!ht]{ccc}
(a) & (b) & (c) \\
\begin{minipage}[b]{.35\textwidth}\centering
\includegraphics[width=0.75\textwidth,height=0.21\textheight,
angle=0,trim=1cm 1cm 1cm 1cm]{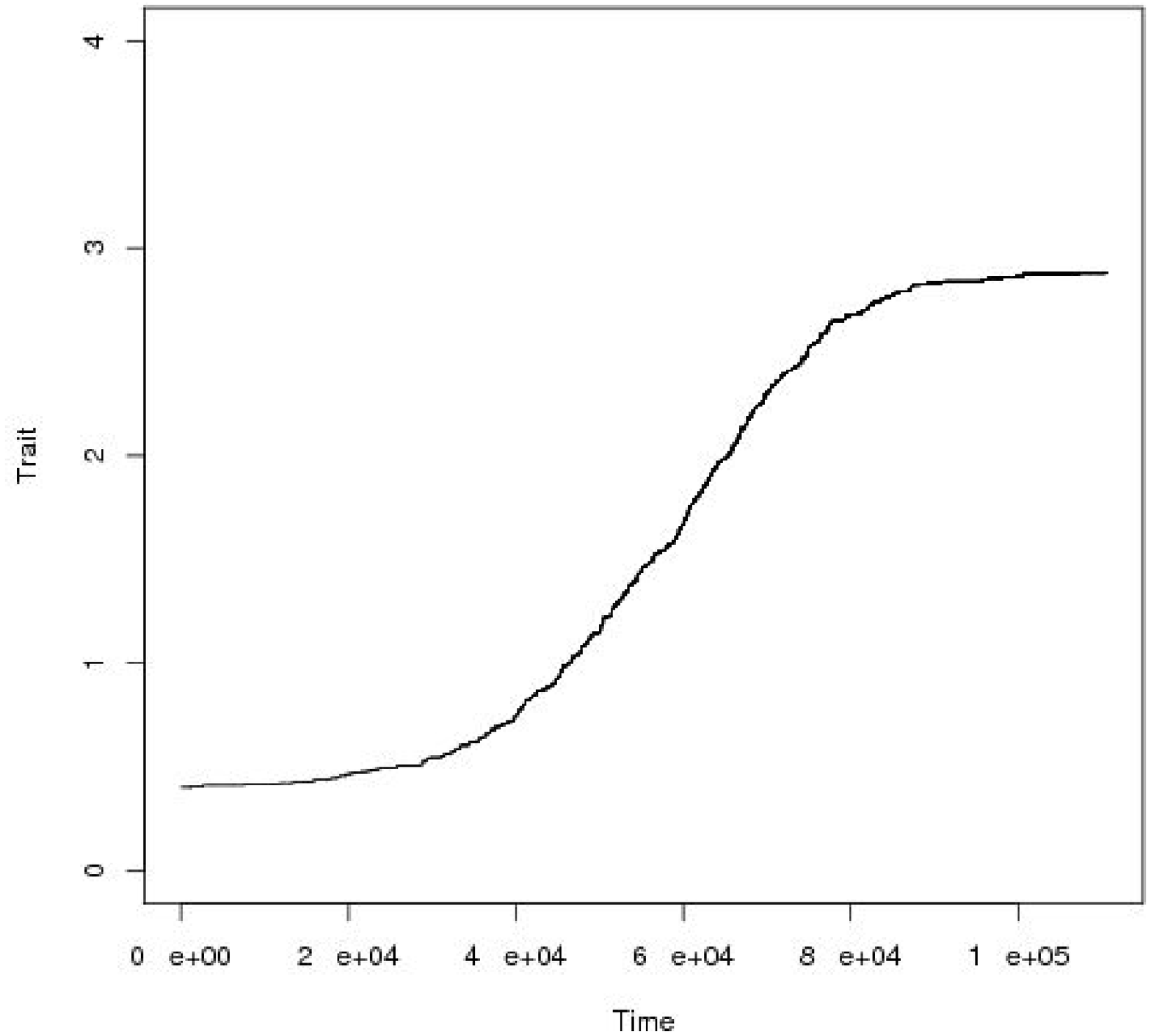}
\end{minipage} &
\begin{minipage}[b]{.31\textwidth}\centering
\vspace{0.5cm}
\includegraphics[width=0.83\textwidth,height=0.18\textheight,
angle=0,trim=1cm 0cm 0cm 1cm]{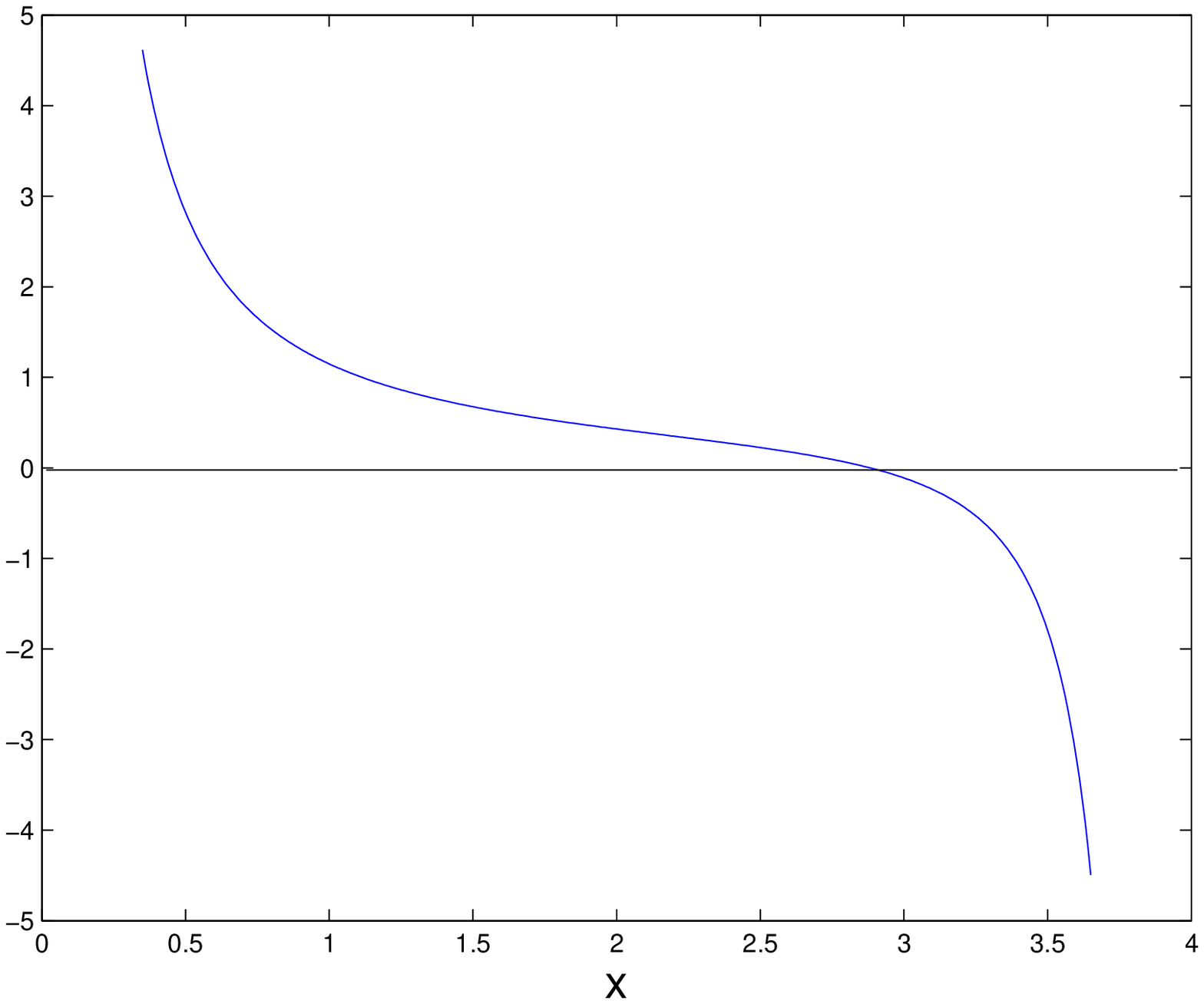}
\end{minipage} &
\begin{minipage}[b]{.31\textwidth}\centering
\vspace{0.5cm}
\includegraphics[width=0.83\textwidth,height=0.18\textheight,
angle=0,trim=1cm 0cm 0cm 1cm]{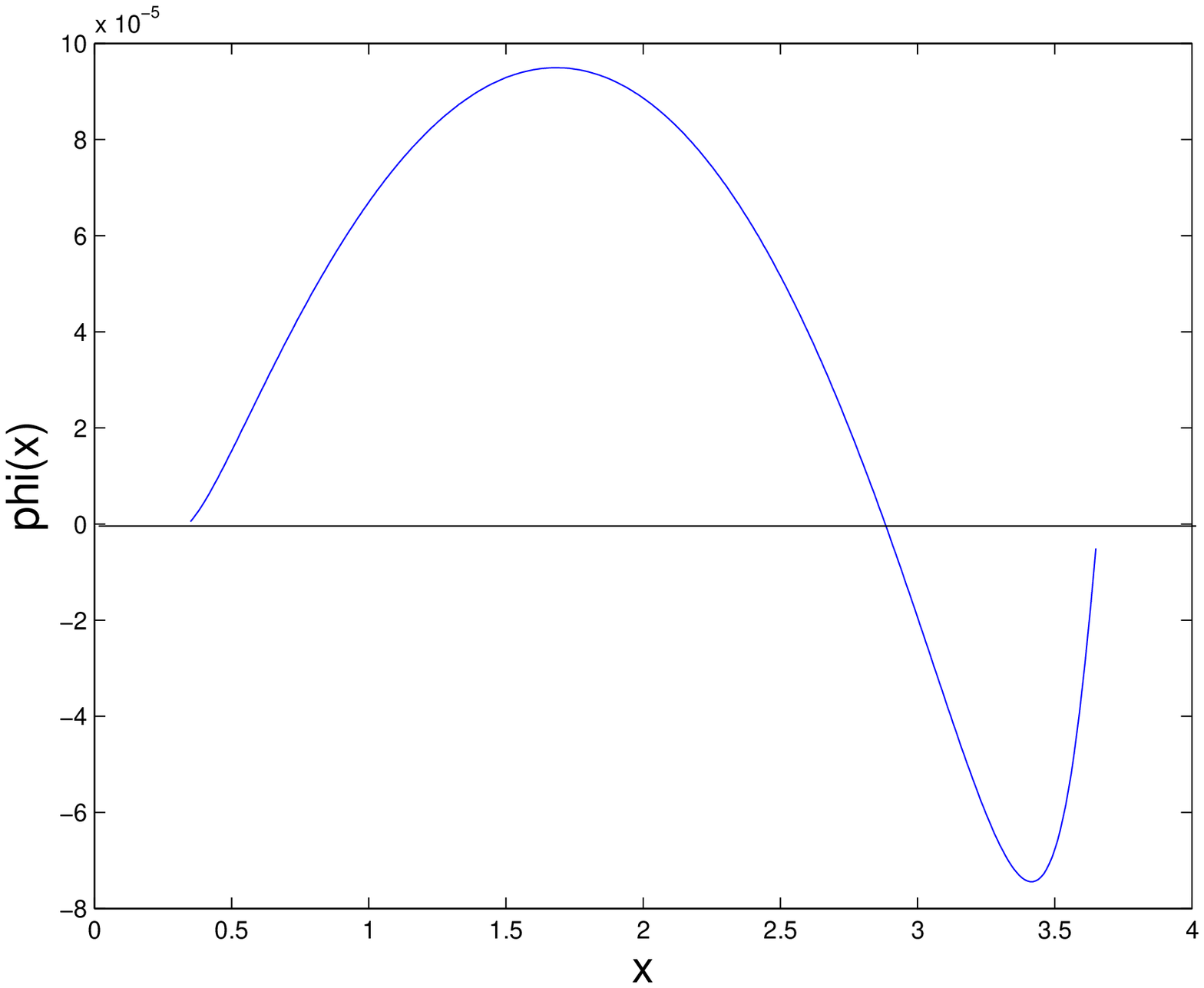}
\end{minipage}
\end{tabular}
\caption{\textit{(a): The Age-structured Canonical Equation,
obtained from the Trait Substitution Sequence process in the limit of small mutation.
(b): Graph of the function $x\mapsto -\partial_1 g(x,x)$
defined in
 (\ref{fitnessgradientex1}). (c): The function $\varphi$ defined in
 (\ref{canonicalequationexemple1}).}}\label{figeqcanonique}
\end{figure}

\subsection{Comparison with a penalized logistic population without age-structure}

We compare the previous example with a logistic population, whithout age-structure, where the birth rate is $b(x)=x(4-x)$ (senescence is absent) and the death rate is $d(x,M)=1/4+0.001(4-x)M$. The large population approximation corresponds to the logistic equation
\begin{equation*}
\frac{dm}{dt}(x,t)=\left(x(4-x)-\frac{1}{4}\right)m(x,t)-0.001(4-x)m^2(x,t),\quad m(x,0)=m_0(x),
\end{equation*}which admits the following unique solution in $\Co^1(\R_+,\R_+)$:
\begin{equation}
m(x,t)=\frac{\left(x(4-x)-\frac{1}{4}\right)m_0(x)e^{(x(4-x)-1/4)t}}{\left(x(4-x)-\frac{1}{4}\right)+0.001(4-x)m_0(x)e^{(x(4-x)-1/4)t}} \stackrel{t\rightarrow+\infty}{\rightarrow}\widehat{m}(x):=\frac{\left[x(4-x)-\frac{1}{4}\right]_+}{0.001(4-x)}.\label{equilibresanssenescence}
\end{equation}
The infinitesimal generator of the TSS process takes here the following explicit form:
\begin{align*}
L\phi(x)= & \int_{\R}\left(\phi(x+h)-\phi(x)\right)px\frac{\left[x(4-x)-1/4\right]_+}{0.001}\left[\left(1-\frac{1}{4(4-x)(4-x-h)}\right)\frac{h}{x+h}\right]_{+}k(x,h)dh\end{align*}
and the Canonical Equation becomes:
\begin{align*}
\frac{dx}{dt}= & \int_{\R }\frac{px}{0.001}\left[x(4-x)-\frac{1}{4}\right]_+\left[\frac{1}{x}-\frac{1}{4x(4-x)^2}\right]_+ h^2k(x,h)dh.
\end{align*}
A mutant $y$ can invade the monomorphic resident population of trait $x$ at equilibrium if
\begin{align*}
   \frac{y(4-y)-\frac{1}{4}-(4-y)\frac{x(4-x)-1/4}{4-x}}{y(4-y)}>0  \Leftrightarrow & (y-x)(4-y)(4-x)-\frac{1}{4}(y-x)>0\\
 \Leftrightarrow & y\in \left]\min(x,f_2(x)),\max(x,f_2(x))\right[
\end{align*}where $f_2(x)=4-(1/4)/(4-x)$. The same conclusion as in Section \ref{sectionex1invasibility} holds. The trait $x^* =7/2$ is an ESS to which the TSS and the Canonical Equation converge.\\

\noindent Let us comment on these results. First, we point out that the ESS is larger in absence of senescence. In this case, the size $\widehat{M}_x$ (\ref{equilibresanssenescence}) of the population at equilibrium for a given trait $x$ is larger since the birth rate does not decrease with age. The stronger competition then favors large sizes in the trade-off between growth and reproduction. Another reason for this difference is that senescence more or less reduces the reproduction period to the beginning of life. Without senescence, the individual has more flexibility in its reproduction strategy. It may choose a size that ensures him a longer life and that allows him to give birth at more spaced intervals.\\

\noindent Let us also notice that the model with senescence is similar to a \textit{penalized} version of the logistic model of this section, with a stronger natural death rate
$$b(x)=x(4-x),\quad d(x,\langle Z,1\rangle)=\frac{5}{4}+0.001(4-x)\langle Z,1\rangle,$$
in the sense that these models lead to the same equilibrium sizes $\widehat{M}_x$ (\ref{ex1widehatm}) and ESS $x^*$ (\ref{xstar}).

\subsection{Age-logistic population interacting through the Kisdi interaction kernel}\label{sectionagelogistic}

\noindent We replace the logistic death rate by an age-logistic death rate where individuals interact through Kisdi's interaction kernel (\ref{kisdi}). In this example, computation becomes rapidly intricated. \\

\noindent The constant $R_0$ is the same as in (\ref{ex1condviabilite}). Equation (\ref{solstationnaire}) defining $\widehat{m}(x,a)$ becomes here
\begin{align}
\frac{\partial \widehat{m}}{\partial a}(x,a)=\left(\frac{1}{4}+ \frac{C\nu a}{1+\nu}\widehat{M}_x\right)\widehat{m}(x,a).\label{ex1trstationnaire1}
\end{align}As we have seen before, this type of equation is called age-logistic since the death rate in monomorphic populations is proportional to the size of the population. Plugging the solutions of (\ref{ex1trstationnaire1}) into the boundary equation of (\ref{boundarycondition}) gives for a non trivial equilibrium:
\begin{align}
1=  \int_0^{+\infty}x(4-x)e^{-\frac{5}{4}a-\frac{1}{2}a^2 \frac{C\nu \widehat{M}_x}{1+\nu}}da
=    x(4-x)e^{\frac{25(1+\nu)}{32C\nu \widehat{M}_x}}\sqrt{\frac{2\pi(1+\nu)}{C\nu \widehat{M}_x}}\left(1-\Phi\left(\frac{5}{4}\sqrt{\frac{1+\nu}{C\nu\widehat{M}_x}}\right)\right)\label{ex1trpart1}
\end{align}where $\Phi$ is the distribution function of the standard Gaussian law. Since the integral term defines a continuous and strictly decreasing function of $\widehat{M}_x$, there exists for the traits satisfying (\ref{ex1condviabilite}) a unique solution $\widehat{M}_x$ to (\ref{ex1trpart1}). There is however no explicit expression, and we use numerical computation to obtain the approximation presented in Figure \ref{figeqcanonique2}.

\begin{figure}[!ht]
\begin{center}\hspace{0cm}
\begin{minipage}[b]{.33\textwidth}\centering
\includegraphics[width=0.8\textwidth,height=0.20\textheight,
angle=0,trim=1cm 1cm 1cm 1cm]{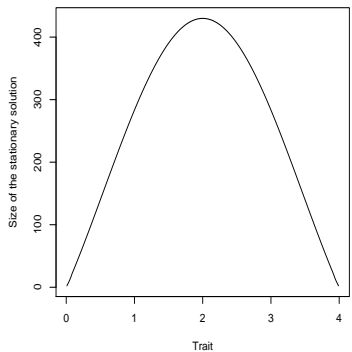}
\end{minipage}
\caption{\textit{A numerical resolution of Equation (\ref{ex1trpart1}), giving for each trait $x$ an approximation of the size $\widehat{M}_x$.}}\label{figeqcanonique2}
\end{center}
\end{figure}

\noindent Once $\widehat{M}_x$ has been computed, it is possible to obtain an approximation of the unique solution of (\ref{ex1trstationnaire1}), (\ref{ex1trpart1}) given as a function of $\widehat{M}_x$ by:
\begin{equation}
\widehat{m}(x,a)=\frac{\widehat{M}_x\sqrt{C\nu\widehat{M}_x}}{e^{\frac{1+\nu}{32 C\nu \widehat{M}_x}}\sqrt{2\pi(1+\nu)}\left(1-\Phi\left(\frac{1}{4}\sqrt{\frac{1+\nu}{C\nu \widehat{M}_x}}\right)\right)}\exp\left(-\frac{a}{4}-\frac{C\nu \widehat{M}_x}{2(1+\nu)}a^2\right).
\end{equation}
\noindent To simulate approximations of the Age-structured TSS-jump process, we have to compute the fitness $z_0(y,x)$ of a mutant $y$ in the monomorphic resident population of trait $x$. This can be obtained by solving numerically an equation in which the term $\widehat{M}_x$ is replaced by its numerical approximation $\widetilde{M}_x$:
\begin{equation}
1=y(4-y)\int_0^{+\infty}\exp\left(-\frac{5}{4}a+(z-1)y(4-y)(1-e^{-a})-\frac{U(y,x)\widetilde{M}_x}{2}a^2\right)da.
\end{equation}

\begin{figure}[!ht]
\begin{center}\begin{tabular}[!ht]{cc}
(a) & (b) \\
\begin{minipage}[b]{.36\textwidth}\centering
\vspace{0.5cm}
\includegraphics[width=0.85\textwidth,height=0.22\textheight,
angle=0,trim=1.1cm 0cm 0cm 1.1cm]{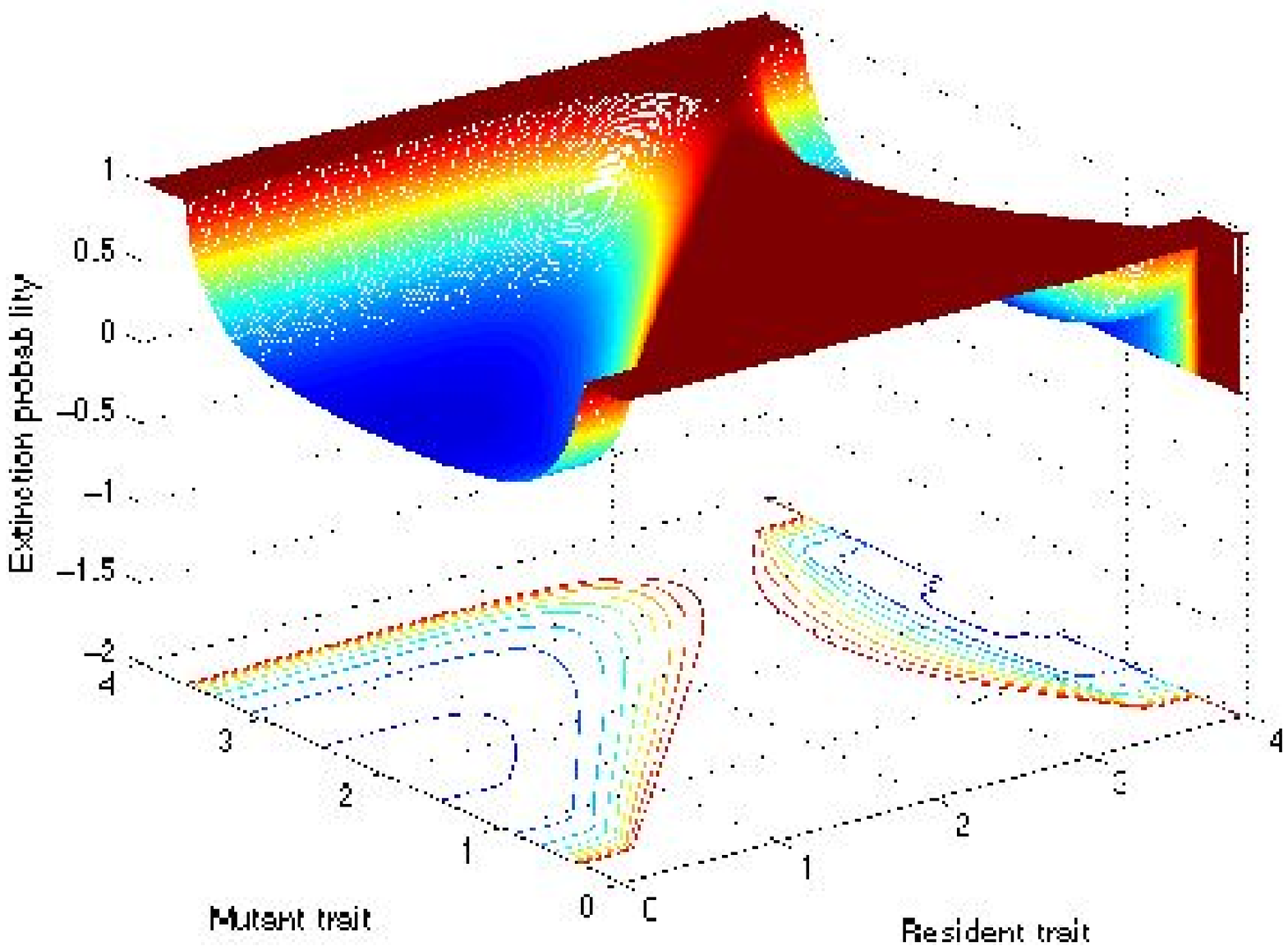}
\end{minipage} &
\hspace{-0.3cm}\begin{minipage}[b]{.40\textwidth}\centering
\includegraphics[width=0.73\textwidth,height=0.22\textheight,
angle=0,trim=1cm 1cm 1cm 1cm]{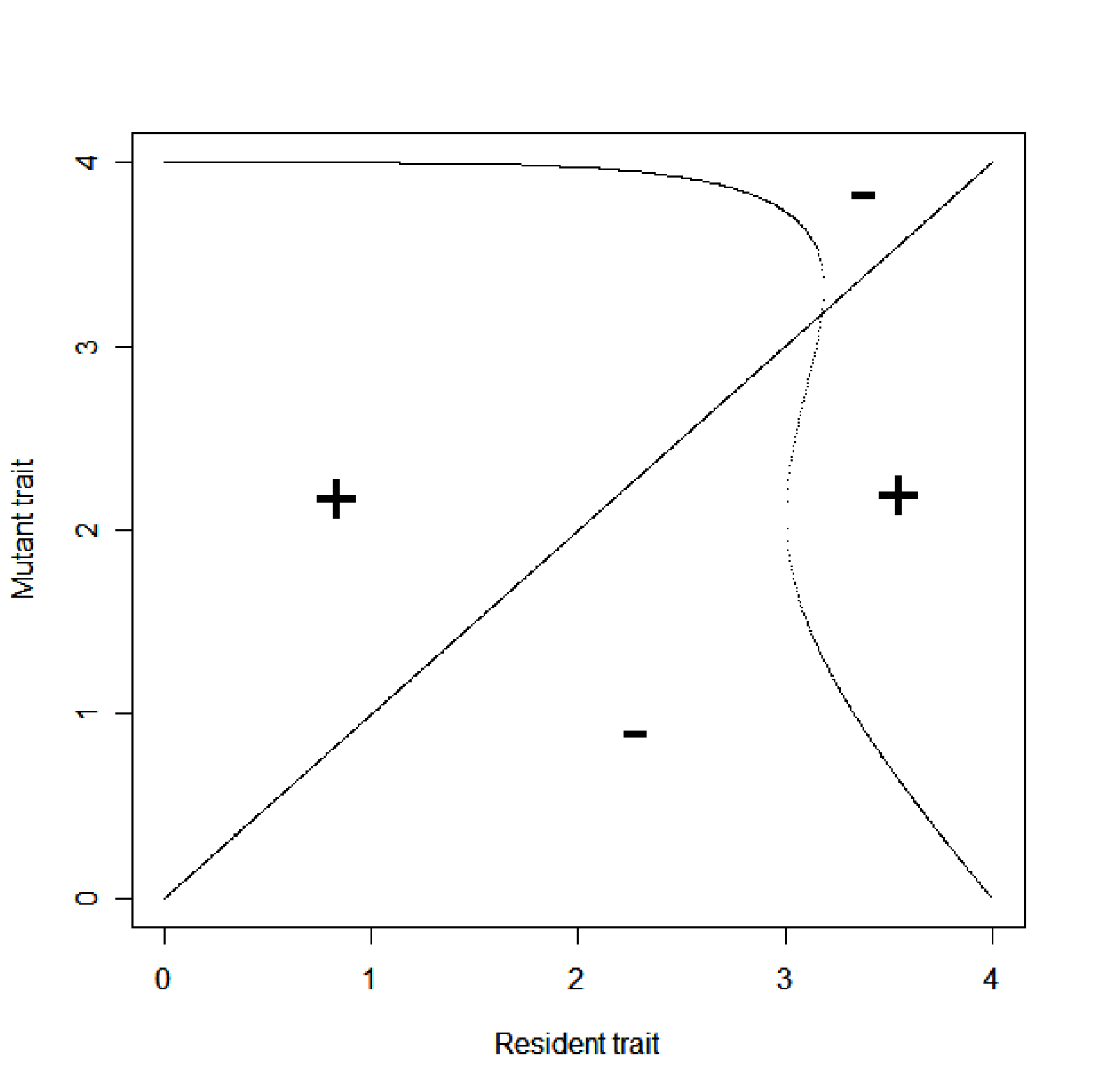}
\end{minipage}
\\
(c) & (d) \\
\begin{minipage}[b]{.36\textwidth}\centering
\includegraphics[width=0.85\textwidth,height=0.22\textheight,
angle=0,trim=1.1cm 0cm 0cm 1.1cm]{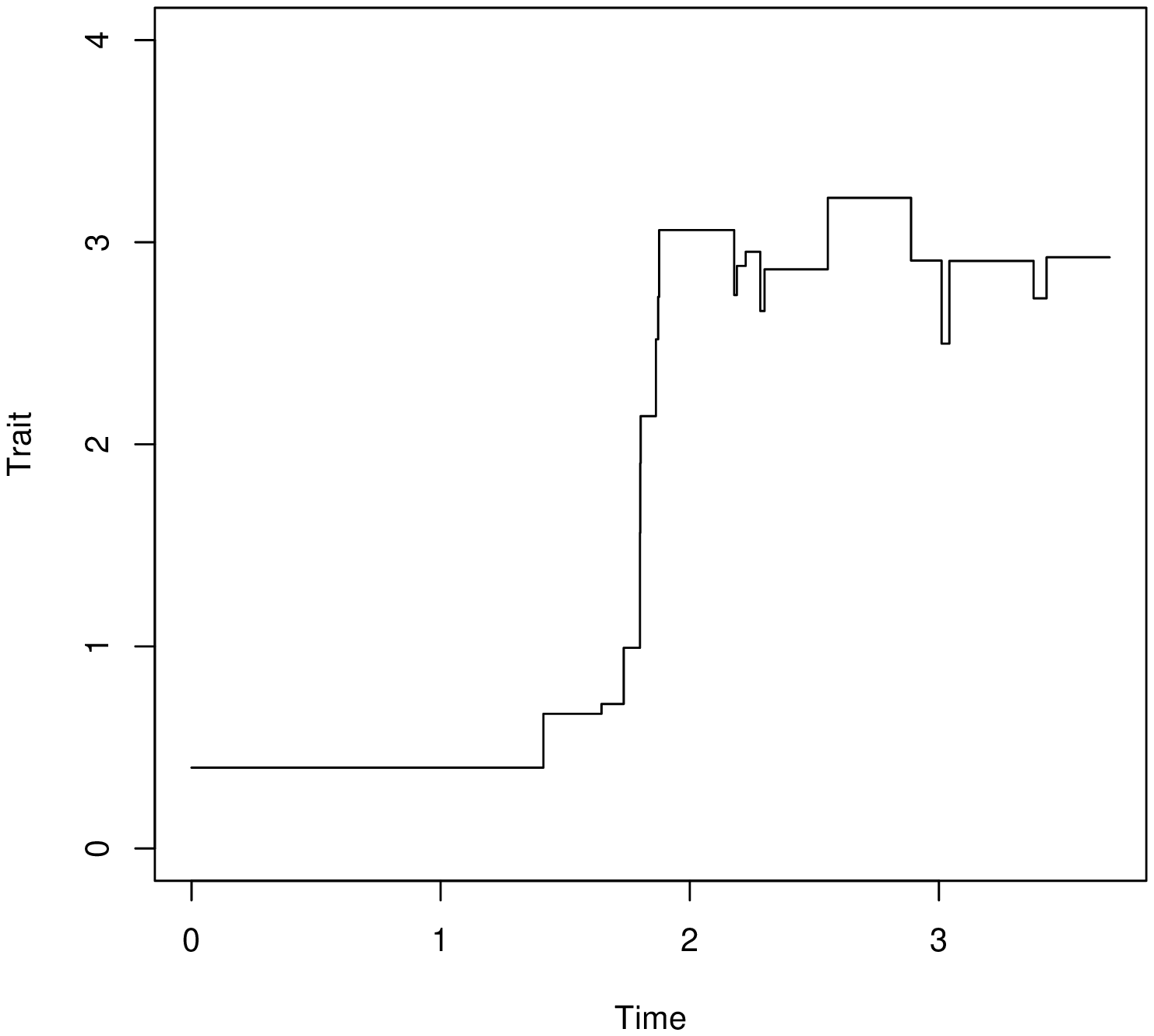}
\end{minipage} &
\begin{minipage}[b]{.36\textwidth}\centering
\includegraphics[width=0.85\textwidth,height=0.22\textheight,
angle=0,trim=1.1cm 0cm 0cm 1.1cm]{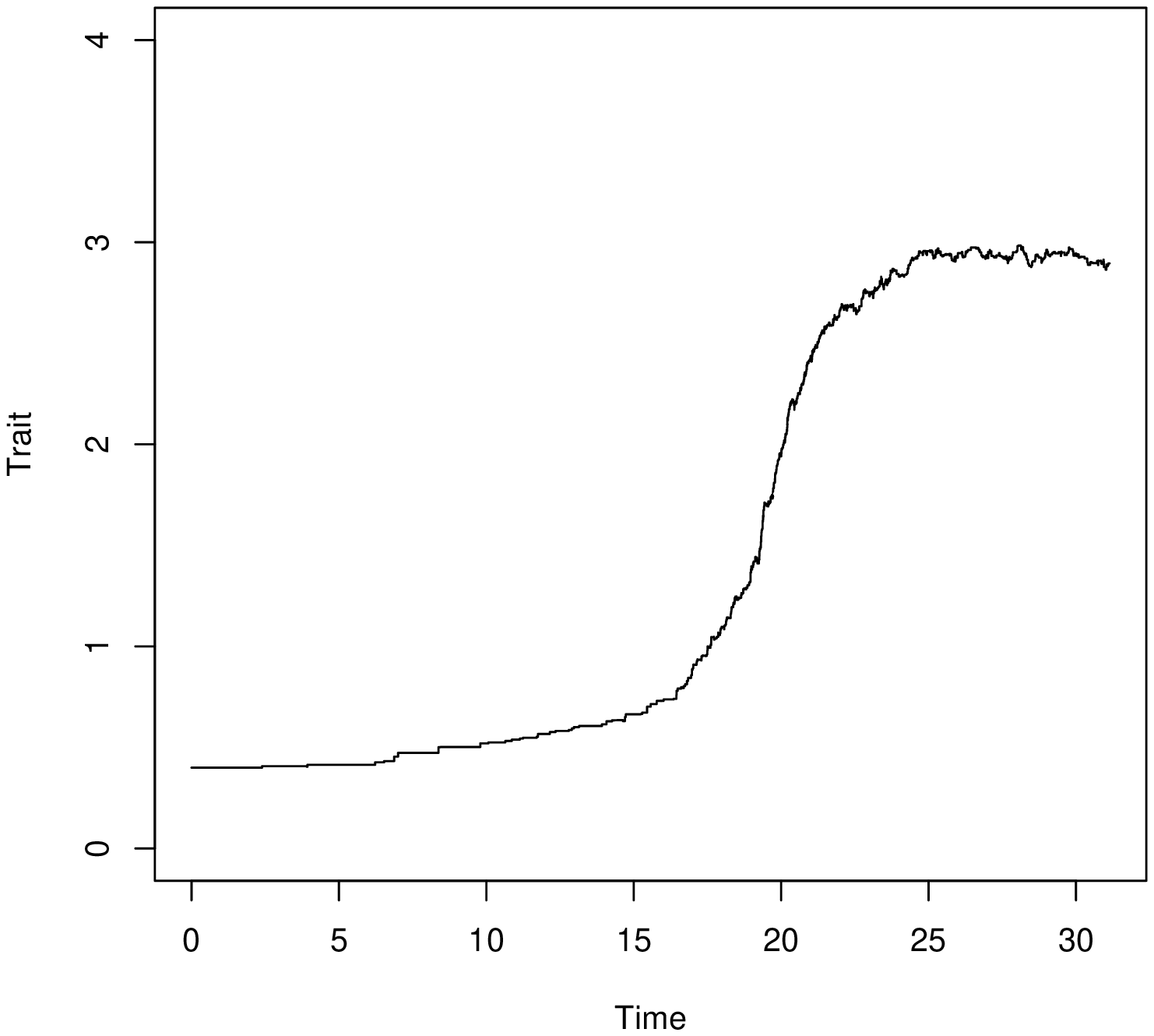}
\end{minipage}
\end{tabular}
\end{center}
\vspace{-0.5cm}\caption{\textit{(a): Probability of extinction as a function of the resident and mutant traits. (b): Associated PIP. The domain of invasibility is delimited by the first bissector and by a curve that is not the graph of a function any more. (c): Simulations of an Age-structured TSS path. (d): Approximation of the Age-structured Canonical Equation obtained in the limit of the TSS when mutations are small.}}\label{figex1tr}
\end{figure}

\noindent Conversely to the example corresponding to the rates (\ref{tauxnaissanceexemple}), (\ref{tauxmortexemple}), the TSS process does not seem to converge in long time here. The twisted curve in the PIP (Figure \ref{figex1tr}) shows indeed that we are in a case of \textit{mutual invasibility} (see \cite{diekmann, metzgeritzmeszenajacobsheerwaarden, geritzmetzkisdimeszena}).

\section{Example 2}\label{sectionexemple2}

In this last example, we investigate a model where the death rate has the separable multiplicative form commented in Section \ref{sectiondescription}\begin{equation}d(x,a,Z)=\int_{[0,4]\times \R_+}a(1+e^{-\alpha})U(x,y)Z(dy,d\alpha),\label{tauxmortexemple2}\end{equation}
$U$ being the Kisdi kernel defined in (\ref{kisdi}). The birth rate is here $b(x)=x(4-x)$. Here, it will be possible to carry an investigation of the branching phenomenon that is observed.

\subsection{Monomorphic equilibrium in large population}

In this example, the partial differential equation defined in
Proposition \ref{propconvergence1chap4} is given by
\begin{align}
  & \frac{\partial m}{\partial t}(x,a,t)+\frac{\partial m}{\partial a}(x,a,t)= -  a m(x,a,t)\int_0^{+\infty}(1+e^{-\alpha})U(x,x)m(x,\alpha,t)d\alpha,\label{eq1ex2}\\
   & m(x,0,t)=\int_0^{+\infty}x(4-x)m(x,a,t)da.\label{eq2ex2}
\end{align}

\begin{prop}Equations (\ref{eq1ex2})-(\ref{eq2ex2}) admit a unique nontrivial
stationary solution:
\begin{equation}
\widehat{m}(x,a)=  \frac{\pi x^3(4-x)^{3}(1+\nu)}{2C\nu
 \left[1+2e^{\frac{1}{\pi x^2(4-x)^2}}\left(1-\Phi\left(\frac{2}{\pi x^2(4-x)^2}\right)
 \right)\right]}
\exp\left(-\frac{\pi x^2(4-x)^2 a^2}{4}  \right),\label{solutionstationnaireex2}
\end{equation}where $\Phi$ is the distribution function of the standard gaussian law.
\end{prop}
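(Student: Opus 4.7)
The plan is to exploit the fact that the stationary version of (\ref{eq1ex2}) reduces to an ODE in $a$ with $x$ as a parameter and the nonlocal piece absorbed into a single scalar $K(x)$, so any nontrivial stationary solution must have a Gaussian age profile. The two unknowns $\widehat m(x,0)$ and $K(x)$ are then pinned down, respectively, by the renewal boundary condition (\ref{eq2ex2}) and by a self-consistency requirement on $K(x)$.

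First I would introduce the age-independent quantity
\[
K(x):=U(x,x)\int_0^{+\infty}(1+e^{-\alpha})\widehat m(x,\alpha)\,d\alpha,
\]
so that the stationary version of (\ref{eq1ex2}) reads $\partial_a \widehat m(x,a)=-a\,K(x)\widehat m(x,a)$, with solutions
\[
\widehat m(x,a)=\widehat m(x,0)\exp\bigl(-K(x)a^2/2\bigr).
\]
Inserting this profile into (\ref{eq2ex2}) and using $\int_0^{+\infty}e^{-Ka^2/2}da=\sqrt{\pi/(2K)}$ yields, for $\widehat m(x,0)\neq 0$,
\[
1=x(4-x)\sqrt{\pi/(2K(x))},\qquad\text{hence}\qquad K(x)=\frac{\pi x^2(4-x)^2}{2},
\]
which is precisely the coefficient in the Gaussian exponent of (\ref{solutionstationnaireex2}).

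Next I would close the loop by reinserting the Gaussian profile into the defining formula of $K(x)$, using $U(x,x)=C\nu/(1+\nu)$ from (\ref{kisdi}). This gives
\[
K(x)=\frac{C\nu}{1+\nu}\,\widehat m(x,0)\,(I_1+I_2),\quad I_1=\int_0^{+\infty}e^{-K(x)\alpha^2/2}d\alpha,\quad I_2=\int_0^{+\infty}e^{-\alpha-K(x)\alpha^2/2}d\alpha.
\]
The first integral satisfies $I_1=1/(x(4-x))$ by the previous step. For $I_2$, I would complete the square $\alpha+K\alpha^2/2=(K/2)(\alpha+1/K)^2-1/(2K)$ and change variables $v=(\alpha+1/K)\sqrt{K}$ to turn $I_2$ into a Gaussian tail integral, obtaining
\[
I_2=\sqrt{\frac{2\pi}{K(x)}}\,e^{1/(2K(x))}\bigl(1-\Phi(1/\sqrt{K(x)})\bigr).
\]
Solving the resulting linear equation for $\widehat m(x,0)$ and substituting $K(x)=\pi x^2(4-x)^2/2$ then produces the prefactor displayed in (\ref{solutionstationnaireex2}).

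Since both $K(x)$ and $\widehat m(x,0)$ are forced by these two compatibility conditions, the nontrivial stationary solution is unique; conversely each step above is reversible, so the explicit formula does solve (\ref{eq1ex2})--(\ref{eq2ex2}). The only genuinely non-routine computation is the evaluation of $I_2$, and the main hurdle will be careful bookkeeping of the constants inside the Gaussian tail, in particular inside the argument of $\Phi$.
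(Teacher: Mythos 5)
Your proof is correct and follows essentially the same route as the paper: introduce the age-independent scalar $K(x)$ (the paper's $\widehat{E}(x)$), deduce the Gaussian age profile, fix $K(x)=\pi x^2(4-x)^2/2$ from the renewal boundary condition, and recover $\widehat m(x,0)$ from the self-consistency equation via the Gaussian tail integral. Note only that your (correct) evaluation of $I_2$ produces $\Phi\left(1/\sqrt{K(x)}\right)$, whereas the paper's displayed formula contains $\Phi\left(2/(\pi x^2(4-x)^2)\right)=\Phi\left(1/K(x)\right)$, which appears to be a slip in the paper's bookkeeping rather than a gap in your argument.
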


\begin{proof}(\ref{formesolutionsstationnaires}) becomes here
\begin{align}
\widehat{m}(x,a)=  \widehat{m}(x,0)e^{-\int_0^a \alpha
d\alpha \widehat{E}(x)}=\widehat{m}(x,0)e^{{-a^2\over
2} \widehat{E}(x)},\quad
\widehat{E}(x)=
\int_0^{+\infty}(1+e^{-\alpha})U(x,x)\widehat{m}(x,\alpha)d\alpha.
\label{defechapeau}
\end{align}Plugging (\ref{defechapeau}) in (\ref{eq2ex2}), we obtain
\begin{align}
1= \int_0^{+\infty}x(4-x)e^{-\frac{\widehat{E}(x)
a^2}{2}}da\Leftrightarrow\quad 1=\frac{x(4-x)}
{2}\sqrt{\frac{2\pi}{\widehat{E}(x)}}\quad \Leftrightarrow \quad
\widehat{E}(x)= \frac{\pi x^2(4-x)^2}{2}.\label{echapeau}
\end{align}Finally, from (\ref{defechapeau})
\begin{align}
\widehat{m}(x,0)= & \frac{\widehat{E}(x)}{\int_0^{+\infty}(1+e^{-\alpha})U(x,x)e^{-\frac{\widehat{E}(x)
 \alpha^2}{2}}d\alpha}
=    \frac{\widehat{E}(x)^{3/2}(1+\nu)}{C\nu \sqrt{2\pi}
\left[\frac{1}{2}+e^{\frac{1}
{2\widehat{E}(x)}}\left(1-\Phi\left(\frac{1}{\widehat{E}(x)}\right)\right)\right]}.
\label{mchapeau0}
\end{align}We deduce the announced result from (\ref{defechapeau}),
(\ref{echapeau}) and (\ref{mchapeau0}).
\end{proof}

\noindent In Appendix \ref{annexestabiliteex2}, we show that the nontrivial equilibrium (\ref{solutionstationnaireex2}) is asymptotically stable.

\subsection{Invasibility}

\begin{figure}[!ht]
\begin{center}\begin{tabular}[!ht]{cc}
(a) & (b) \\
\begin{minipage}[b]{.33\textwidth}\centering
\vspace{0.5cm}
\includegraphics[width=0.8\textwidth,height=0.20\textheight,angle=0,trim=1cm 1cm 1cm 1cm]
{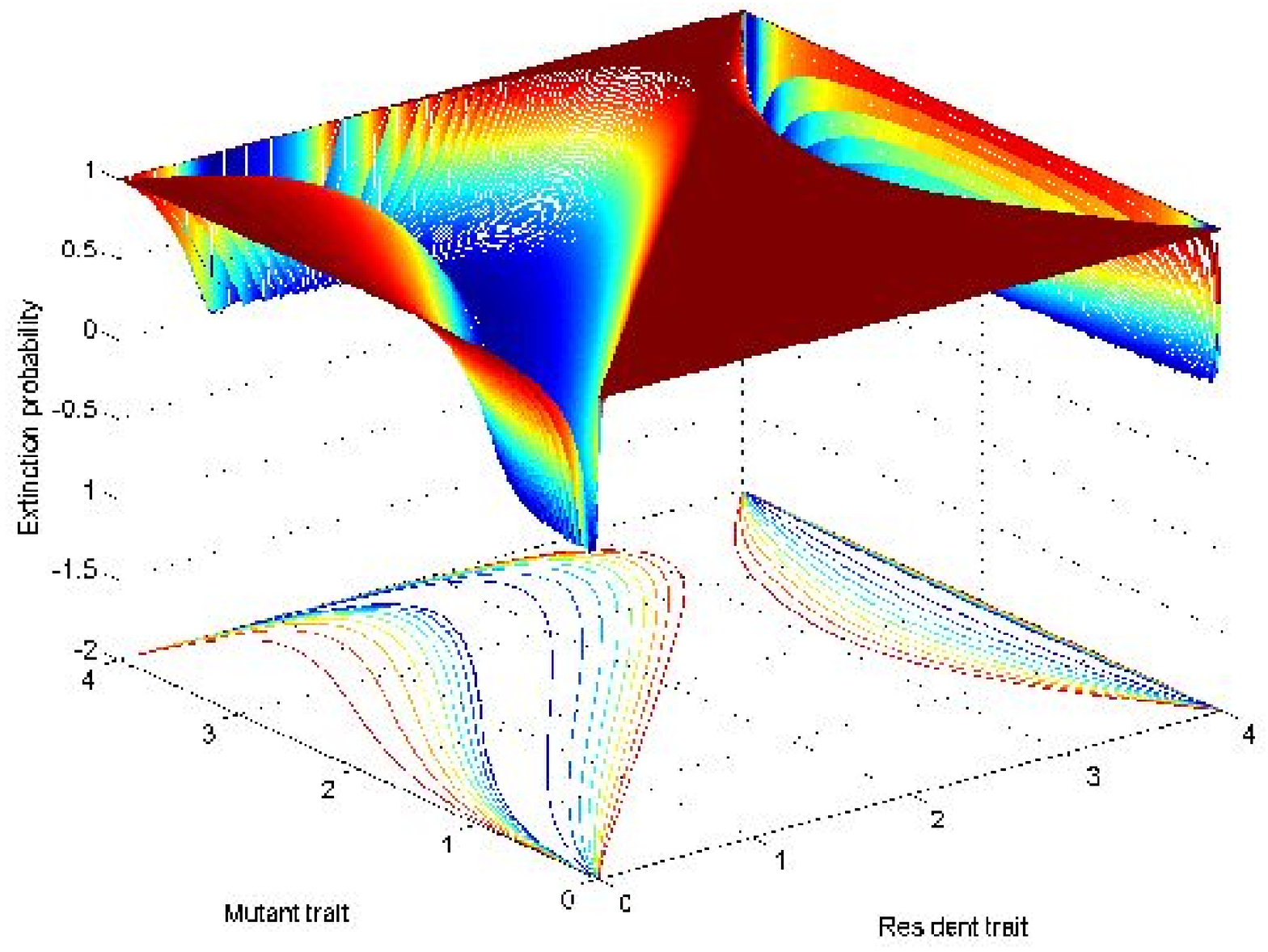}
\end{minipage} \vspace{0.5cm}&
\hspace{-0.3cm}\begin{minipage}[b]{.40\textwidth}\centering
\vspace{0.3cm}\includegraphics[width=0.73\textwidth,height=0.22\textheight,
angle=0,trim=1cm 0cm 1cm 1cm]{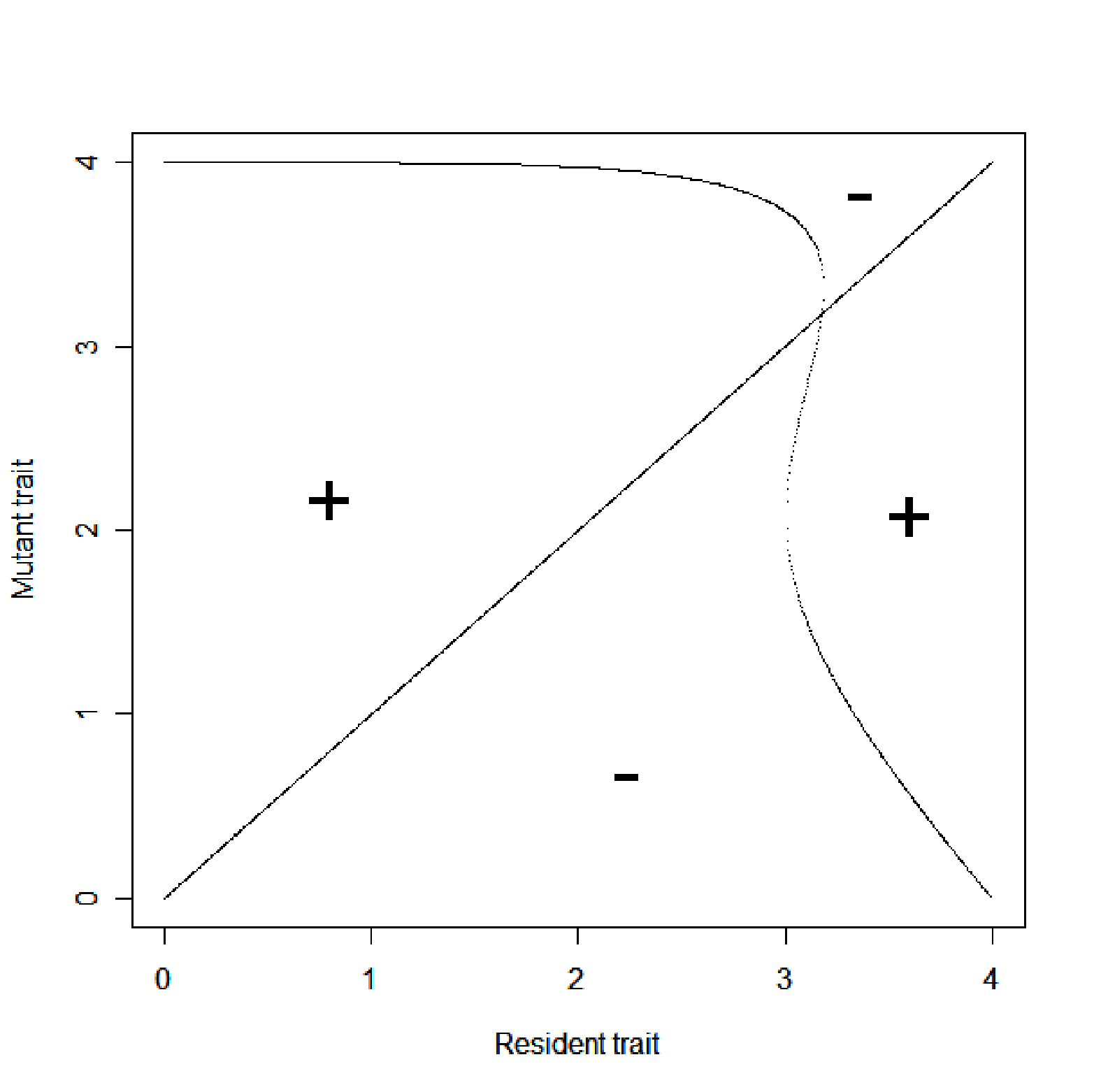}
\end{minipage}\\
(c) & (d) \\
\begin{minipage}[b]{.33\textwidth}\centering
\includegraphics[width=0.8\textwidth,height=0.20\textheight,angle=0,trim=1cm 1cm 1cm 0cm]
{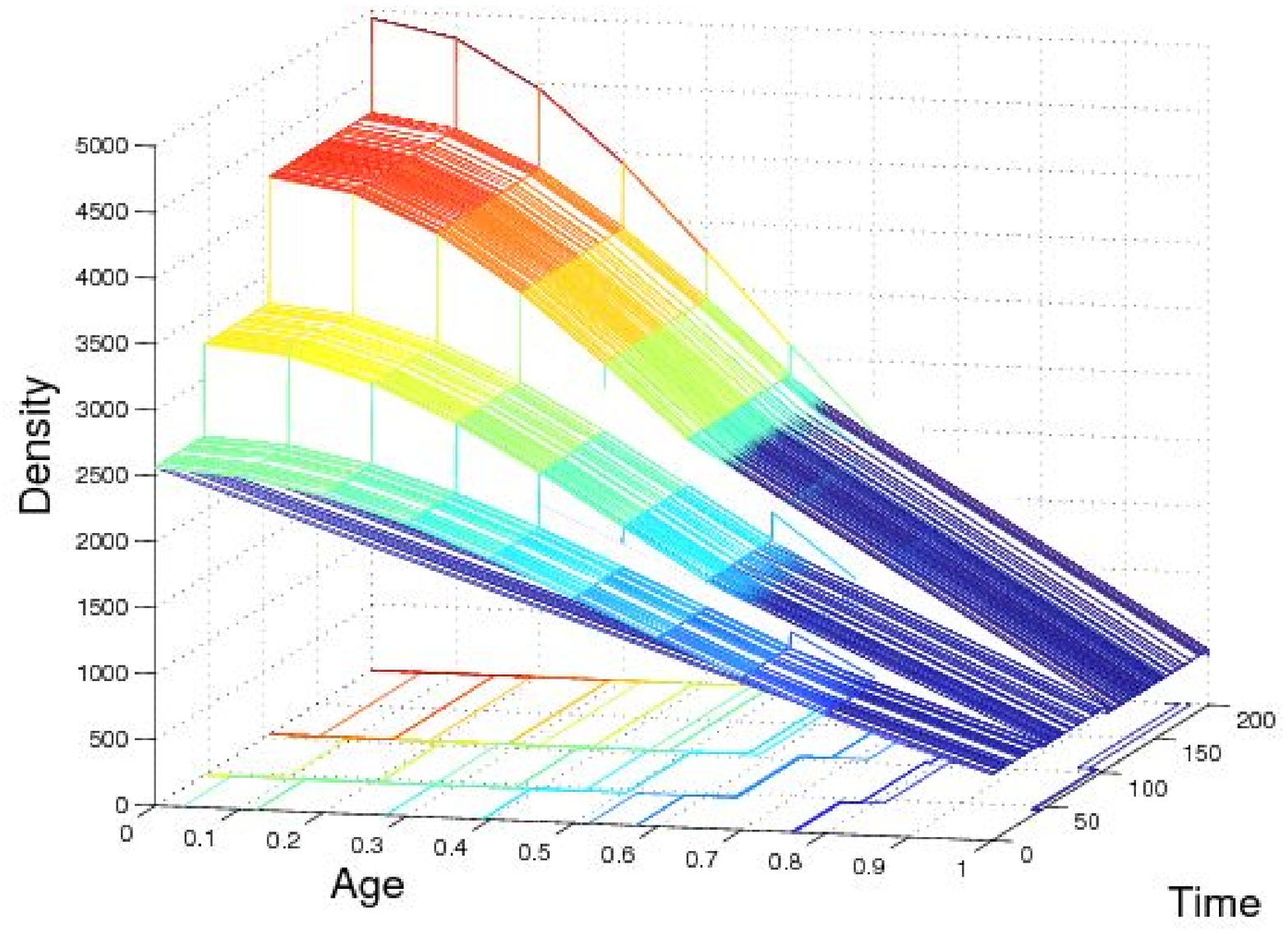}
\end{minipage}
 &
\begin{minipage}[b]{.33\textwidth}\centering
\includegraphics[width=0.8\textwidth,height=0.20\textheight,angle=0,trim=1cm 1cm 1cm 1cm]
{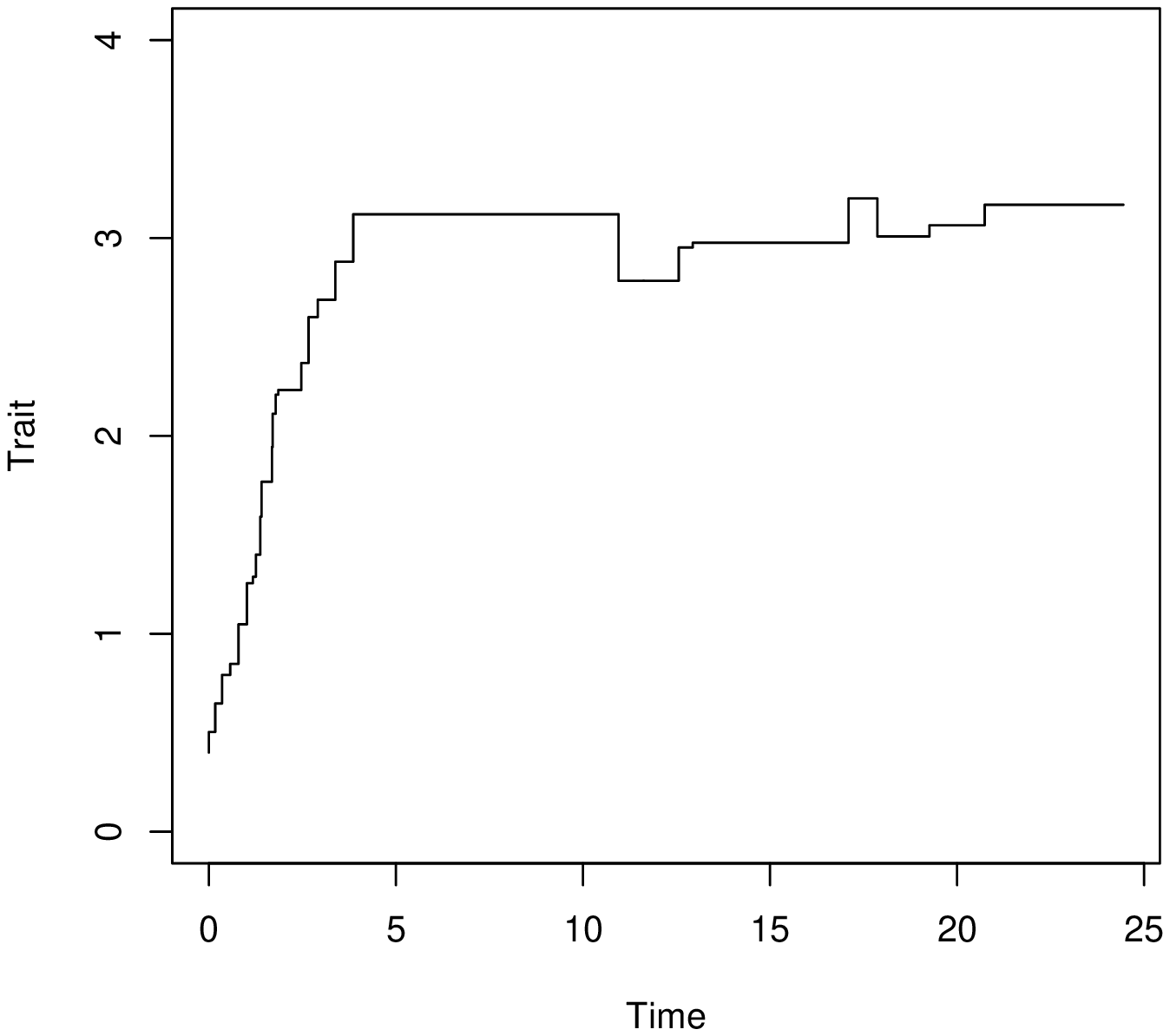}
\end{minipage}
\end{tabular}\end{center}
\caption{\textit{(a): Extinction probability as  function of the resident
and mutant traits. (b): PIP showing the regions of possible invasion, obtained by solving
(\ref{critereinvasionexemple2}). (c): Simulation of the Age-structured
TSS process. (d): we show the successive equilibrium age distributions, and right,
 the sequence of corresponding traits.}}\label{figtracefex2}
\end{figure}

Invasion of the resident population of trait $x$ at equilibrium by a mutant of trait
$y$ is possible if and only if\begin{equation}
\int_0^{+\infty}y(4-y)e^{-\frac{a^2
\widehat{E}(y,x)}{2}}da>1,\quad \mbox{ where }\widehat{E}(y,x)=
  \int_0^{+\infty}(1+e^{-\alpha})U(y,x)\widehat{m}(x,\alpha)d\alpha.
\end{equation}The balance equation \eqref{eqbalance} is
\begin{equation}
1= \int_0^{+\infty}y(4-y)e^{-\frac{\widehat{E}(y)a^2}{2}}da,
\label{balanceconditionex2}
\end{equation} and hence, the probability of extinction $z_0(y,x)$ is strictly less than 1
 if and only if:
\begin{align}
\widehat{E}(y,x)<\widehat{E}(y)\label{conditionexemple2}
\end{align}
Since $
\widehat{E}(y,x)=\widehat{E}(x)\frac{(1+\nu)e^{-k(y-x)}}{1+\nu
e^{-k(y-x)}}$, (\ref{conditionexemple2})
becomes:
\begin{align}
x^2(4-x)^2\frac{(1+\nu)e^{-k(y-x)}}{1+\nu e^{-k(y-x)}}<y^2(4-y)^2.\label{critereinvasionexemple2}
\end{align}
This inequality can be solved numerically and the PIP is given in
Figure \ref{figtracefex2} (b).\\

\noindent In the case where invasion is possible, the fitness of the mutant
of trait $y$ in the resident population of trait $x$ is the
smallest solution in $[0,1]$ of
(\ref{equationdeterminationsurvie3}) which in this example becomes
\begin{align}
1= & \int_0^{+\infty}y(4-y)e^{(z-1)y(4-y)a-\widehat{E}(y,x)a^2/2}da\nonumber\\
= & \frac{\sqrt{2\pi}y(4-y)}{\sqrt{\widehat{E}(y,x)}}
\exp\left(\frac{(z-1)^2y^2(4-y)^2}{2\widehat{E}(y,x)}\right)
\left(1-\Phi\left(-\frac{(z-1)y(4-y)}{\sqrt{\widehat{E}(y,x)}}\right)\right).
\label{equationzexemple2}
\end{align}

\noindent The solution of (\ref{equationzexemple2}) cannot be obtained explicitly, but
can be computed numerically for every $x$ and $y$ in $[0,4]$  as obtained in Figure \ref{figtracefex2}.\\

\noindent Conversely to Example 1, the TSS process does not converge to a limit, even if it
 seems to stay in the neighborhood of the evolutionary singularity $x^*\approx 3.2$.
 Moreover,  looking at the underlying microscopic process (Figure \ref{figmicrosc}),
 we can observe splitting of the population into smaller groups once the neighborhood of
 this point has been reached. To understand this fact, let us study more carefully the
 directional derivatives of the solution of (\ref{equationzexemple2}).

\begin{figure}[!ht]
\begin{center}\begin{tabular}[!ht]{cc}
(a) & (b) \\
\begin{minipage}[b]{.33\textwidth}\centering
\vspace{0.7cm}\includegraphics[width=0.7\textwidth,height=0.17\textheight,angle=0,trim=1.1cm 0cm 0cm 1.1cm]
{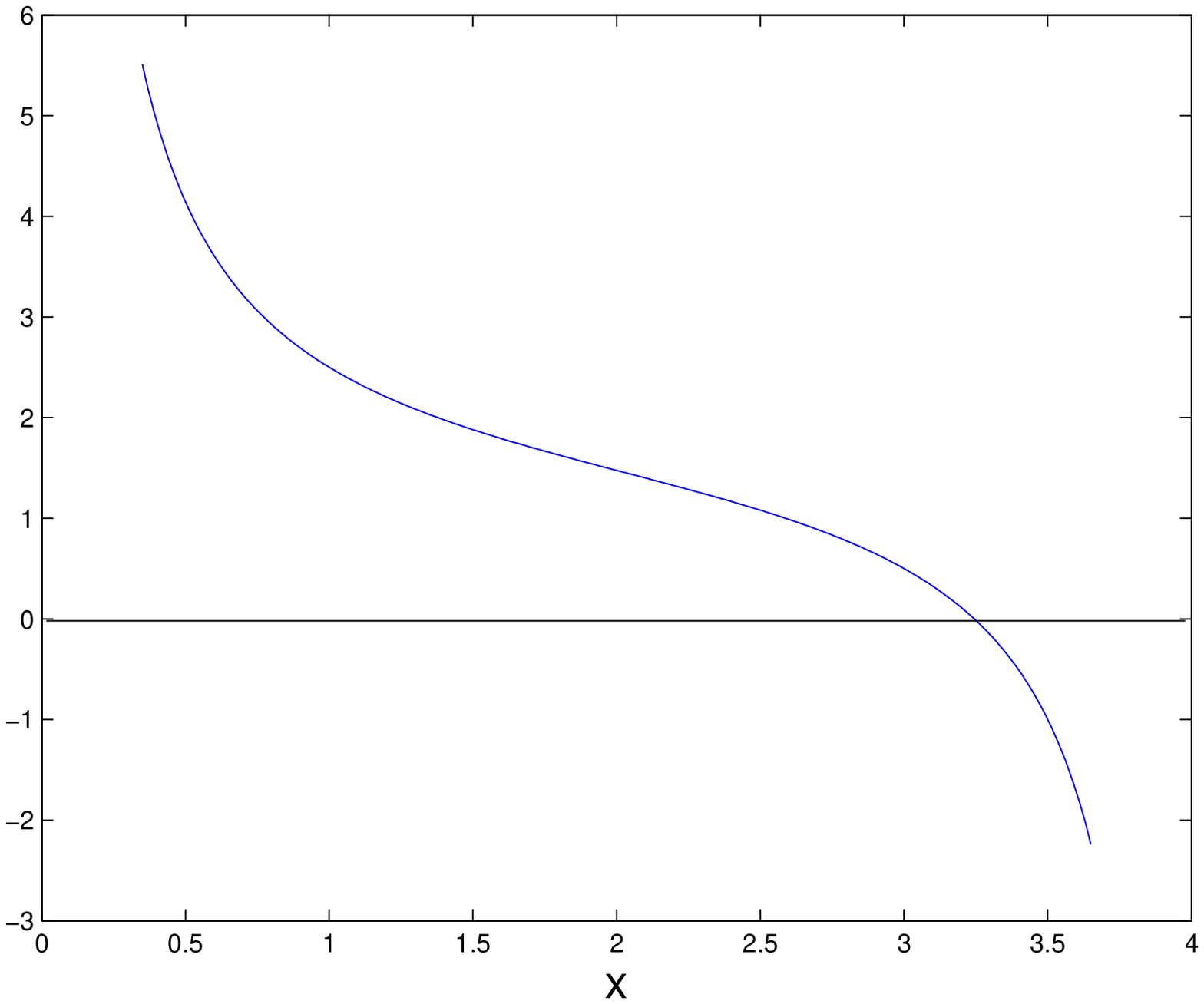}
\end{minipage} &
\begin{minipage}[b]{.33\textwidth}\centering
\vspace{-0.4cm}\includegraphics[width=0.7\textwidth,height=0.16\textheight,angle=0,trim=1cm 1cm 1cm 1cm]{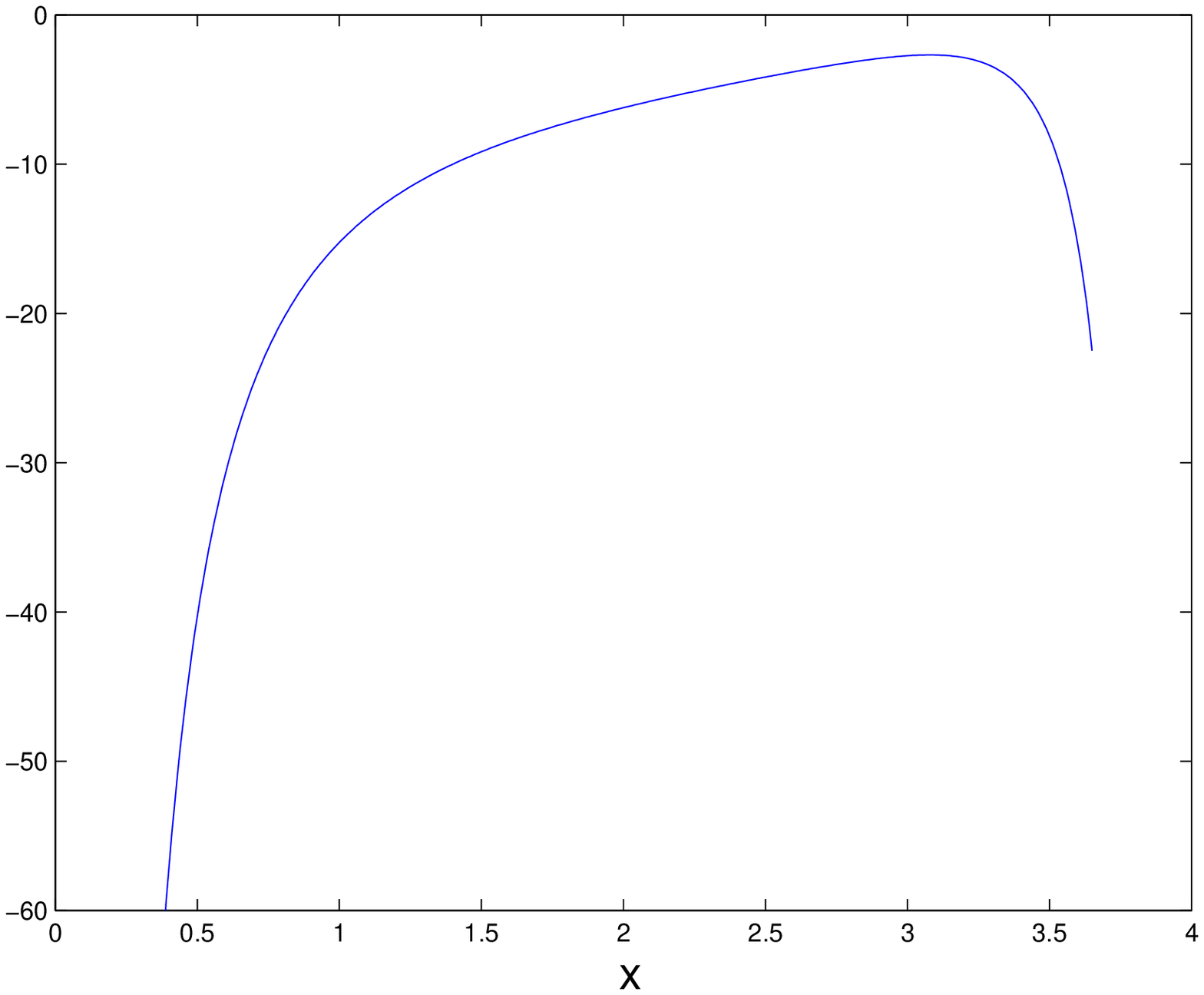}
\end{minipage}\\
(c) & (d) \\
\hspace{-0.5cm}\begin{minipage}[b]{.33\textwidth}\centering
\vspace{0.5cm}\includegraphics[width=0.78\textwidth,height=0.16\textheight,angle=0,trim=1cm 1cm 1cm 1cm]{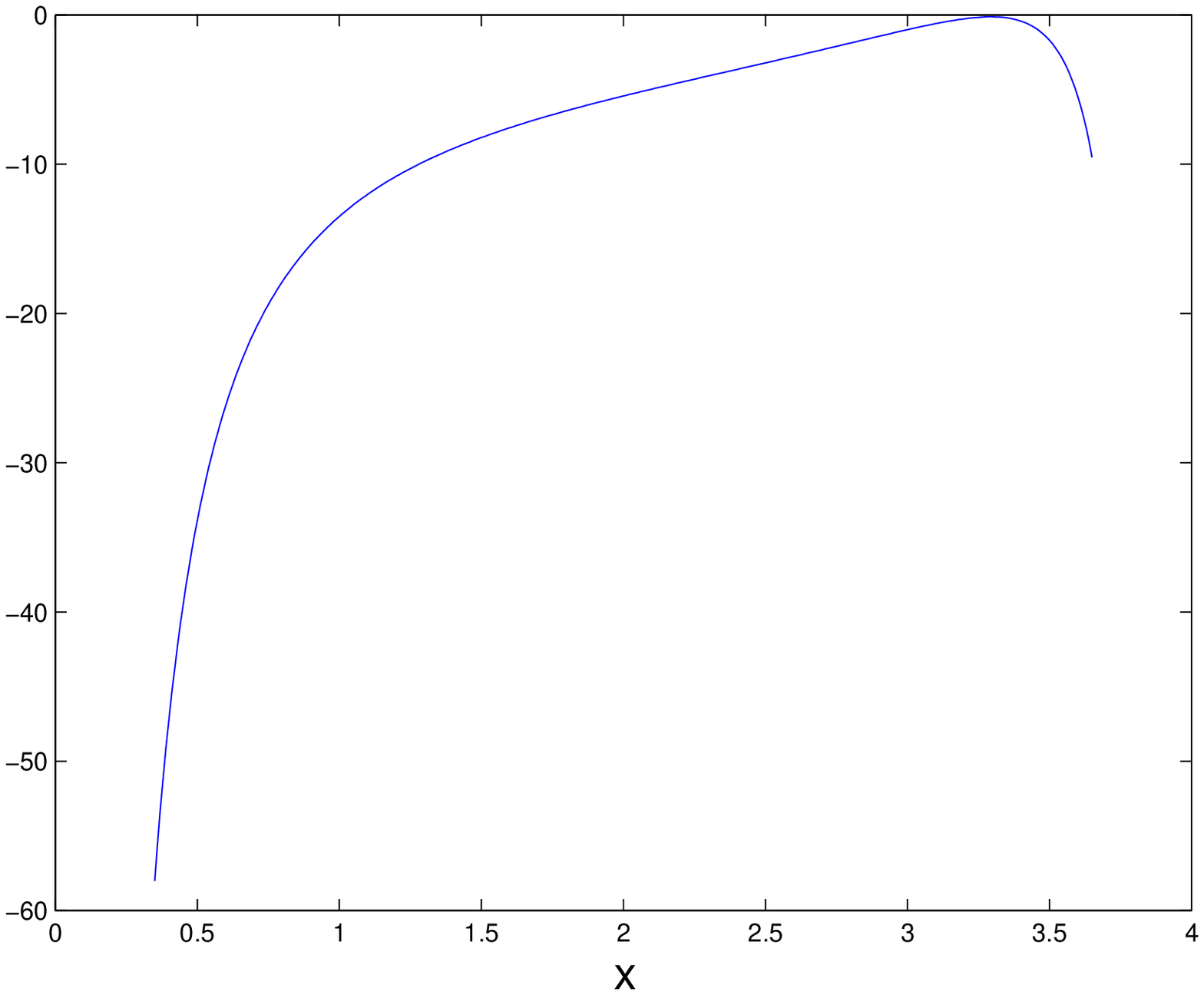}
\end{minipage}
 & \begin{minipage}[b]{.33\textwidth}\centering
\vspace{0.5cm}\includegraphics[width=0.7\textwidth,height=0.16\textheight,angle=0,trim=1cm 1cm 1cm 1cm]{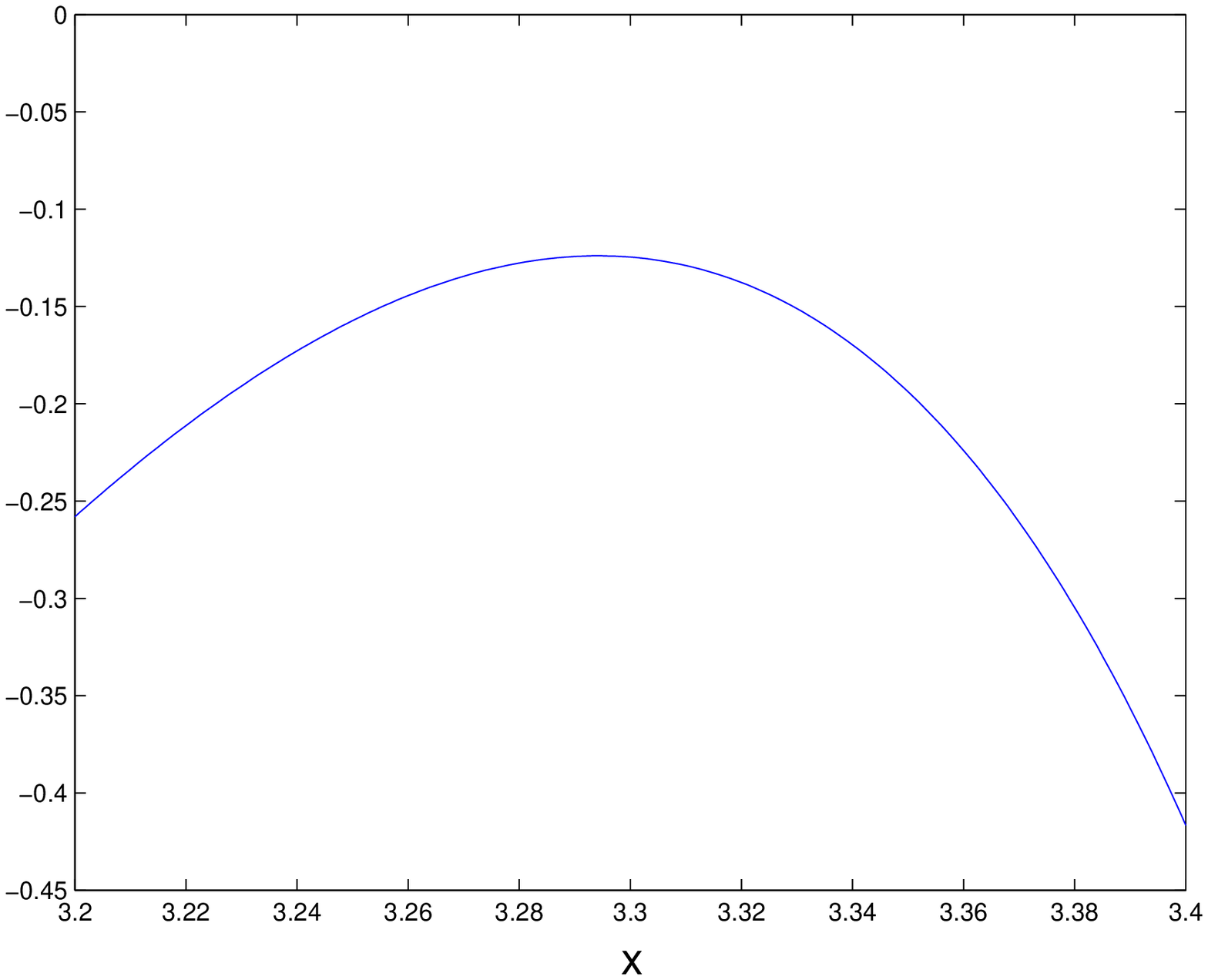}
\end{minipage}
\end{tabular}\end{center}
\caption{\textit{(a): Graph of $x\mapsto \partial_1 g(x,x)$ defined in (\ref{gradientfitnessex2}). (b): Graph of $x\to  \partial_{22}^{2}g(x,x)$. (c), (d): graphs of
$ x\to \partial_{11}^2 g(x,x)$ on $(0,4)$ and on $[3.2,3.4]$.}}\label{figderivee}
\end{figure}

\begin{prop}\label{etudefitnessex2}
Recall that $g$ has been defined in Lemma \ref{g}. Then, for all
$x\in (0,4)$,
\begin{align}
& \partial_1 g(x,x)=\frac{1.4 (x+1.4)(x-3.2)}{x(4-x)}\label{gradientfitnessex2}\\
& \partial^2_{11}g(x,x)=-\frac{5.8(x^2+ 1.8 x + 1.1)(x^2- 6.6 x + 10.9)}{x^2 (4-x)^2 }
\nonumber\\
& \partial^2_{22}g(x,x)=-\frac{5.8(x^2+1.8 x+1.5)(x^2-6.6 x+11.1)}{x^2(4-x)^2}.\nonumber
\end{align}
\end{prop}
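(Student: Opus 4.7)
The plan is to apply the implicit function theorem to $F(g(y,x),y,x)=0$ (from Lemma~\ref{g}) at the diagonal point $(1,x,x)$, where $g(x,x)=1$ by construction. In Example 2 the rate $b(y)=y(4-y)$ is age-independent and $\widehat{d}(y,a,x)=a\,U(y,x)J(x)$ with $J(x)=\int_0^{+\infty}(1+e^{-\alpha})\widehat{m}(x,\alpha)\,d\alpha$, so
\begin{equation*}
F(z,y,x)=y(4-y)\int_0^{+\infty}\exp\!\Big((z-1)a\,y(4-y)-\tfrac{a^2}{2}\widehat{E}(y,x)\Big)\,da-1,\qquad \widehat{E}(y,x)=U(y,x)J(x).
\end{equation*}
The balance equation (\ref{balanceconditionex2}) gives $\widehat{E}(x,x)=\pi x^2(4-x)^2/2$; since $U(x,x)=C\nu/(1+\nu)$ is $x$-independent, one obtains $J(x)=(1+\nu)\pi x^2(4-x)^2/(2C\nu)$ and, by differentiation of the balance identity, closed forms for $J'(x)$ and $J''(x)$. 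Every $a$-integral arising below reduces to a Gaussian moment $\int_0^{+\infty}a^n e^{-\widehat{E}(x,x)a^2/2}\,da$, which is explicit.

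For $\partial_1 g(x,x)=-\partial_y F/\partial_z F$, I compute $\partial_z F(1,x,x)=x^2(4-x)^2/\widehat{E}(x,x)=2/\pi$ and, using $\partial_y U(x,x)=-Ck\nu/(1+\nu)^2$, $\partial_y F(1,x,x)=(4-2x)/[x(4-x)]+k/[2(1+\nu)]$. Substituting the numerical parameters $C=0.002$, $\nu=1.2$, $k=4$ yields $\partial_1 g(x,x)=\pi(x^2-1.8x-4.4)/[2.2\,x(4-x)]$, which is (\ref{gradientfitnessex2}) up to the rounding $\pi/2.2\simeq 1.4$ and the rounded roots $3.18\simeq 3.2$, $-1.38\simeq -1.4$. (This agrees with specialising the general identity (\ref{resultatgradientfitness}).) For $\partial_2 g(x,x)$ one can either compute $-\partial_x F/\partial_z F$ directly, or use the diagonal identity $\partial_1 g(x,x)+\partial_2 g(x,x)=0$ coming from $g(x,x)\equiv 1$.

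For the second-order derivatives I differentiate $F(g(y,x),y,x)=0$ twice, obtaining
\begin{equation*}
\partial^2_{11}g=-\frac{\partial^2_{yy}F+2\,\partial^2_{yz}F\,\partial_1 g+\partial^2_{zz}F\,(\partial_1 g)^2}{\partial_z F},\quad
\partial^2_{22}g=-\frac{\partial^2_{xx}F+2\,\partial^2_{xz}F\,\partial_2 g+\partial^2_{zz}F\,(\partial_2 g)^2}{\partial_z F}.
\end{equation*}
The derivatives of $F$ at $(1,x,x)$ are again Gaussian moments up to order four, weighted by $b$, $b'$, $b''$ and by the first and second partials of $U$ on the diagonal (themselves elementary from the Kisdi kernel), together with $J'(x)$, $J''(x)$. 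Each of $\partial^2_{11}g$ and $\partial^2_{22}g$ is then a rational function whose denominator is $x^2(4-x)^2$ and whose numerator is a polynomial of degree four in $x$.

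The main obstacle is the algebraic bookkeeping for these second-order expressions: the numerator polynomials are too long to be simplified by hand. As announced at the start of Section~\ref{sectionexemple}, I use {\sc Maple} to carry out the symbolic manipulations after substituting $C=0.002$, $\nu=1.2$, $k=4$; the resulting degree-four numerator polynomials then factor as products of two quadratics, giving the stated forms once the coefficients are rounded to one decimal place (consistently with the already-observed coefficient $1.4\simeq\pi/2.2$).
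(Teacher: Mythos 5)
Your proposal is correct and follows essentially the same route as the paper: both perturb the fixed-point equation (\ref{equationzexemple2}) around the diagonal $(z,y,x)=(1,x,x)$, identify the coefficients order by order (your implicit-function-theorem formulas are just the organized form of that expansion), and delegate the second-order algebra to {\sc Maple} before rounding the coefficients. Your explicit hand computation of $\partial_z F(1,x,x)=2/\pi$ and $\partial_y F(1,x,x)=(4-2x)/[x(4-x)]+k/[2(1+\nu)]$, which recovers (\ref{gradientfitnessex2}) with $5\pi/11\simeq 1.4$, is a welcome verification of the first-order term that the paper leaves entirely to the symbolic computation.
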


\begin{proof}
In order
to obtain these derivatives, we replace as previously $y$ by
$y+\varepsilon_y$ and $x$ by $x+\varepsilon_x$ in
(\ref{equationzexemple2}), expand the right hand side in series with {\sc Maple}
 in $\varepsilon_x$ and $\varepsilon_y$ and identify the terms according to their order.
 \end{proof}

\noindent We remark that the function $x \to \partial_1 g(x,x)$
vanishes on the unique point $x^*=3.2$ of the interval $[0,4]$, which confirms
the simulations. $\partial_{11}^2 g(x,x)$
remains negative on $[0,4]$ and the function $y\mapsto g(y,x)-1$
is hence locally concave at the neighborhood of $(x,x)$, for each $x\in (0,4)$.
 In particular, $x^*$ corresponds to a local minimum of the fitness function.
 Moreover, we can check that $\partial^2_{22}g(x^*,x^*)\approx -2.9<
 \partial_{11}^2 g(x^*,x^*)\approx -0.3$ which entails that $x^*$ is a
 \textit{branching point} in the terminology of
  \cite{diekmann, metzgeritzmeszenajacobsheerwaarden, geritzmetzkisdimeszena}.
   These results can also be read on the PIP (Figure \ref{figtracefex2} (b))
    which shows that the population can still be invaded once it has reached the
    neighborhood of the evolutionary singularity $x^*$.

\section{Conclusion}

\noindent In this work, we have considered a population structured by traits and by a continuous
age, varying with time. The ecology is approximated by a
partial differential equation in both trait and age. Based on a measure-valued process approach and generalizing \cite{champagnat3, champagnatferrieremeleard}, we have obtained new approximations extending the
 Trait Substitution Sequence and the Canonical Equation of Adaptive Dynamics to trait and age-structured populations.
  To our knowledge, these equations have not been proposed so far in the biological literature.
  Our approach emphasizes that the object of interest here is the whole age-distribution of the monomorphic population at equilibrium, and not only the equilibrium trait.\\

\noindent Adding an age structure to trait structured population models opens the way to new problems dealing with life history feature. In the examples, we have seen that age-structure does not seem to change the qualitative behavior of the evolutionary approximation but refines the computation of the evolutionary stable state. As in the case without age-structure, the choice of the competition kernel has a strong influence on the patterns that can be observed on the microscopic simulations, and on the nature of the evolutionary singularities.

\begin{center}\textbf{Acknowledgments}\end{center}
The authors thank Nicolas Champagnat, Régis Ferrière and Pierre Collet for many
fruitful discussions.

\appendix

\section{Proof of Theorem \ref{theoremtss} and of Proposition \ref{propextinction}}\label{sectiontraitsubstitutionsequence}

\noindent The skeleton of the proof of Theorem \ref{theoremtss} is similar to the one in Champagnat \cite{champagnat3} for populations without
age-structure, but with additional difficulties. In \cite{champagnat3}, the distance between
a population and its equilibrium is obtained by comparisons of the
trait values and of the sizes. Here, age-distributions have to be taken into account. We need to compare the measure $Z^n_t$ with the stationary solution of (\ref{pde2}). Thus, we consider the space of finite measures $\mathcal{M}_F(\widetilde{\X})$ which we embed with the weak
convergence topology, induced by the Dudley metric (see Rachev
\cite{rachev} p79):
$$\forall \mu,\,\nu\in \mathcal{M}_F(\widetilde{\X}),\,D(\mu,\nu)=\sup_{{\scriptsize
 \begin{array}{c}f\mbox{ 1-Lip}\\
\|f\|_\infty\leq 1\end{array}}}\left|\langle \mu,f\rangle-\langle
\nu,f\rangle\right|.$$To prove Theorem \ref{theoremtss}, we establish that $\forall t\in \R_+$, $\forall
\Gamma\subset \X$ mesurable, $\forall \varepsilon>0$
\begin{eqnarray}
 \lim_{n\rightarrow +\infty}\mathbb{P}
 \left(Z^n_{t/nu_n}\mbox{ is monomorphic of trait } y\in \Gamma,\,
 D(Z^n_{t/nu_n},\widehat{\xi}_y)<\varepsilon\right)=\mathbb{P}_{x_0}
 \left(X_t\in \Gamma\right),\label{equationtheoremeprincipal}
\end{eqnarray}where $(X_t)_{t\in \R_+}$ has been defined in
Theorem \ref{theoremtss}. We use the
exponential deviations and estimates of times of exit of domains
established in \cite{trangdesdev} one the one hand, and
comparisons with linear age-structured birth and death processes
on the other hand. This last point is detailed through the main
steps of the proof of Proposition \ref{propextinction} in the
sequel. For a complete proof, we refer to \cite{chithese} (Chapter
6).

\noindent Let $\varepsilon>0$ and let us introduce the following stopping times:
\begin{align*}
R_\varepsilon^n  = & \inf\left\{t\geq 0\,\,\,\,|\,\,\,\,D\left( 1_{\{x_0\}}(x)Z^n_t(dx,da),\widehat{\xi}_{x_0}(dx,da)\right)\geq \varepsilon \right\}\\
S^n_\varepsilon  = & \inf\left\{t\geq 0\,\,\,\,|\,\,\,\,\int_{\widetilde{\X}}1_{\{y\}}(x)Z^n_t(dx,da)\geq \varepsilon\right\}\\
S^n_0  = & \inf\left\{t\geq
0\,\,\,\,|\,\,\,\,\int_{\widetilde{\X}}1_{\{y\}}(x)Z^n_t(dx,da)=0\right\}.
\end{align*}Recall that the time of first mutation $\tau$ and the time of return to a monomorphic state $\theta$ have been defined in Proposition \ref{propextinction}. By large deviations results, and by comparison results, we can prove that (see \cite{chithese}, Section 6.2.4)
\begin{equation}\exists \rho>0,\, \exists n_0\in \N^*,\,\forall n\geq n_0,\, \mathbb{P}^n_{x_0,q^n_0,y}\left(\frac{\rho}{nu_n}<\tau\wedge R^n_\varepsilon\right)\geq 1-2\varepsilon.\label{estimeetauetr}
\end{equation}On the period $[0,\tau\wedge R^n_\varepsilon\wedge
S^n_\varepsilon\wedge S^n_0]$, the mutant population is of mass
smaller than $\varepsilon$ and evolves in a resident population
that is closed to $\widehat{\xi}_{x_0}$. The extinction probability $z_0(y,x)$ that appears in Theorem \ref{theoremtss}
comes from comparison of the mutant population process with the
following linear birth and death age-structured (non-renormalized)
processes: for sufficiently large $n\in \N^*$ (such that $1-u_n
p>1-\varepsilon$), $\forall t\in [0,\tau\wedge
R^n_\varepsilon\wedge S^n_\varepsilon],\, \forall f\in
\mathcal{B}_b(\R_+,\R_+),\, $
$$\frac{1}{n}\langle Z^{1, \varepsilon}_{t},f\rangle \leq \int_{\widetilde{\X}}1_{\{y\}}(x)f(a)Z^n_t(dx,da)\leq
\frac{1}{n}\langle Z^{2,\varepsilon}_{t},f\rangle,$$where:
\begin{enumerate}
\item $(Z^{1,\varepsilon}_t)_{t\in \R_+}$ is the birth and death
process with  birth rate $b_1(a)= (1-\varepsilon)b(y,a)$ and death
rate
$d_1(a)=d(y,a)+\int_{\R_+}U((y,a),(x_0,\alpha))\widehat{m}(x_0,\alpha)d\alpha
+\bar{U}\varepsilon,$ \item $(Z_t^{2,\varepsilon})_{t\in \R_+}$ is
the birth and death process with
  birth rate $b_2(a)=b(y,a)$ and death rate $d_2(a)=d(y,a)+\int_{\R_+}U((y,a),(x_0,\alpha))\widehat{m}(x_0,\alpha)d\alpha
-\bar{U}\varepsilon.$
\end{enumerate}Let us define for $i\in \{1,2\}$ :
\begin{align*}
S^{i}_{\varepsilon n}  =  \inf\left\{t\geq
0\,\,\,\,|\,\,\,\,\langle Z^{i,\varepsilon}_{t},1\rangle\geq
\varepsilon n\right\},\,\,\quad  S^{i}_0  =  \inf\left\{t\geq
0\,\,\,\,|\,\,\,\,\langle
Z^{i,\varepsilon}_{t},1\rangle=0\right\}.
\end{align*}
The probabilities that these processes reach the level $\varepsilon
n$ before getting extinct are given by the following lemma, proved
at the end of the section:
\begin{lemme}\label{lemmebranchementlineaire}Let $\varepsilon>0$, and let $i\in \{1,2\}$. Let us consider the linear birth and death age-structured process $(Z^{i,\varepsilon}_t)_{t\in \R_+}$ introduced above and starting from $Z^{i,\varepsilon}_0(da)=\delta_0(da)$. Let $(t_n)_{n\in \N^*}$ be a positive real sequence such that $\lim_{n\rightarrow +\infty}t_n/\log n=+\infty$.
\begin{enumerate}
\item If $\int_0^{+\infty} b_i(a)e^{-\int_0^a
d_i(\alpha)d\alpha}da\leq 1$:
\begin{eqnarray}
\lim_{n\rightarrow+\infty}\mathbb{P}\left(S^{i}_0\leq t_n\wedge
S^{i}_{\varepsilon n}\right)=1\label{branching1}
\end{eqnarray}
\item If $\int_0^{+\infty} b_i(a)e^{-\int_0^a
d_i(\alpha)d\alpha}da> 1$:
\begin{align}
\lim_{n\rightarrow+\infty} & \mathbb{P}\left(S^i_0\leq t_n\wedge S^i_{\varepsilon n}\right)=z^i_0(y,x_0)\label{branching3}\\
\lim_{n\rightarrow+\infty} & \mathbb{P}\left(S^i_{\varepsilon
n}\leq t_n\leq S^i_0\right)=1-z^i_0(y,x_0),\label{branching4}
\end{align}where $z^i_0(y,x_0)$ is the smallest solution in $[0,1]$ of the equation analogous to (\ref{equationdeterminationsurvie}), where the birth and death rates are replaced by $b_i(a)$ and $d_i(a)$.
\hfill$\Box$
\end{enumerate}
\end{lemme}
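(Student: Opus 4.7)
The plan is to reduce the continuous-time age-structured branching process $Z^{i,\varepsilon}$ to its discrete-time Galton--Watson skeleton, and then transfer the classical Galton--Watson dichotomy back to the real-time dynamics.

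First I would identify the offspring distribution of the embedded Galton--Watson tree. A particle starting at age $0$ has lifetime $T$ with survival function $\exp(-\int_0^t d_i(\alpha)\,d\alpha)$ and, conditionally on $T=t$, produces a Poisson$\bigl(\int_0^t b_i(a)\,da\bigr)$ number of offspring (recorded at their birth times). Its offspring generating function is therefore
\[
G_i(z)=\int_0^{+\infty} d_i(t)\,e^{-\int_0^t d_i(\alpha)\,d\alpha}\,e^{(z-1)\int_0^t b_i(a)\,da}\,dt,
\]
which is exactly the analogue of (\ref{equationdeterminationsurvie}) with $(b_i,d_i)$ in place of $(b(y,\cdot),\widehat{d}(y,\cdot,x_0))$. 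Differentiating at $z=1$ yields mean offspring $G_i'(1)=\int_0^{+\infty} b_i(a)\,e^{-\int_0^a d_i(\alpha)\,d\alpha}\,da$, the threshold quantity appearing in the statement; the extinction probability of this Galton--Watson tree is the smallest fixed point $z_0^i\in[0,1]$ of $G_i$.

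In the sub/critical case ($G_i'(1)\le 1$) the genealogy is finite almost surely, so $Z^{i,\varepsilon}$ admits an almost-surely finite extinction time $S^i_0$ and an almost-surely finite maximum $M^i:=\sup_{t\ge 0}\langle Z^{i,\varepsilon}_t,1\rangle$ (dominated by the total progeny of the tree). Since $t_n\to\infty$ and $\varepsilon n\to\infty$, $\mathbb{P}(S^i_0\le t_n)\to 1$ and $\mathbb{P}(M^i<\varepsilon n)\to 1$, yielding (\ref{branching1}). In the supercritical case ($G_i'(1)>1$) I would split on $\{\text{extinction}\}$ versus its complement. Conditionally on extinction (probability $z_0^i$) the previous paragraph applies verbatim to the sub-tree, giving $\mathbb{P}(S^i_0\le t_n\wedge S^i_{\varepsilon n}\mid \text{extinction})\to 1$ and hence (\ref{branching3}). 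On the survival event I would invoke the Malthusian growth of age-structured branching processes (Athreya--Ney, or the Crump--Mode--Jagers framework): there exists a unique Malthusian parameter $\lambda>0$, the unique real root of the Lotka equation
\[
\int_0^{+\infty} b_i(a)\,e^{-\lambda a-\int_0^a d_i(\alpha)\,d\alpha}\,da=1,
\]
and $e^{-\lambda t}\langle Z^{i,\varepsilon}_t,1\rangle\to W$ almost surely with $W>0$ on survival. Since $t_n/\log n\to\infty$, one has $e^{\lambda t_n/2}\gg\varepsilon n$ eventually, so on the survival event $S^i_{\varepsilon n}<t_n$ and $S^i_0=+\infty$ with probability tending to one, proving (\ref{branching4}).

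The main technical point is the Malthusian step in the supercritical case: it requires integrable lifetimes (guaranteed by $d_i(a)\ge\underline{d}>0$ from Assumption~\ref{hypothesetaux}) so that the renewal/Lotka machinery applies, and a uniform enough control to exclude the degenerate scenario in which surviving paths reach size $\varepsilon n$ arbitrarily late; this is handled by the positivity and non-degeneracy of the limiting martingale $W$ on survival. The sub/critical step, by contrast, is comparatively routine once one observes that the total progeny, and hence $M^i$, is almost surely finite. Assumption~\ref{hypothesetaux} supplies all the boundedness of $b_i,d_i$ needed throughout.
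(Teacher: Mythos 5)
Your proof is correct and follows essentially the same route as the paper's: you identify the embedded Galton--Watson tree with Poissonian offspring, obtain $z_0^i$ as the smallest fixed point of the generating function analogous to (\ref{equationdeterminationsurvie}), and settle the supercritical case via the Malthusian almost-sure convergence of $e^{-\lambda t}\langle Z^{i,\varepsilon}_t,1\rangle$ to a limit that is positive on survival, combined with $t_n/\log n\to+\infty$. The only difference is cosmetic: the paper verifies Doney's second-moment and $x\log x$ conditions explicitly and derives (\ref{branching1}) and (\ref{branching3}) by dominated convergence using $t_n\wedge S^i_{\varepsilon n}\to+\infty$ a.s., whereas you argue via almost-sure finiteness of the total progeny on the extinction event and defer the moment checks to the cited references.
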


\noindent The probability that the mutant population gets extinct before having reached the mass $\varepsilon$, before the occurrence of a new mutation, and before the resident population deviates from its equilibrium is then lower bounded by:
\begin{align}
\mathbb{P}^n_{x_0, q_0^n,y}&\left(S^{n}_0<\tau\wedge
R^n_\varepsilon \wedge S^{n}_\varepsilon\right)
  \geq
\mathbb{P}^n_{x_0, q_0^n,y}\left(S^{n}_0<\frac{\rho}{nu_n} \wedge S^{n}_\varepsilon,\,\,\, \frac{\rho}{nu_n}<\tau\wedge R^n_\varepsilon\right) \nonumber\\
 \geq & \mathbb{P}^n_{x_0, q_0^n,y}\left(S^{2}_0<\frac{\rho}{nu_n} \wedge S^{2}_{\varepsilon n},\,\,\, \frac{\rho}{nu_n}<\tau\wedge R^n_\varepsilon\right)
 \geq   z_0(y,x_0)-C\varepsilon,\label{etape1demoinvasion}
\end{align}by (\ref{estimeetauetr}), by Lemma \ref{lemmebranchementlineaire} and by showing that when $\varepsilon\rightarrow 0$, $z_0^i(y,x)\rightarrow z_0(y,x)$. The continuity of the extinction probability when the birth and death rates are perturbed by $\varepsilon$ is obtained thanks to the implicit function theorem (see Lemma 6.2.5 of \cite{chithese}). Similarly, the probability that the mutant population reaches the mass $\varepsilon$ before extinction, before the occurrence of another mutant or before the deviation of the resident population to its equilibrium is lower bounded by:
\begin{align}
\mathbb{P}^n_{x_0, q_0^n,y} & \left(S^{n}_\varepsilon<\tau\wedge
R^n_\varepsilon \wedge S^{n}_0\right)
   \geq
\mathbb{P}^n_{x_0, q_0^n,x_0}\left(S^{n}_\varepsilon<\frac{\rho}{nu_n} \wedge S^{n}_0,\,\,\, \frac{\rho}{nu_n}<\tau\wedge R^n_\varepsilon\right) \nonumber\\
   \geq &
\mathbb{P}^n_{x_0, q_0^n,x_0}\left(S^{1}_{\varepsilon
n}<\frac{\rho}{nu_n} \wedge
S^{1}_0,\,\frac{\rho}{nu_n}<\tau\wedge R^n_\varepsilon\right)
  \geq  1-z_0(y,x_0)-C\varepsilon.\label{etape2demoinvasion}
\end{align}

\noindent If $z_0(y,x_0)=1$, we obtain by (\ref{etape1demoinvasion}) that:
\begin{align}
\mathbb{P}^n_{x_0, q_0^n,y}\left(S^{n}_0<\tau\wedge
R^n_\varepsilon \wedge S^{n}_\varepsilon\right)  = &
\mathbb{P}^n_{x_0, q_0^n,y}\left(\theta <\tau\wedge
R^n_\varepsilon
,\,V_0=x_0,\,D(Z^n_{\theta},\widehat{\xi}_{V_0})<\varepsilon\right)
\geq   1-C\varepsilon.\label{rajouttss4}
\end{align}Since this is valid for every $\varepsilon>0$, Proposition \ref{propextinction} is proved in this case.
\noindent If $z_0(y,x_0)\in ]0,1[$, we show by following the proof of Lemma 3
in \cite{champagnat3} that once the mutant population has reached
the mass $\varepsilon$, it replaces the resident population with
probability one. Indeed, the microscopic process in this case
follows its deterministic \textit{dimorphic} approximation. Since
$z_0(y,x_0)<1$, we necessarily have (Proposition
\ref{proprappelg}):
\begin{equation}\int_{\R_+} b(y,a)\exp\left(-\int_0^a \widehat{d}(y,\alpha,x_0)d\alpha\right)da>1,\quad \mbox{ and}\label{tsschapetape1}
\end{equation}by the assumption of non coexistence in the long term of two traits (Ass. \ref{dimstan}):
\begin{equation}\int_0^{+\infty} b(x_0,a)\exp\left(-\int_0^a \widehat{d}(x_0,\alpha,y)d\alpha\right)da<1.\label{tsschapetape2}
\end{equation}Then, the deterministic approximation $\xi$ converges to $\delta_{y}(dx) \widehat{m}(y,a)da$. Its neighborhood is reached by the microscopic process in
finite time with a probability that tends to 1 when $n\rightarrow
+\infty$. When this happens, we can approximate the dynamics of the
resident population by comparing it with a linear birth and death
age-structured process with birth rate $b(x_0,a)$ and death rate
$\widehat{d}(x_0,a,y)$ as we did for the mutant population after
its introduction in the system. Because of
(\ref{tsschapetape2}), these linear branching processes can be
chosen sub-critical and we can then show that the resident
population gets extinct with a probability that tends to 1 when
$n\rightarrow +\infty$. This gives us that:
\begin{align*}
\lim_{n\rightarrow+\infty}\mathbb{P}^n_{x_0, q_0^n,y}
\left(\theta\leq \tau,\,\, V=y\right)
 \geq &  1-z_0(y,x_0)-C\varepsilon.
\end{align*}
Since (\ref{etape1demoinvasion}) gave us that:
\begin{align*}
\lim_{n\rightarrow +\infty}\mathbb{P}^n_{x_0, q_0^n,y}
\left(\theta\leq \tau,\,\, V=x_0\right)
 \geq &  z_0(y,x_0)-C\varepsilon,
\end{align*}and since $\mathbb{P}^n_{x_0,q_0^n,y}
\left(\theta\leq \tau,\,\, V=x_0\right)+\mathbb{P}^n_{x_0,
q_0^n,y} \left(\theta\leq \tau,\,\, V=y\right)=1,$
Proposition \ref{propextinction} is proved.

\subsection{Proof of Lemma \ref{lemmebranchementlineaire}}

For the linear birth and death processes, there is no accumulation
of birth and death events and
$$\lim_{n\rightarrow+\infty }t_n\wedge S^i_{\varepsilon n}=+\infty,\,\,\mathbb{P}-p.s.$$By dominated convergence, the left hand side of (\ref{branching1}) and (\ref{branching3}) converges to $\mathbb{P}\left(S^i_0<+\infty\right)$, which solves an equation similar to (\ref{equationdeterminationsurvie}). The result is then obtained from Proposition \ref{proprappelg}.

\noindent Let us now consider (\ref{branching4}):
\begin{align}
\mathbb{P}\left(S^i_{\varepsilon n}\leq t_n\leq S^i_0\right)= &
\mathbb{P}\left(S^i_{\varepsilon n}\leq t_n\mbox{ and }
S^i_0=+\infty\right)+\mathbb{P}\left(S^i_{\varepsilon n}\leq
t_n\leq S^i_0<+\infty\right).\label{rajoutannexeproclin1}
\end{align}The second term of (\ref{rajoutannexeproclin1}) is upper-bounded by $\mathbb{P}\left(t_n\leq S^i_0<+\infty\right),$ which tends to 0 when $n\rightarrow +\infty$ by the choice of $t_n$. Let us now turn to the first term. Under the assumptions of Point 2, there exists a unique $\lambda>0$ such that:
$$\int_0^{+\infty}b_i(a)e^{-\lambda a-\int_0^a d_i(\alpha)d\alpha}da=1.$$
Let $Y_1$ and $W$ be the number of children and lifelength of an individual with birth rate $b_i(a)$ and death rate $d_i(a)$.
From the sufficient conditions of Doney \cite{doney}, when $t\rightarrow +\infty$,
$(e^{-\lambda t}\langle Z^{i,\varepsilon}_t,1\rangle)_{t\in \R_+}$
converges almost surely and in mean square to a strictly positive random
variable on $\{S^i_0=+\infty\}$ if $\mathbb{E}(Y_1^2)<+\infty$ and $\mathbb{E}\left(\left(\int_0^W e^{-\lambda a}b_i(a)da\right)\log\left(\int_0^W e^{-\lambda a}b_i(a)da \right)\right)<+\infty$. Recall Assumptions \ref{hypothesetaux}, and let $\varepsilon>0$ be sufficiently small so that $d_2$ remains bounded below by $\underline{d}>0$. Since the density of $W$ is $d_i(w)\exp(\int_0^w d_i(\alpha)d\alpha)$ and since conditionally to $W$, $Y_1$ is a Poisson random variable with parameter
$\int_0^W b_i(a)da$, we have
\begin{align*}
\mathbb{E}\left(Y_1^2\right)= & \int_0^{+\infty}\left(\int_0^w
b_i(a)da\right) \,d_i(w) e^{-\int_0^w d_i(a)da}dw \leq
\bar{b}(\bar{d}+\varepsilon\bar{U}+\bar{U}\widehat{M}_x)\int_0^{+\infty}
w e^{-\underline{d}w}dw<+\infty.
\end{align*}
The function $x\in \R^* \mapsto x\log(|x|)$ tends to 0 when $x\rightarrow 0$. On $\R_+$, this function first decreases from 0 to $-e^{-1}$ and then increases. Thus:
\begin{multline}
-\frac{1}{e}\leq \mathbb{E}\left(\left(\int_0^W e^{-\lambda a}b_i(a)da\right)\log\left(\int_0^W e^{-\lambda a}b_i(a)da \right)\right)
\leq  \mathbb{E}\left(\max(0,\bar{b} W \log(\bar{b}
W))\right)\\
\leq  \bar{b}(\bar{d}+\varepsilon \bar{U}+\bar{U}\widehat{M}_x)\int_0^{+\infty}w\left|\ln(\bar{b}w)\right|e^{-\underline{d}w}dw<+\infty.\nonumber
\end{multline}
Doney's sufficient conditions are then satisfied. On $\{S^i_0=+\infty\}$ we thus have \begin{equation} \lim_{t\rightarrow +\infty}\frac{\log \langle
Z^{i,\varepsilon}_t,1\rangle}{t}=\lambda>0.\label{limitelogannexe}
\end{equation}
Let us consider $n>1/\varepsilon$, so that $\log(\varepsilon n)>0$. Since $\lim_{n\rightarrow +\infty}S^i_{\varepsilon n}=+\infty$ almost surely, we have by (\ref{limitelogannexe}) that almost surely on $\{S^i_0=+\infty \}$
\begin{align}
\lim_{n\rightarrow +\infty}\frac{\log \varepsilon n
}{S^i_{\varepsilon n}}\geq \lim_{n\rightarrow +\infty}\frac{\log
\langle Z_{S^i_{\varepsilon n}-},1\rangle }{S^i_{\varepsilon
n}}=\lambda>0.
\end{align}
Then:
\begin{align}
\lim_{n\rightarrow +\infty}\mathbb{P}\left(S^i_{\varepsilon n}
\leq t_n,\quad S^i_0=+\infty\right)
=& \lim_{n\rightarrow +\infty} \mathbb{P}\left(\frac{S^i_{\varepsilon n}}{\log (\varepsilon n)} \leq \frac{t_n}{\log(\varepsilon n)},\quad S^i_0=+\infty\right)\nonumber\\
= &
\mathbb{P}\left(S^i_0=+\infty\right)=1-z^i_0(y,x),\label{limiteprobasvarepsilonnannexe}
\end{align}as by choice of $t_n$, $\lim_{n\rightarrow +\infty}t_n/(\log \varepsilon n)=+\infty$.

\section{Stability of the nontrivial equilibrium in Example 2}\label{annexestabiliteex2}

Let us prove that the nontrivial equilibrium (\ref{solutionstationnaireex2}) of Equations (\ref{eq1ex2})-(\ref{eq2ex2}) is asymptotically stable. In view of Theorem 4.12 in Webb \cite{webb} (p207), it is sufficient to study the
eigenvalues of the operator $B$ associated to the linearization of these equations in the neighborhood of this equilibrium. This leads us to study existence of non
trivial solutions for the following equations:
\begin{align}
 & \frac{d u}{d a}(a)=-\left(a\widehat{E}(x)+\lambda\right)u(a)-a\widehat{m}(x,a)E(x),\label{equationslinearisees1}\\
 & u(0)=\int_0^{+\infty}x(4-x)u(a)da,\label{equationslinearisees2}\\
 & E(x)=\int_0^{+\infty}U(x,x)(1+e^{-\alpha})u(\alpha)d\alpha.\label{equationslinearisees3}
\end{align}Equation (\ref{equationslinearisees1}) implies that:
\begin{align}
u(a)= & e^{-\widehat{E}(x)\frac{a^2}{2} -\lambda
a}\left(u(0)-\widehat{m}(x,0)E(x)\int_0^a \alpha e^{\lambda
\alpha}d\alpha\right).\label{exploitationeq1}
\end{align}Plugging this in Equation (\ref{equationslinearisees2}):
\begin{align}
u(0)=\int_0^{+\infty}x(4-x)e^{-\frac{\widehat{E}(x)a^2}{2}-\lambda
a}\left(u(0)-\widehat{m}(x,0)E(x)\int_0^a \alpha e^{\lambda
\alpha}d\alpha\right)da.\label{exploitationeq2}
\end{align}
\noindent If $\lambda=0$, then (\ref{exploitationeq2}) and the balance condition (\ref{balanceconditionex2}) lead us to:
\begin{align*}
0=\widehat{m}(x,0)E(x)x(4-x)\int_0^{+\infty}\frac{a^2}{2}e^{-\frac{\widehat{E}(x)a^2}{2}}
da,
\end{align*}which is never satisfied, since the right hand side is strictly positive. Hence $\lambda=0$ is not an eigenvalue of $B$.\\
\noindent If $\lambda\not= 0$, (\ref{exploitationeq2}) gives
\begin{align}
u(0)=-\frac{\widehat{m}(x,0)E(x)\int_0^{+\infty}e^{-\frac{\widehat{E}(x)a^2}{2}}\left(e^{-\lambda
a}+ \lambda a -1\right) da}{\lambda^2
\int_0^{+\infty}e^{-\frac{\widehat{E}(x)a^2}{2}}\left(1-e^{-\lambda
a}\right)da}.\label{u0}
\end{align}
\noindent If $\lambda>0$, then (\ref{u0}) has a sign opposite to $E(x)$ and we deduce from (\ref{exploitationeq1}) that so has $u(a)$. This contradicts (\ref{equationslinearisees3}) unless both members of the equation are zero. This is excluded since $u$ is an eigenfunction and hence is not constant equal to zero. Hence there is no eigenvalue on the nonnegative real axis.\\
\noindent Let us now consider $\lambda=\lambda_1+\textit{i}\lambda_2$ with $\lambda_1>0$. Since
$\int_0^{+\infty}ae^{-\widehat{E}(x)a^2/2}da=1/\widehat{E}(x)$,
\begin{align*}
u(0)= &
\frac{\widehat{m}(x,0)E(x)}{\lambda^2}\left(1-\frac{\lambda}{\widehat{E}(x)
\int_0^{+\infty}e^{-\frac{\widehat{E}(x)a^2}{2}}\left(1-e^{-\lambda
a}\right)da}\right),
\end{align*}and hence (\ref{exploitationeq1}) becomes
\begin{align}
u(a)= &
e^{-\widehat{E}(x)\frac{a^2}{2}}\frac{\widehat{m}(x,0)E(x)}{\lambda}
\left(-a+\frac{1}{\lambda}-\frac{e^{-\lambda
a}}{\widehat{E}(x)\int_0^{+\infty}e^{-\frac{\widehat{E}(x)\alpha^2}{2}}
\left(1-e^{-\lambda
\alpha}\right)d\alpha}\right).\label{ua}
\end{align}Using (\ref{ua}) in (\ref{equationslinearisees3})
\begin{multline}
0= \int_0^{+\infty}\frac{C\nu}{1+\nu}\widehat{m}(x,0)(1+e^{-a})
e^{-\frac{\widehat{E}(x)a^2}{2}}\\
\times \left(-a+\frac{1}{\lambda}-\frac{e^{-\lambda
a}}{\widehat{E}(x)\int_0^{+\infty}e^{-\frac{\widehat{E}(x)\alpha^2}{2}}\left(1-e^{-\lambda
\alpha}\right)d\alpha}\right)da-\lambda=:\Lambda(\lambda,x).\label{defgrandphivalppes}
\end{multline}
\unitlength=0.7cm
\begin{figure}[ht]
  \begin{center}
    \begin{picture}(10,7)
    \put(1,3){\line(1,0){8.5}}
    \put(1,0){\line(0,1){6}}
    \put(9,5.5){\vector(-1,0){8}}
    \put(9,0.5){\vector(0,1){5}}
    \put(1,0.5){\vector(1,0){8}}
    \put(1.1,2){\vector(0,-1){1.5}}
    \put(2.1,3){\vector(-1,-1){1}}
    \put(1.1,4){\vector(1,-1){1}}
    \put(1.1,5.5){\vector(0,-1){1.5}}
    \put(0.65,3.25){$0$}
    \put(9.5,5.5){$\Gamma$}
    \end{picture}
  \end{center}
  \caption{The Jordan curves used to study the zeros of $\lambda\mapsto \Lambda(\lambda,x)$}\label{figurejordan}
\end{figure}
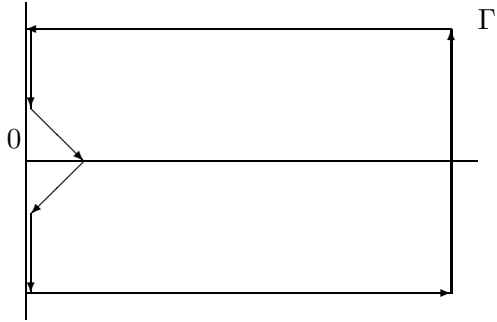
\noindent We wish to check that for any given $x$ in $[0,4]$, the complex
 function $\lambda\mapsto \Lambda(\lambda,x)$ does not have roots in the half plane
  $\mathcal{P}^+$ of complex numbers with nonnegative real part. Let $x\in [0,4]$ be fixed.
  When $|\lambda|\rightarrow +\infty$, the integral in (\ref{defgrandphivalppes}) converges
   to a finite value. Hence, the roots of $\Lambda(\lambda,x)$ in $\mathcal{P}^+$,
    if they exist, belong necessarily to a compact set. In order to show that there are
    no roots on compact sets of $\mathcal{P}^+$, we use the argument principle
     (see Henrici \cite{henrici}, Section 4.10). The function $\lambda\mapsto
      \Lambda(\lambda,x)$ is analytic on $\mathcal{P}^+\setminus \{0\}$.
       Let $\Gamma\,:\tau\in [0,1]\mapsto \Gamma(\tau)$ be a positively
       oriented Jordan curve of $ \mathcal{P}^+\setminus\{0\}$.
       Assume that there are $J$ zeros $(z_j)_{j\in \lbrac 1, J\rbrac}$
       with multiplicities $(m_j)_{j\in \lbrac 1, J\rbrac}$ contained in $\Gamma$. Then
\begin{align}
& \sum_{j=1}^J m_j  =\frac{1}{2\pi
i}\int_{\Gamma}\frac{\Lambda'(\lambda,x)}{\Lambda(\lambda,x)}d\lambda=
\frac{1}{2\pi i}\int_{\Lambda(\Gamma,x)}\frac{1}{w}dw=
n(\Lambda(\Gamma,x),0)\\
& \mbox{where }
n(\Lambda(\Gamma,x),0):=\frac{[\mbox{arg}(\lambda)]_{\Lambda(\Gamma,x)}}{2\pi}=
\frac{\phi(1)-\phi(0)}{2\pi},\label{argument}
\end{align}where $\phi(\tau)$ is a continuous version of the argument of
$\Lambda(\Gamma(\tau),x)$. Our purpose is to show that for chosen Jordan curves in $ \mathcal{P}^+\setminus\{0\}$, the right member of (\ref{argument}) is zero. This will entail that no zeros are contained in the chosen curves.
\noindent We consider the Jordan curves drawn in Figure \ref{figurejordan}. Since the computations can not been carried explicitly, we compute numerically $n(\Lambda(\Gamma,x),0)$ by following the algorithm proposed by Henrici (\cite{henrici}, end of Section 4.6). We then let $x$ vary along a grid between 0 and 4. The numerical results tell us that the variation of the argument $(\phi(1)-\phi(0))/(2\pi)$ remains zero for every $x\in [0,4]$. We have thus checked numerically that there is no complex solution $\lambda$ of (\ref{defgrandphivalppes}) with nonnegative real part, and the nontrivial equilibrium (\ref{solutionstationnaireex2}) is asymptotically stable.

{\footnotesize
\bibliographystyle{plain}
\bibliography{Biblioarticles,Bibliolivres,Biblioconf,Bibliounpub,Bibliophdthesis}
}
\end{document}